\documentclass[11pt,english,a4paper,leqno]{smfart}
\usepackage[english]{babel}
\usepackage{amssymb}
\marginparwidth=10 true mm
\oddsidemargin=0 true mm
\evensidemargin=0 true mm
\marginparsep=5 true mm
\topmargin=0 true mm
\headheight=8 true mm
\headsep=4 true mm
\topskip=0 true mm
\footskip=15 true mm

\setlength{\textwidth}{150 true mm}
\setlength{\textheight}{220 true mm}
\setlength{\hoffset}{8 true mm}
\setlength{\voffset}{2 true mm}

\usepackage{enumerate}
\usepackage{amsmath}

\selectlanguage{english}

\newtheorem{theorem}{Theorem}[section]
\newtheorem{lemma}[theorem]{Lemma}

\newtheorem{proposition}[theorem]{Proposition}

\newtheorem*{coro*}{Corollary}

{\theoremstyle{definition}}

{\theoremstyle{definition}\newtheorem{example}[theorem]{Example}}

\theoremstyle{definition}
\newtheorem{definition}[theorem]{Definition}
\newtheorem{question}[theorem]{Question}

\newtheorem*{fact*}{Fact}

\newtheorem*{claim*}{\rm Claim}

{\theoremstyle{definition}\newtheorem{remark}[theorem]{Remark}}

\def\D{\ensuremath{\mathbb D}}

\def\T{\ensuremath{\mathbb T}}

\def\Z{\ensuremath{\mathbb Z}}

\def\C{\ensuremath{\mathbb C}}

\def\N{\ensuremath{\mathbb N}}

\newcommand{\LL}[1]{L^{#1}}
\newcommand{\expo}[2]{{#1}^{#2}}
\newcommand{\kin}{k\in\Z}
\newcommand{\lp}[2]{\ell_{#1}(#2)}
\newcommand{\co}[1]{c_{0}(#1)}
\newcommand{\wn}[1]{w_{1}\dots w_{#1}}
\newcommand{\bw}{B_{w}}

\DeclareMathOperator{\vect}{span}

\newcommand{\hy}{hypercyclic}
\newcommand{\fhy}{frequently hypercyclic}
\newcommand{\ops}{operators}
\newcommand{\op}{operator}

\newcommand{\wrt}{with respect to}

\newcommand{\nz}{non-zero}

\newcommand{\inv}{invariant}

\newcommand{\erg}{ergodic}
\newcommand{\mea}{measure}
\newcommand{\prob}{probability}

\newcommand{\ifff}{if and only if}

\newcommand{\pss}[2]{\ensuremath{{\langle #1,#2\rangle}}}

\newcommand{\To}{\longrightarrow}

\newcommand{\bb}{\mathcal{B}}

\newcommand{\np}{(n+1)}
\newcommand{\en}[1]{\{0,\dots,#1\}}

\newcommand{\ba}[1]{\overline{#1}}

\begin{document}

\date{\today}

\title[New examples of universal hypercyclic operators]{Some new examples of universal hypercyclic operators in the sense of Glasner and Weiss}

\author{Sophie Grivaux}
\address{CNRS,
Laboratoire Ami\'enois de Math\'ematique Fondamentale et Appliqu\'ee, UMR 7352,
Universit\'e de Picardie Jules Verne,
33 rue Saint-Leu,
80039 Amiens Cedex 1,
France}
\email{sophie.grivaux@u-picardie.fr}

\begin{abstract}
A bounded operator $A$ on a real or complex  separable infinite-dimensional Banach space $Z$ is \emph{universal} in the sense of Glasner and Weiss if for every invertible \erg\ \mea-preserving transformation $T$ of a standard Lebesgue \prob\ space $(X,\bb,\mu )$, 
there exists an $A$-\inv\ \prob\ \mea\ $\nu $ on $Z$ with full support such that the two dynamical systems $(X,\bb,\mu ;T)$ and $(Z,\bb_{Z},\nu ;A)$ are isomorphic. We present a general and simple criterion for an operator to be universal, which allows us to characterize universal operators among unilateral or bilateral weighted shifts on $\ell_{p}$ or $c_{0}$, to show the existence of universal operators on a large class of Banach spaces, and to give a criterion for universality in terms of unimodular eigenvectors. We also obtain similar results for operators which are universal for all ergodic systems (not only for invertible ones), and study necessary conditions for an operator on a Hilbert space to be universal.
\end{abstract}

\keywords{Universal hypercyclic operators, ergodic theory of linear dynamical systems, frequently hypercyclic operators, isomorphisms of dynamical systems.}
\subjclass{47 A 16, 37 A 35, 47 A 35, 47 B 35, 47 B 37}
\thanks{This work was supported in part by the Labex CEMPI (ANR-11-LABX-0007-01)}

\maketitle
\section{Introduction and main results}\label{Sec1}
Let $G$ be a topological group, and $Z$ a real or complex  separable infinite-dimensional Banach space. We denote by $\mathcal{B}(Z)$ the set of bounded linear \ops\ on $Z$. Let $S:G\rightarrow {Z}$, $g\mapsto S_{g}$ be a representation of the group $G$ by bounded \ops\ on 
$Z$. If $\bb{}_{Z}$ denotes the Borel $\sigma $-field of $Z$, and $\nu $ 
is a Borel \prob\ \mea\ on $Z$ which is $S$-\inv\ (i.e.\ $\nu $ is $S_{g}$-\inv\ for every $g$ in $G$), then $S$ naturally defines a \prob-preserving action of the group $G$ on the 
\prob\ space $(Z,\bb_{Z},\nu )$. Recall that the measure $\nu $ is said to have full support if $\nu (U)>0$ for any non-empty open subset $U$ of $Z$.
\par\smallskip 
Glasner and Weiss introduced in the paper \cite{GW} the following notion 
of a universal representation:

\begin{definition}\label{DefGW}\cite{GW} 
The representation $S=(S_{g})_{g\in G}$ of the group $G$ on the Banach space $Z$ is said to be \emph{universal} if for every \erg\ \prob-preserving free action $T=(T_{g})_{g\in G}$ of $G$ on a standard Lebesgue \prob\ space $(X,\bb,\mu )$, there exists a Borel \prob\ \mea\ $\nu 
$ on $Z$ with full support which is $S$-\inv\  and such that  the two 
actions of $T$ and $S$ of $G$ on  $(X,\bb,\nu )$ and $(Z,\bb_{Z},\nu )$  respectively are isomorphic.
\end{definition}

Recall that $(T_{g})_{g\in G}$ is \emph{free} if for any element $g\in G$ different from the identity, $\mu (\{x\in X;\, T_{g}x=x\})=0$, and \emph{\erg}\ if the following holds true: if $A\in \mathcal{B}$ is such that $T_{g}^{-1}(A)=A$ for every $g\in G$, then $\mu (A)(1-\mu (A))=0$.

A universal representation of $G$ thus simultaneously models every 
possible free \erg\ action of $G$ on a \prob\ space. The existence of a universal representation is shown in \cite{GW} for a large class of groups $G$, including all countable discrete groups and all locally compact, second countable, compactly generated groups.
\par\smallskip 
When $G=\Z$, the main result of \cite{GW} states thus that there exists a 
bounded invertible \op\ $S$ on $H$ which is \emph{universal} in the following sense: for every invertible \erg\ \prob-preserving transformation $T$ of a standard Lebesgue \prob\ space $(X,\bb,\mu )$, 
there exists an $S$-\inv\ \prob\ \mea\ $\nu $ on $H$ with full support such that the two dynamical systems $(X,\bb,\mu ;T)$ and $(H,\bb_{H},\nu ;S)$ are isomorphic. Observe that any invertible ergodic \prob-preserving transformation $T$ of $(X, \mathcal{B},\mu)$ acts freely on $(X, \mathcal{B},\mu)$: for any $n\in\Z$, the set $\{x\in X \textrm;\ T^{n}x=x\}$ is $T$-invariant, and the ergodicity of $T$ implies that it is of $\mu$-measure zero. This definition of a universal operator is thus coherent with Definition \ref{DefGW}.
\par\smallskip 
Any of the systems $(H,\bb_{H},\nu ;S)$ is what is called a \emph{linear 
dynamical system}, i.e.\ a system given by the action of a bounded linear \op\ $A$ on  an infinite-dimensional separable Banach space $Z$. These systems can be studied from both the topological point of view and the ergodic point of view (when one endows the Banach space $Z$ with an $A$-\inv\ \prob\ \mea), and we refer the reader to the 
two books \cite{BM} and \cite{GP} for more on this particular class of dynamical systems.
\par\smallskip 
The result of \cite{GW}, when specialized to the case where $G=\Z$, thus 
says that \emph{any} invertible ergodic \prob-preserving dynamical system can be represented as a linear dynamical system where the underlying space is a Hilbert space, and, moreover, 
the same operator $S$ on $H$ can serve as a model for any such dynamical 
system. Given the apparent rigidity entailed by linearity, the universality result of 
\cite{GW} in the case where $G=\Z$
may seem  rather surprising. It is worth pointing out here that a topological version of this result had been obtained previously by Feldman in \cite{F}: there exists a bounded operator $A$ on
the Hilbert space $\lp{2}{\N}$ 
which has the following property: whenever $\varphi $ is a continuous self-map of a compact metrizable space $K$, there exists a compact subset $L$ of $\lp{2}{\N}$ which is $A$-invariant and an homeomorphism $\Phi :K\rightarrow L$ such that $\varphi =\Phi 
^{-1}\circ A\circ\Phi $. The proof of this topological result is rather straightforward, but it already gives a hint at the richness of the class of linear dynamical systems.
\par\smallskip 
A bounded operator $A$ acting on the Banach space $Z$ is said to be 
\emph{\hy}\ if it admits a vector $z\in Z$ whose orbit $\{A^{n}z;\ n\ge 0\}$ is dense in $Z$, and \emph{\fhy}\ if there exists a vector $z\in Z$ such that for every non-empty open subset $V$ of $Z$, the set $\{n\ge 0;\ A^{n}z\in V\}$ has positive lower density. If $A$ admits an invariant probability measure with full support with respect to which it is \erg, 
then Birkoff's \erg\ theorem is easily seen to imply that almost all vectors of $Z$  
are \fhy\ for $A$. Thus any universal operator is \fhy. Let us now say a few words about the construction of universal operators of \cite{GW}.

\par\smallskip 

The universal operators constructed in \cite{GW} are shift operators on 
certain weighted  $\ell_{p}$-spaces of sequences on $\Z$ for $1<p<+\infty $, or, equivalently, weighted shift operators on $\lp{p}{\Z}$. The proof uses in a crucial way an ergodic theorem for certain random walks of Jones, Rosenblatt, and Tempelman \cite{JRT}. This theorem states 
in particular that whenever $\eta $ is a symmetric strictly aperiodic probability measure on 
$\Z$, the following holds true: for any probability-preserving dynamical system $(X,\bb,\mu ;T)$ and any function $f\in L^{p}(X,\bb,\mu )$, $1<p<+\infty $, the powers $A_{\eta }^{n}f$ of the random walk operator on $\Z$ defined by \[
A_{\eta }f(x)=\sum_{k\in\Z}f(T^{k}x)\,\eta (k)\]
converge for almost every $x\in X$ to the projection $P_{\mathcal{J}}f$ of $f$ onto the subspace $\mathcal{J}$ of $L^{p}(X,\bb,\mu )$ consisting of $T$-invariant functions. This ergodic theorem can be applied for instance starting from the measure $\eta =(\delta _{-1}+\delta _{0}+\delta_{1} )/3$ on $\Z$. If $(p_{n})_{n\ge 1}$ is a sequence of positive real numbers such 
that $\sum_{n\ge 1}p_{n}=1$ and $\sup\, (p_{n}/p_{n+1})<+\infty $, the weights considered in \cite{GW} are defined by setting $w_{k}:=\sum_{n\ge 1}p_{n}\,\eta ^{*n}(k)$ for every $k\in\Z$. If $S$ is the shift operator defined on 
\[
\lp{p}{\Z,w}:=\{\xi =(\xi _{k})_{k\in\Z};\ \sum_{k\in \Z}|\xi 
_{k}|^{p}\,w_{k}<+\infty 
\}
\]
by setting $S\xi =(\xi_{k+1} )_{k\in\Z}$ for each $\xi \in \lp{p}{\Z,w}$, 
then $S$ is shown in \cite{GW} to be bounded, and the ergodic theorem of \cite{JRT} is then used to prove that for any function $f\in L^{2p}(X,\bb,\mu )$ the sequence $(f(T^{k}x))_{k\in\Z}$ belongs to $\lp{p}{\Z,w}$ for $\mu $-almost every $x\in X$. 
Setting
\begin{align*}
 \Phi _{f}:(X,\bb,\mu 
)&\longrightarrow(\lp{p}{\Z,w},\bb_{\lp{p}{\Z,w}},\nu _{f})\\
 x&\longmapsto (f(T^{k}x))_{k\in\Z}
\end{align*}
where $\nu _{f}$ is the measure on $\lp{p}{\Z,w}$ defined by $\nu 
_{f}(B)=\mu (\Phi _{f}^{-1}(B))$ for any Borel subset $B$ of $\lp{p}{\Z,w}$, it is easy to check that $\Phi 
_{f}$ intertwines the actions of $T$ on $(X,\bb,\mu )$ and of $S$ on 
$\lp{p}{\Z,w}$. The last (and most difficult) step of the proof of \cite{GW} is then to construct a function $f$ such that $\Phi _{f}$ is an isomorphism of dynamical systems and $\nu _{f}$ has full support.

\par\smallskip 

Our aim in this paper is to present an alternative construction of 
universal operators, which is elementary in the sense that it avoids the use of an ergodic theorem such as the one of \cite{JRT}. It is also more flexible than the construction of \cite{GW}, yields some rather simple criteria for universality, and allows us to show the existence of universal operators on a large class 
of Banach spaces. Moreover, this construction makes it possible to exhibit operators which are universal for \emph{all} ergodic dynamical systems, not only for invertible ones. As we will often need to make a distinction between these two notions, we introduce the following definition:

\begin{definition}\label{Def0}
 Let $A$ be a bounded operator on a real or complex Banach space $Z$.
 
\begin{enumerate}
 \item [$\bullet$] We say that $A$ is \emph{universal for invertible 
ergodic systems} if for every invertible ergodic dynamical system $(X,\bb,\mu ;T)$ on a standard Lebesgue probability space there exists a probability measure $\nu $ on $Z$ with full support which is $A$-invariant and such that the dynamical systems $(X,\bb,\mu ;T)$ and $(Z,\bb_{Z},\nu ;A)$ are isomorphic.

\item[$\bullet$] We say that $A$ is \emph{universal for ergodic systems} 
if the same property holds true  for all ergodic dynamical systems $(X,\bb,\mu ;T)$ on a standard Lebesgue probability space.
\end{enumerate}
\end{definition}
Universal operators in the sense of Glasner and Weiss are universal for 
invertible ergodic systems. When we use simply the term ``universal operator'' in the rest of the paper, we will mean an operator which is universal either for all ergodic systems or just for invertible ones.
Before stating our main results, we introduce the following intuitive notation: suppose that $A$ is a bounded operator on a real or complex separable Banach space $Z$, and suppose that $(z_{n})_{n\in\Z}$ is a sequence of vectors of $Z$ such that, for every $n\in\Z$, $Az_{n}=z_{n+1}$. We then write $z_{n}=A^{n}z_{0}$ for every $n\in\Z$.
\par\smallskip 
Our first result consists of a general and simple criterion for an 
operator to be universal for invertible ergodic systems.

\begin{theorem}\label{Theo1}
 Let $A$ be a bounded operator on a real or complex separable Banach space $Z$. Suppose that there exists a sequence $(z_{n})_{n\in\Z}$ of vectors of $Z$ such that, for every $n\in\Z$, $Az_{n}=z_{n+1}$, and such  that the following three properties hold true:
 
\begin{enumerate}
 \item [\emph{(a)}] the vector $z_{0}$ is bicyclic, i.e.\ 
$\overline{\vphantom{[}\vect}\,[A^{-n}z_{0};\ n\in\Z\,]$=Z;

\item[\emph{(b)}] there exists a finite subset $F$ of $\Z$ such that 
$\overline{\vphantom{[}\vect}\,[A^{-n}z_{0};\ n\in\Z\setminus F\,]\neq Z$;

\item[\emph{(c)}] the series $\sum_{n\in\Z}A^{-n}z_{0}$ is unconditionally 
convergent in $Z$.
\end{enumerate}

Then $A$ is universal for invertible ergodic systems.
\end{theorem}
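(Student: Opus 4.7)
The plan is, given an invertible ergodic system $(X,\bb,\mu;T)$, to associate to a bounded measurable scalar-valued function $f$ on $X$ the map
\[
\Phi_{f}:X\To Z,\qquad \Phi_{f}(x):=\sum_{k\in\Z}f(T^{k}x)\,A^{-k}z_{0},
\]
and to show that, for a carefully chosen $f$, $\Phi_{f}$ is a measure-theoretic isomorphism onto $(Z,\bb_{Z},\nu_{f};A)$ with $\nu_{f}:=(\Phi_{f})_{*}\mu$ of full support. Condition \emph{(c)} makes $\Phi_{f}$ available: since $\sum_{n\in\Z}A^{-n}z_{0}$ is unconditionally convergent, the multiplier map $(a_{n})\mapsto\sum_{n}a_{n}A^{-n}z_{0}$ is bounded from $\ell^{\infty}(\Z)$ to $Z$, so $\Phi_{f}$ is well-defined pointwise and measurable for any bounded $f$. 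The identity $\Phi_{f}\circ T=A\circ\Phi_{f}$ is a direct index shift $k\mapsto k-1$, which renders $\nu_{f}$ automatically $A$-invariant and $\Phi_{f}$ a semiconjugacy.

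Condition \emph{(b)} yields injectivity of $\Phi_{f}$ modulo null sets. After replacing $F$ by a minimal such finite set, the quotient $Z/W$, where $W:=\overline{\vphantom{[}\vect}\,[A^{-n}z_{0};\ n\in\Z\setminus F\,]$, is one-dimensional, and any non-zero $\ell\in W^{\perp}$ satisfies $\ell(A^{-k}z_{0})\neq 0$ for every $k\in F$ and $\ell(A^{-n}z_{0})=0$ for $n\notin F$. For each $m\in\Z$,
\[
\ell(A^{m}\Phi_{f}(x))=\sum_{k\in F}f(T^{m+k}x)\,\ell(A^{-k}z_{0}),
\]
which is a finitely supported convolution, in the variable $m$, of the sequence $n\mapsto f(T^{n}x)$. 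If $\Phi_{f}(x)=\Phi_{f}(y)$, then $n\mapsto f(T^{n}x)-f(T^{n}y)$ is a bounded solution of a fixed finite-order linear recurrence and thus lies in a fixed finite-dimensional almost-periodic subspace of $\ell^{\infty}(\Z)$. Choosing $f$ to be a Borel generator of $T$ (available since $(X,\bb,\mu)$ is standard Lebesgue), the $(T\times T)$-invariant set of pairs on which $\Phi_{f}$ fails to be injective reduces, by ergodicity of $T$ and the finite-dimensional constraint, to the diagonal modulo null sets.

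Full support of $\nu_{f}$ is enforced through condition \emph{(a)}. Fix a countable dense sequence $(y_{m})_{m\ge 1}$ in $Z$ and, using \emph{(a)}, finite sets $E_{m}\subset\Z$ and coefficients $(c_{m,k})_{k\in E_{m}}$ with $\|y_{m}-\sum_{k\in E_{m}}c_{m,k}A^{-k}z_{0}\|<1/m$. By Rokhlin's lemma, one can prescribe $f$ on finitely many orbit segments of positive measure; in particular, for every $\varepsilon>0$ one obtains a set of positive $\mu$-measure and an integer $n_{m}$ on which $f(T^{n_{m}+k}x)=c_{m,k}$ holds for every $k\in E_{m}$. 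Condition \emph{(c)} controls the remaining contribution: enlarging $E_{m}$ if necessary, unconditional convergence gives $\|\sum_{n\notin n_{m}+E_{m}}f(T^{n}x)A^{-n}z_{0}\|\le\varepsilon\|f\|_{\infty}$, hence $\|A^{-n_{m}}\Phi_{f}(x)-y_{m}\|<1/m+\varepsilon$ on that set. Since $\nu_{f}$ is $A$-invariant, every neighbourhood of every $y_{m}$ has positive $\nu_{f}$-measure, so $\nu_{f}$ charges every non-empty open subset of $Z$.

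The main obstacle is to combine in a single $f$ both the generating requirement of paragraph~2 and the prescribability requirement of paragraph~3. This is resolved by an inductive construction over a sequence of increasingly fine disjoint Rokhlin towers: on one countable collection of ``coding blocks'' one records a fixed Borel generator of $(X,\bb,\mu;T)$, on a disjoint collection of ``approximation blocks'' one prescribes the coefficients $c_{m,k}$ realising the approximations of the $y_{m}$'s, and $f$ is set to a negligible default value outside these blocks. Ergodicity of $T$ ensures that $\mu$-a.e.\ $x$ eventually enters every tower, while unconditional convergence from \emph{(c)} ensures that contributions from distant blocks accumulate into a bounded series, so that the resulting $f$ is bounded measurable and $\Phi_{f}$ enjoys both properties.
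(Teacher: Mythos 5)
Your opening steps match the paper's: you define $\Phi_f(x)=\sum_{k\in\Z}f(T^kx)A^{-k}z_0$, use (c) to make it well-defined and an $\ell^\infty\to Z$ multiplier, deduce the semiconjugacy $\Phi_f\circ T=A\circ\Phi_f$, and use Rokhlin towers to place approximate targets in the range. The observation that a minimal $F$ makes $Z/W$ one-dimensional is a nice (and correct) sharpening of (b); the paper only needs the existence of the functional $z_0^*$.

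The real divergence, and the real problem, is in your injectivity argument. You claim that $\Phi_f(x)=\Phi_f(y)$ forces $g(n):=f(T^nx)-f(T^ny)$ to be a bounded solution of a finite-order recurrence and hence to lie in a \emph{finite-dimensional} almost-periodic space. Neither half survives scrutiny. First, the recurrence $\sum_{k\in F}c_k g(m+k)=0$ is obtained by applying $\ell$ to $A^m\Phi_f(x)=A^m\Phi_f(y)$, which is only available for $m\ge 0$; since $A$ need not be injective, $\Phi_f(x)=\Phi_f(y)$ does not propagate backwards, so the constraint on $g$ is a \emph{one-sided} half-line recurrence, and the bounded two-sided sequences satisfying it form an infinite-dimensional set (the values $g(n)$ for $n$ far to the left are free). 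Second, even with a two-sided recurrence, your plan to kill the bad set of pairs invokes ergodicity of $T\times T$; but ergodicity of $T$ gives ergodicity of $T\times T$ only if $T$ is weakly mixing, which is not assumed. For an irrational rotation (ergodic, not weakly mixing), there are many non-trivial $T\times T$-invariant positive-measure sets, so the claimed reduction to the diagonal fails. Your final paragraph then splits $f$ between ``coding blocks'' carrying a Borel generator and ``approximation blocks'' carrying the prescribed coefficients; but once you overwrite the generator on a positive-measure set, it is no longer a generator, and the injectivity argument you appealed to in paragraph~2 is undermined further rather than repaired.

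The paper's proof deliberately avoids trying to prove injectivity of any $\Phi_f$ with $f\in L^\infty$; indeed the limit $f$ it constructs lies in $L^2$ but \emph{not} in $L^\infty$, so $\Phi_f$ in the naive sense does not even exist (you implicitly assume you can get away with a bounded $f$). Instead, the paper builds $(f_n)\subset L^\infty$ inductively, using Lemma \ref{Lemme0} and Steps 3--4 to perturb scalar levels so that the scalar function $F(x)=\sum_{p\in F}c_p f(T^px)=\langle z_0^*,\Phi(x)\rangle$ takes values in disjoint Borel sets $\Gamma_{i,0},\Gamma_{i,1}$ according to whether $x\in Q_i$ or not, for a dense family $(Q_i)$ of Borel sets; this separation, not pointwise injectivity, is what produces the isomorphism. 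That mechanism has no counterpart in your sketch, and this is the step you would need to supply.
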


There is a very similar criterion which implies that an operator is 
universal for all ergodic systems:

\begin{theorem}\label{Theo1bis}
Let $A$ be a bounded operator on a real or complex separable Banach 
space $Z$. If $A$ satisfies the assumptions of Theorem \ref{Theo1}, and if moreover the sequence $(z_{n})_{n\in \Z}$ is such that $A^{r}z_{0}=0$ for some $r\in\Z$ (or, equivalently, such that $z_{0}=0$), $A$ is universal for ergodic systems.
\end{theorem}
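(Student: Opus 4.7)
The plan is to repeat verbatim the construction used to prove Theorem~\ref{Theo1} and to observe that the one point at which invertibility of $T$ was invoked is bypassed as soon as $z_0=0$. Modulo reindexing the sequence $(z_n)_{n\in\Z}$, I may assume outright that $z_0=0$; since $Az_n=z_{n+1}$, this forces $z_n=0$ for every $n\geq 0$, so the relevant part of the bilateral orbit reduces to the one-sided sequence $(z_{-n})_{n\geq 1}$. Conditions (a), (b), (c) translate at once into statements about this sequence: $\overline{\vphantom{[}\vect}\,[z_{-n};\ n\geq 1]=Z$, some finite set of indices can be removed from this family without covering $Z$, and $\sum_{n\geq 1}z_{-n}$ converges unconditionally in $Z$.

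Next, given an ergodic (possibly non-invertible) probability-preserving system $(X,\bb,\mu;T)$ on a standard Lebesgue space, I would introduce the candidate intertwining map
$$\Phi_{f}(x)=\sum_{n\geq 0}f(T^{n}x)\,z_{-n-1},\qquad x\in X,$$
for a bounded observable $f:X\to\R$. The unconditional convergence of $\sum_{n\geq 1}z_{-n}$ combined with the boundedness of $f$ yields $\mu$-almost-everywhere convergence of $\Phi_f(x)$ in $Z$. A direct computation based on $Az_{-n-1}=z_{-n}$, a shift of the summation index, and the assumption $z_0=0$, gives
$$A\Phi_f(x)=f(x)\,z_0+\sum_{n\geq 1}f(T^n x)\,z_{-n}=\Phi_f(Tx)$$
for $\mu$-almost every $x\in X$. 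The key point is that $\Phi_f$ depends only on the forward iterates $T^nx$, $n\geq 0$, so the construction makes sense even when $T$ is not invertible.

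The remainder of the argument is the delicate part and is identical to that of Theorem~\ref{Theo1}: one has to choose $f$ so that $\Phi_f$ is essentially injective, so that the pushforward measure $\nu_f:=(\Phi_f)_{*}\mu$ has full support on $Z$, and so that the two systems $(X,\bb,\mu;T)$ and $(Z,\bb_{Z},\nu_f;A)$ become isomorphic via $\Phi_f$. Condition (a) is used to guarantee full support of $\nu_f$, condition (b) to separate points (hence isomorphism of the systems), and condition (c) to control the analytic convergence estimates. None of these steps uses $T^{-1}$, so they transfer without change to the ergodic non-invertible setting. The main obstacle — or rather the one subtle observation — is to formulate $\Phi_f$ so that the boundary term produced by the shift is killed by the assumption $z_0=0$, which is precisely what the reindexing step at the beginning achieves.
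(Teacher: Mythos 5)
Your reformulation of the intertwining map as a one-sided sum $\Phi_f(x)=\sum_{n\geq 0}f(T^nx)\,z_{-n-1}$ using the vanishing $z_0=0$ is exactly the right move, and it matches the paper's approach (which writes $\Phi_f(x)=\sum_{k\geq 1}f(T^kx)\,A^{-k+r}z_0$). But your claim that ``the remainder of the argument is identical to that of Theorem \ref{Theo1}'' and that ``none of these steps uses $T^{-1}$'' is not accurate, and this is where the actual work of the paper's proof lies. The inductive construction of the functions $f_n$ in the proof of Theorem \ref{Theo1} relies at every stage on Rokhlin towers built from two-sided iterates: a set $E$ with $T^kE$, $|k|\leq N$, pairwise disjoint, and a set $E'$ with $T^kE'$, $|k|\leq M$, pairwise disjoint, and the auxiliary functions $g_{n+1}$ and $h_{n+1}$ are defined piecewise on these forward \emph{and} backward images of $E$ and $E'$. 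For a non-invertible measure-preserving $T$, the forward images $T^kE$ (with $k>0$) are problematic: they need not have the same measure as $E$, and the tower structure collapses. The paper fixes this by invoking the Rokhlin lemma for aperiodic (not necessarily invertible) systems, replacing $T^kE$ by preimages $T^{-k}E$ for $0\leq k\leq N$ (which are always measurable and measure-preserving), re-indexing the definition of $g_{n+1}$ so that the values $a_k^{(n+1)}$ sit on $T^{-N+k}E$, similarly re-indexing the second tower over $E'$, and redefining $G_{n+1}^{(n+1)}=T^{-N}E$. These are not cosmetic changes: without them the sets $B_{n+1}$, the verifications of (4c) and (6c), and the disjointness arguments in Steps 1 and 3 would not go through for a non-invertible $T$. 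So while your outline captures the governing idea, the assertion that the rest of the proof ``transfers without change'' is a genuine gap that needs to be filled in by the re-indexing of the Rokhlin towers.
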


We have already mentioned that a universal operator is necessarily frequently hypercyclic. An operator satisfying the assumptions of either Theorem \ref{Theo1} or Theorem \ref{Theo1bis} is easily seen to satisfy the Frequent Hypercyclicity Criterion of \cite{BoGr} (see also \cite{BM} or \cite{GP}), and so is in particular frequently hypercyclic and chaotic.
\par\smallskip
The proofs of Theorems \ref{Theo1} and \ref{Theo1bis} largely rely 
on the ideas of \cite{GW}, but some extra work is needed, in particular in order to cope with the condition (b) in both theorems. The proofs would be simpler if we assumed that $F=\{0\}$ 
(which is what happens in some of the examples, in particular in those of 
\cite{GW}), but the generality of assumption (b) is needed in several of the examples given in Section \ref{Sec4}.

\par\smallskip 

The proofs of Theorems \ref{Theo1} and 
\ref{Theo1bis} are presented in Section \ref{Sec2}, as well as two 
generalizations of these results (Theorems \ref{Theo1-3} and 
\ref{Theo1-4}) in which assumption (a) is relaxed. The next two sections are devoted to applications and examples. In Section \ref{Sec3}, we characterize universal operators (both for ergodic systems and for invertible ergodic systems) among unilateral or bilateral weighted backward shifts on the spaces $\lp{p}{\N}$, $1\le p<+\infty $ or $\co{\N}$. Recall that if $(e_{n})_{n\ge 0}$ denotes the canonical basis of $\lp{p}{\N}$, 
or $\co{\N}$, and $(w_{n})_{n\ge 1}$ is a bounded sequence of non-zero 
complex numbers, the weighted backward shift $\bw $ is defined
 on $\lp{p}{\N}$ or $\co{\N}$ by setting $\bw e_{0}=0$ and $\bw 
 e_{n}=w_{n}e_{n-1}$ for every $n\ge 1$. In the same way, if $(f_{n})
 _{n\in \Z}$ is the canonical basis of $\lp{p}{\Z}$ or $\co{\Z}$, and 
$(w_{n})_{n\in\Z}$ is again a bounded sequence of non-zero complex 
numbers, the bilateral weighted shift $S_{w}$ on $\lp{p}{\Z}$ or 
$\co{\Z}$ is defined by setting $S_{w}e_{n}=w_{n}e_{n-1}$ for every 
$n\in\Z$. Here is the characterization of universal weighted shifts which 
can be obtained thanks to Theorems \ref{Theo1} and \ref{Theo1bis}:

\begin{theorem}\label{Theo2}
 With the notations above, the unilateral backward weighted shift $\bw $ 
is universal for (invertible) ergodic systems on $\lp{p}{\N}$, $1\le p<+\infty $, if 
and only if the series $$\sum_{n\ge 1}\frac{1}{|w_{1}\dots w_{n}|^{p}}$$ is 
convergent. It is universal for  (invertible) ergodic systems on $\co{\N}$ if and only 
if 
$|w_{1}\dots w_{n}|\longrightarrow 0$ as $n\longrightarrow +\infty $. 
\par\smallskip
In the same way, the bilateral backward weighted shift $S_{w}$ is 
universal for (invertible)  ergodic systems on $\lp{p}{\Z}$, $1\le p<+\infty $, if and 
only if the series $$\sum_{n\ge 1}\frac{1}{|w_{1}\dots w_{n}|^{p}}+\sum_{n\ge 1}{|w_{0}\dots w_{-(n-1)}|^{p}}$$ is convergent. 
It is universal for (invertible) ergodic systems on $\co{\Z}$ if and only if 
$|w_{1}\dots w_{n}|\longrightarrow 0$ and
$|w_{0}\dots w_{-(n-1)}|\longrightarrow +\infty $ as $n\longrightarrow +\infty $.
\end{theorem}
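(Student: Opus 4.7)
My plan is to prove the ``if'' direction by applying Theorems \ref{Theo1} and \ref{Theo1bis} to an explicit orbit sequence built from the canonical basis, and the ``only if'' direction by invoking the fact (noted just after Theorem \ref{Theo1bis}) that any universal operator is \fhy\ together with the existing characterizations of frequent hypercyclicity for weighted backward shifts on these spaces.

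For the sufficiency in the case of $\bw $ on $\lp{p}{\N}$, I assume $\sum_{n\ge 1}|\wn{n}|^{-p}<+\infty $ and set $z_{0}=e_{0}$, $z_{n}=\bw ^{n}e_{0}=0$ for every $n\ge 1$, and $z_{-n}=(\wn{n})^{-1}e_{n}$ for every $n\ge 1$ (the natural formal preimages of $e_{0}$ under $\bw $). By construction $\bw z_{n}=z_{n+1}$ for every $n\in\Z$, and the identity $z_{1}=0$ places us in the setting of Theorem \ref{Theo1bis}. Condition (a) holds because the family $\{A^{-n}z_{0}\}_{n\in\Z }$ contains non-zero scalar multiples of each $e_{n}$; condition (b) holds with $F=\{0\}$ since, once $z_{0}=e_{0}$ is removed, the remaining vectors are all supported on $\{n\ge 1\}$ and hence have proper closed linear span; condition (c), the unconditional convergence of $e_{0}+\sum_{n\ge 1}(\wn{n})^{-1}e_{n}$ in $\lp{p}{\N}$, is precisely the hypothesis since $(e_{n})$ is an unconditional basis. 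The $\co{\N}$ case is entirely parallel, the relevant unconditional convergence in $c_{0}$ being equivalent to the stated condition on the sequence $(\wn{n})$.

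For the bilateral shift $S_{w}$ on $\lp{p}{\Z}$ or $\co{\Z}$ I make the analogous choice $z_{0}=f_{0}$, $z_{n}=(w_{0}w_{-1}\dots w_{-(n-1)})f_{-n}$ for $n\ge 1$ and $z_{-n}=(\wn{n})^{-1}f_{n}$ for $n\ge 1$; since no $z_{r}$ vanishes, only Theorem \ref{Theo1} applies (giving universality for invertible ergodic systems). Conditions (a) and (b) (with $F=\{0\}$) are checked as before from the fact that the family $\{A^{-n}z_{0}\}_{n\in\Z }$ contains non-zero scalar multiples of every $f_{n}$, while condition (c) now demands the unconditional convergence of both tails $\sum_{n\ge 1}z_{n}$ and $\sum_{n\ge 1}z_{-n}$, which yields exactly the two-sided summability or decay condition appearing in the statement.

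For the ``only if'' direction, a universal operator is \fhy, and the four conditions appearing in the statement are precisely the known characterizations of frequent hypercyclicity for weighted backward shifts on $\lp{p}{\N}$, $\co{\N}$, $\lp{p}{\Z}$ and $\co{\Z}$, due to Bayart--Grivaux in the $\ell_{p}$ setting and to later refinements in the $c_{0}$ setting. The main technical subtlety I expect in my plan is the verification of condition (c) in the bilateral $c_{0}(\Z)$ case, where unconditional convergence has to be handled through norm convergence of the partial sums and decay of the terms along the unconditional basis on both sides simultaneously.
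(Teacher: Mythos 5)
Your sufficiency argument matches the paper exactly: the same choice of orbit $(z_{n})_{n\in\Z}$, the same verification of (a), (b) with $F=\{0\}$, and the same reduction of (c) to the stated summability/decay conditions via the unconditionality of the canonical basis.

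The gap is in your necessity argument for the $c_{0}$ cases. You assert that the four conditions in the statement are ``precisely the known characterizations of frequent hypercyclicity'' for weighted backward shifts on each of the four spaces, but this is false on $\co{\N}$ and $\co{\Z}$. As the paper itself points out (citing \cite{BG2} and \cite{BR}), the condition $|w_{1}\cdots w_{n}|\longrightarrow\infty$ does \emph{not} characterize frequent hypercyclicity on $\co{\N}$: there exist frequently hypercyclic weighted backward shifts on $\co{\N}$ whose weight products do not tend to infinity. So the implication ``universal $\Rightarrow$ frequently hypercyclic $\Rightarrow$ stated weight condition'' breaks at the second arrow in the $c_{0}$ setting. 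The paper gets around this by using a stronger consequence of universality: a universal operator is strongly mixing with respect to an invariant measure with full support, hence topologically mixing, and topological mixing of $\bw$ on $\co{\N}$ does force $|w_{1}\cdots w_{n}|\longrightarrow\infty$ (one applies the mixing property to the pair of open sets $B(0,\varepsilon)$ and $B(e_{0},1/2)$, which gives $\varepsilon\,|w_{1}\cdots w_{n}|>1/2$ for all $n$ large). The same remark applies to the $\co{\Z}$ case. For $\lp{p}{\N}$ and $\lp{p}{\Z}$, $1\le p<\infty$, your appeal to the Bayart--Ruzsa characterization of frequent hypercyclicity is exactly what the paper does, so that part is fine.
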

This result shows in particular the existence of universal operators for 
ergodic systems living on any of the spaces $\lp{p}{\N}$, $1\le 
p<+\infty $, or $\co{\N}$. The existence of universal operators for invertible \erg\ systems on $\lp{p}{\N}$, $1< p<+\infty $,  is already proved in \cite{GW}. A natural question, asked in \cite{GW}, is to 
determine which Banach (or Fr\'{e}chet) spaces support a universal 
operator. As a universal operator is necessarily frequently hypercyclic, 
and some Banach spaces (like the hereditarily indecomposable spaces, for 
instance), do not support frequently hypercyclic operators, it follows 
that 
not all Banach spaces support a universal operator. But, as a consequence 
of Theorem \ref{Theo1}, we obtain the existence of such operators on 
Banach spaces with a sufficiently rich structure.

\begin{theorem}\label{Theo4}
Let $Z$ be a separable infinite dimensional Banach space containing  a complemented 
copy of a space  with a sub-symmetric basis. Then $Z$ supports an operator 
which is universal for all ergodic systems.
\end{theorem}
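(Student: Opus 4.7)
The plan is to apply Theorem~\ref{Theo1bis} by explicitly constructing, on $Z$, a bounded operator $A$ and a bi-infinite sequence $(z_n)_{n\in\Z}$ with $z_0=0$ satisfying the three hypotheses of that theorem. Write $Z=Y\oplus W$, where $Y$ carries a normalized $1$-sub-symmetric basis $(y_n)_{n\ge0}$ and $W=\ker P$ for some bounded projection $P:Z\to Y$. Since $Z$ is separable, so is $W$; choose a sequence $(w_k)_{k\ge0}$ in $W$ with dense linear span and with $\|w_k\|$ decaying sufficiently rapidly (to be specified below).

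The operator $A$ is built as a perturbation of a weighted backward shift on $Y$, with the perturbations feeding the vectors of $W$ into the backward orbit. Fix $c>1$ and a rapidly increasing sequence of positive integers $n_0<n_1<\cdots$. Declare
\[
A\,y_n=c\,y_{n-1}\ \text{ for } n\ge1,\ n\notin\{n_k\},\qquad A\,y_{n_k}=c\,y_{n_k-1}+c^{n_k}w_k,
\]
with $A\,y_0=0$, and extend by $A\equiv 0$ on $W$. Boundedness of $A$ on $Z$ will follow from sub-symmetry of $(y_n)$ together with an estimate of the form $c^{n_k}\|w_k\|\le r^{k}$ for some $r\in(0,1)$: the "unperturbed" part is a bounded weighted shift by sub-symmetry, while the extra contribution is controlled by $\sum_k|a_{n_k}|\,c^{n_k}\|w_k\|$, which by unconditionality of the basis is dominated by $\bigl(\sup_k|a_{n_k}|\bigr)\sum r^k\lesssim\|\sum a_n y_n\|$.

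Setting $z_n=0$ for $n\ge0$ and unwinding the relation $A\,z_{-n}=z_{-(n-1)}$ gives explicitly
\[
z_{-n}=\tfrac{1}{c^{n-1}}\,y_{n-1}\quad\text{for } n\notin\{n_k\},\qquad z_{-n_k}=\tfrac{1}{c^{n_k-1}}\,y_{n_k-1}+w_k.
\]
Condition~(c) of Theorem~\ref{Theo1bis} is immediate, as $\sum_{n\ge1}\|z_{-n}\|\le\sum_{n}c^{-(n-1)}+\sum_k\|w_k\|<\infty$, so the series converges absolutely, hence unconditionally. Condition~(b) holds with $F=\{-1\}$: removing $z_{-1}=y_0$ leaves a closed linear span projecting into $\overline{\vect}(y_j:j\ge1)$, a proper subspace of $Y$, and therefore of $Z$.

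The heart of the argument, and the step requiring the most care, is verification of~(a): $\overline{\vect}(z_{-n}:n\ge1)=Z$. One easily obtains every $y_j$ with $j\notin\{n_k-1\}$ from the unperturbed vectors, so modulo~$Y$ the task is to recover each $w\in W$. Using the rapid growth of $(n_k)$ and sub-symmetry of $(y_n)$, one shows that appropriately rescaled finite combinations of the mixed vectors $(z_{-n_k})$ produce approximations of $(0,w)$: the densely chosen $(w_k)$ allow the $W$-components to approximate any target $w$, while the comparable rescaling of the $Y$-components—controlled via the sub-symmetric norm and the sparsity of the $n_k$—forces the $Y$-components to vanish in the limit. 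The main obstacle in carrying this out is striking the balance between the decay of $\|w_k\|$ (needed for boundedness of $A$) and the "usability" of the $w_k$ in approximating arbitrary $w\in W$ without blowing up the associated $Y$-parts; this is precisely where the sub-symmetric structure of $(y_n)$ is indispensable, since it allows one to control mixed sums $\|\sum\lambda_k y_{n_k-1}\|$ by the sub-symmetric norm of the coefficient sequence $(\lambda_k)$, independently of which subset $(n_k)$ is chosen.
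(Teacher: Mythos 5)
Your verification of condition (a) is the step that fails, and the failure is not a matter of missing details but a genuine obstruction: with the decay $c^{n_k}\|w_k\|\le r^k$ that you impose to make $A$ bounded, the closed linear span of $\{z_{-n}:n\ge1\}$ is automatically a \emph{proper} subspace of $Z$. Indeed, fix any nonzero $\psi\in W^{*}$, set $\phi(y_j)=0$ for $j\notin\{n_k-1:k\ge0\}$, $\phi(y_{n_k-1})=-c^{\,n_k-1}\psi(w_k)$, and $\phi|_{W}=\psi$. Since $|\phi(y_{n_k-1})|\le\|\psi\|\,c^{\,n_k-1}\|w_k\|\le\|\psi\|\,r^k/c$ is absolutely summable and the biorthogonal functionals $y_j^{*}$ of the normalized basis of $Y$ are uniformly bounded, the series $\sum_j\phi(y_j)\,y_j^{*}$ converges and defines a bounded extension of $\phi|_{Y}$; hence $\phi\in Z^{*}$ is a nonzero functional, and a direct check gives $\phi(z_{-n})=0$ for every $n\ge1$ (it kills the unperturbed $z_{-n}$ because the coefficient vanishes, and it kills $z_{-n_k}=c^{-(n_k-1)}y_{n_k-1}+w_k$ by construction). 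So (a) fails. The point is structural: in your orbit, $w_k$ never appears alone but only tethered to $y_{n_k-1}/c^{\,n_k-1}$, and the smallness of $\|w_k\|$ that boundedness of $A$ forces is exactly the smallness that keeps the tether from ever being broken in the closure. Loosening the decay of $\|w_k\|$ cannot help, since $\sum_k c^{n_k}\|w_k\|<\infty$ is already required for $A$ to be bounded, and that alone makes $(\phi(y_{n_k-1}))_k$ summable and the obstructing $\phi$ bounded.

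The paper avoids this by a genuinely different construction. Instead of feeding the complement vectors into the orbit as perturbations of the backward shift, it interleaves a biorthogonal system $(y_k,y_k^{*})$ of the complement $Y$ with the sub-symmetric basis $(e_n)$ to produce a single bounded biorthogonal system $(u_n,u_n^{*})$ of $Z$, with $u_{n_k}=y_k$ and $u_n=e_{n-k-1}$ between consecutive $n_k$'s, and then defines $A$ as a weighted backward shift with respect to \emph{this} system. Then $z_{-n}$ is a nonzero scalar multiple of $u_n$, so the orbit span contains every $u_n$ outright, and (a) is immediate. The work shifts to choosing the weights and the gaps $n_{k+1}-n_k$ so that $A$ is bounded (this is where sub-symmetry is used) and so that $\sum_n\|u_n\|/|\omega_1\cdots\omega_n|<\infty$; those two requirements do not compete with (a). If you want to keep your perturbation idea, you must arrange for each $w_k$ (not merely a mixed vector) to lie, up to a scalar, in the backward orbit; the interleaving achieves exactly that. (A small side remark: in (b) you should take $F=\{1\}$, not $F=\{-1\}$, since in the notation of Theorem \ref{Theo1} the excluded vector is $A^{-n}z_0=z_{-n}$.)
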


This result implies for example that any separable Banach space containing a 
complemented copy of one of the spaces $\lp{p}{\N}$, $1\le p<+\infty $,
or $c_{0}(\N)$, supports a universal operator. This is the case for all spaces $\LL{p}(\Omega ,\mu )$, where $(\Omega ,\mu )$ is a $\sigma $-finite measured space.

\par\smallskip 

If $A$ is a bounded operator on a complex infinite dimensional separable Hilbert 
space $H$, it is known (see \cite{BG1}, or \cite[Ch.\,5]{BM}) that $A$
admits an invariant measure with respect to which it is ergodic, and which 
additionally has full support and admits a second order moment, if and 
only if its unimodular eigenvectors are \emph{perfectly spanning}: this 
means that  there exists a continuous probability measure $\sigma $ on the 
unit circle $\T=\{\lambda \in\C,\ |\lambda |=1\}$ such that for any Borel 
subset $B$ of $\T$ with $\sigma (B)=1$,
\[
\overline{\vphantom{(}\textrm{span}}\, \bigl[\,\ker(A-\lambda 
),\ \lambda \in B \bigr]=H.
\]
 In this case, the unimodular eigenvectors of $A$ are said to be 
\emph{$\sigma $-spanning}. An eigenvectorfield $E$ of $A$ is a map $E:\T\longrightarrow Z$ such that $A\,E(\lambda )=\lambda \,E(\lambda )$ for every $\lambda \in\T$. We will often be dealing in the rest of the paper with eigenvectorfields $E$ belonging to $\LL{2}(\T,\sigma  ;Z)$ where $\sigma  $ is a certain probability measure on $\T$: this means that $E:\T\longrightarrow Z$ is $\sigma  $-measurable, with 
$$\displaystyle\int_{\T}||E(\lambda 
)||^{2}d\sigma  (\lambda )<+\infty $$ and $A\,E(\lambda )=\lambda \,E(\lambda )$ 
$\sigma   $-almost everywhere. When we write simply that $E$ belongs to $\LL{2}(\T ;Z)$, this means that $\sigma  $ is assumed to be the normalized Lebesgue measure $d\lambda $ on $\T$.
\par\smallskip 
As a universal operator on $H$ is necessarily ergodic with respect to a 
certain invariant measure with full support (although this measure is not 
required to have a second-order moment), it is natural to look for 
conditions involving the unimodular eigenvectors of the operator $A$ which 
imply its universality. This is done in Section \ref{Sec4}, where we prove the two following general results:

\begin{theorem}\label{Theo5}
 Let $A$ be a bounded operator on a complex Banach space $Z$. Suppose that there exists an eigenvectorfield $E\in\LL{2}(\T;Z)$ for $A$ such that

\begin{enumerate}
 \item [\emph{(i)}] whenever $B$ is a Borel subset of $\T$ of full Lebesgue measure, $\overline{\vphantom{(}\textrm{span}}\,[E(\lambda ),
 \ \lambda \in B]=Z$;
 
 \item[\emph{(ii)}] there exists a non-zero functional $z_{0}^{*}\in 
Z^{*}$ and a trigonometric polynomial $p$ such that 
$\pss{z_{0}^{*}}{E(\lambda )}=p(\lambda )$ almost
everywhere on $\T$;

\item[\emph{(iii)}] if we set for every $n\in \Z$ 
$$\widehat{E}(n)=\displaystyle\int_{\T}\lambda ^{-n}E(\lambda )\,d\lambda 
,$$ then the series $\sum_{n\in\Z}\widehat{E}(n)$ is unconditionally 
convergent.
\end{enumerate}
Then the operator $A$ is universal for invertible ergodic systems. If 
moreover $\widehat{E}(-r)=0$ for some integer $r\in\Z$ (so that $\widehat{E}(-n)=0$ for every $n\ge 
r$), then $A$ is universal for ergodic systems.
\end{theorem}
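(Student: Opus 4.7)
The plan is to reduce Theorem \ref{Theo5} directly to Theorems \ref{Theo1} and \ref{Theo1bis} by building an orbit sequence out of the Fourier coefficients of $E$. Specifically, for every $n\in\Z$ I would set
$$z_{n}:=\widehat{E}(-n)=\int_{\T}\lambda^{n}\,E(\lambda)\,d\lambda.$$
Since $AE(\lambda)=\lambda E(\lambda)$ Lebesgue-almost everywhere, pulling $A$ inside the Bochner integral gives $A z_{n}=\int_{\T}\lambda^{n+1}E(\lambda)\,d\lambda=z_{n+1}$, so $(z_{n})_{n\in\Z}$ is a genuine two-sided $A$-orbit with $A^{n}z_{0}=z_{n}$ and, in particular, $A^{-n}z_{0}=\widehat{E}(n)$ for every $n\in\Z$. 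Condition (c) of Theorem \ref{Theo1} is then nothing other than hypothesis (iii).

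For condition (a), I would take an arbitrary $\phi\in Z^{*}$ annihilating every $\widehat{E}(n)$. The scalar function $\lambda\mapsto\phi(E(\lambda))$ lies in $L^{2}(\T)$, and its $n$-th Fourier coefficient is exactly $\phi(\widehat{E}(n))=0$ for every $n\in\Z$; therefore $\phi\circ E$ vanishes outside a Lebesgue-null set, and hypothesis (i) then forces $\phi=0$. Hence $\overline{\vect}\,[A^{-n}z_{0};\,n\in\Z]=Z$, which is bicyclicity.

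For condition (b), write the trigonometric polynomial from (ii) as $p(\lambda)=\sum_{k\in F}c_{k}\lambda^{k}$ with $F\subset\Z$ finite. Note that $p\not\equiv 0$: otherwise $z_{0}^{*}$ would vanish on $E(\lambda)$ almost everywhere, and the argument just given for (a) would force $z_{0}^{*}=0$, contradicting (ii). For every $n\in\Z\setminus F$ one computes
$$\langle z_{0}^{*},A^{-n}z_{0}\rangle=\langle z_{0}^{*},\widehat{E}(n)\rangle=\int_{\T}\lambda^{-n}p(\lambda)\,d\lambda=0,$$
so the non-zero functional $z_{0}^{*}$ annihilates every $A^{-n}z_{0}$ with $n\notin F$, and $\overline{\vect}\,[A^{-n}z_{0};\,n\in\Z\setminus F]\neq Z$. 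Theorem \ref{Theo1} then delivers universality of $A$ for invertible ergodic systems.

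For the final assertion, the extra hypothesis $\widehat{E}(-r)=0$ translates to $z_{r}=A^{r}z_{0}=0$, so Theorem \ref{Theo1bis} upgrades the conclusion to universality for all ergodic systems; the asserted propagation $\widehat{E}(-n)=0$ for every $n\geq r$ is then automatic from $z_{n+1}=Az_{n}$. The one step where care is required is the verification of (a), since one has to translate vanishing of the coefficients $\phi(\widehat{E}(n))$ into pointwise vanishing of $\phi\circ E$ and then invoke the full-measure spanning condition (i); once the Fourier sign conventions are pinned down, the remainder of the proof is essentially bookkeeping.
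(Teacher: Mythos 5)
Your proposal follows essentially the same route as the paper: set $z_{n}=\widehat{E}(-n)$, verify (a) via the Fourier coefficients of $\langle z^{*},E(\cdot)\rangle$ and hypothesis (i), verify (b) with $z_{0}^{*}$ and the finite support of $\widehat{p}$, identify (c) with (iii), and then invoke Theorems \ref{Theo1} and \ref{Theo1bis}. Your explicit check that $p\not\equiv 0$ (which the paper leaves implicit) and your direct computation with $A^{-n}z_{0}=\widehat{E}(n)$ rather than the paper's auxiliary polynomial $q$ are minor cosmetic improvements, not a different argument.
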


Remark that if $E$ is sufficiently smooth (of class $\mathcal{C}^{1}$ for 
instance), then the sequence of Fourier coefficients 
$(\widehat{E}(n))_{n\in\Z}$ goes to zero sufficiently rapidly for the 
series $\sum_{n\in\Z}||\widehat{E}(n)||$ to be convergent. Hence the 
assumption (iii) is automatically satisfied in this case. If $E$ is 
analytic in a neighborhood of $\T$, it can be renormalized in such a way that assumption (ii) is also 
satisfied, and this yields

\begin{theorem}\label{Theo6}
 Suppose that $A\in \bb(Z)$ admits an eigenvectorfield $E$ which is 
analytic 
in a neighborhood of $\T$, and that 
$\overline{\vphantom{(}\emph{span}}\,\bigl[E(\lambda );\ \lambda \in\T 
\bigr]=Z$. Then $A$ is universal for invertible ergodic systems. If $E$ is 
analytic in a neighborhood of the closed unit disk 
$\overline{\vphantom{(}\D}$ and 
$\overline{\vphantom{(}\emph{span}}\,\bigl[E(\lambda );\ \lambda \in\T 
\bigr]=Z$, then $A$ is universal for ergodic systems.
\end{theorem}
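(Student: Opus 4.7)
The plan is to deduce Theorem \ref{Theo6} from Theorem \ref{Theo5} by renormalizing the given eigenvectorfield $E$ so that hypotheses (ii) and (iii) are met while preserving (i). First, since $\overline{\vect}\,[E(\lambda);\ \lambda\in\T]=Z$, I pick a non-zero $z_{0}^{*}\in Z^{*}$ such that the scalar function $f(\lambda) := \pss{z_{0}^{*}}{E(\lambda)}$ is not identically zero. Because $E$ is analytic on an open neighborhood of the compact set $\T$, it in fact extends analytically to an open annulus $\{\lambda;\ r < |\lambda| < R\}$ with $r<1<R$, and so does $f$. Hence $f$ has only finitely many zeros $\lambda_1,\dots,\lambda_k$ on $\T$, with multiplicities $m_1,\dots,m_k$; setting $P(\lambda) := \prod_{i=1}^{k}(\lambda - \lambda_i)^{m_i}$, the quotient $g := P/f$ is analytic and non-vanishing on some open neighborhood of $\T$.

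Let $\tilde E(\lambda) := g(\lambda) E(\lambda)$. Since $g$ is scalar, $A \tilde E(\lambda) = \lambda \tilde E(\lambda)$, so $\tilde E$ is still an eigenvectorfield, and it is analytic in a neighborhood of $\T$. To verify (i) of Theorem \ref{Theo5}, suppose $z^*\in Z^*$ annihilates $\tilde E(\lambda)$ on a full-measure Borel set $B\subseteq\T$: then the analytic function $\lambda \mapsto \pss{z^*}{\tilde E(\lambda)}$ vanishes on a set with accumulation points in its domain, hence vanishes on all of $\T$; as $g$ has only finitely many zeros on $\T$, continuity forces $\pss{z^*}{E(\lambda)}=0$ for \emph{every} $\lambda\in\T$, and the spanning assumption on $E$ forces $z^*=0$. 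Hypothesis (ii) is immediate, since $\pss{z_{0}^{*}}{\tilde E(\lambda)} = g(\lambda) f(\lambda) = P(\lambda)$ is a trigonometric polynomial. For (iii), analyticity of $\tilde E$ on an annulus around $\T$ implies that its Fourier coefficients $\widehat{\tilde E}(n)$ decay exponentially in $|n|$, so $\sum_{n\in\Z}||\widehat{\tilde E}(n)||<+\infty$ and the series $\sum_{n\in\Z}\widehat{\tilde E}(n)$ converges unconditionally.

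For the second half, assume $E$ extends analytically to an open neighborhood $U$ of $\overline{\D}$. Then $f$ does too, and $f$ has finitely many zeros $\mu_1,\dots,\mu_\ell$ in $\overline{\D}$, counted with multiplicity. I redefine $P(\lambda) := \prod_{i=1}^{\ell}(\lambda - \mu_i)$, which cancels \emph{all} zeros of $f$ in $\overline{\D}$; after shrinking $U$ to exclude any remaining zeros of $f$, the function $g = P/f$ is analytic on a neighborhood of $\overline{\D}$, and hence so is $\tilde E = gE$. The analytic extension of $\tilde E$ inside $\D$ forces $\widehat{\tilde E}(n) = 0$ for every $n\le -1$; in particular $\widehat{\tilde E}(-1)=0$, so the last clause of Theorem \ref{Theo5} yields universality of $A$ for all ergodic systems.

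The only technical care needed is to track the zeros of $f$ correctly --- first those on $\T$ (for the first part), then those inside $\overline{\D}$ (for the second) --- so that $gE$ remains an analytic eigenvectorfield whose pairing with $z_0^*$ is a polynomial; beyond this bookkeeping, everything is a direct application of Theorem \ref{Theo5}.
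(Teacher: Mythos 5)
Your proposal is correct and follows essentially the same approach as the paper: pick a nonzero functional $z_0^*$, observe that $\varphi(\lambda)=\pss{z_0^*}{E(\lambda)}$ is a nonzero analytic function with finitely many zeros on $\T$ (respectively on $\overline{\D}$), factor out the polynomial with those zeros to renormalize $E$ into $\tilde E = (P/\varphi)\,E$, and verify that $\tilde E$ satisfies (i)--(iii) of Theorem~\ref{Theo5}, with exponential decay of $\widehat{\tilde E}(n)$ giving (iii) in the first case and vanishing of $\widehat{\tilde E}(n)$ for $n<0$ giving the last clause of Theorem~\ref{Theo5} in the second. The only cosmetic slip is the phrase ``as $g$ has only finitely many zeros on $\T$'': by construction $g$ is non-vanishing on a neighborhood of $\T$, which makes the verification of (i) immediate; this does not affect the argument.
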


Several applications of these two theorems are given in Section 
\ref{Sec4}, in particular to adjoints of multipliers on $H^{2}(\D)$ 
(Example \ref{Example2}) and to the rather unexpected case of a 
Kalish-type operator on $\LL{2}(\T)$ (Example \ref{Example4}).
\par\smallskip 
In Section \ref{Sec5} we try to exhibit some necessary conditions for an 
operator to be universal. In this generality, and with the present 
definition of universality, this seems to be delicate. But if we restrict 
ourselves to operators acting on a Hilbert space, and if we additionally 
require in the definition of universality that the measure $\nu $ admits a 
moment of order $2$ (see Definition \ref{Definition10}), then we obtain:

\begin{theorem}{\label{Theo7}}
 Suppose that $A\in \bb(H)$ is universal for ergodic systems in the 
modified 
sense presented above. Then the unimodular eigenvalues of $A$ form a 
subset of $\T$ of Lebesgue measure 1.
\end{theorem}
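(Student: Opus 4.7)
The plan is to argue by contradiction, combining the universality assumption with the characterization of ergodic invariant measures on a Hilbert space in terms of perfectly spanning unimodular eigenvectors from \cite{BG1} (see also \cite[Ch.~5]{BM}). Let $B_{0}:=\sigma_{p}(A)\cap\T$ denote the set of unimodular eigenvalues of $A$, and suppose for contradiction that $\T\setminus B_{0}$ has strictly positive Lebesgue measure.

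First I use universality to build an $A$-invariant measure $\nu$ with a prescribed spectral profile concentrated on the ``bad'' set $B_{0}^{c}$. Put $\sigma:=\mathbf{1}_{\T\setminus B_{0}}\,d\lambda/\mathrm{Leb}(\T\setminus B_{0})$, a continuous probability measure on $\T$ entirely supported on the non-eigenvalues of $A$. Standard ergodic theory produces an ergodic measure-preserving system $(X,\bb,\mu;T)$---for instance a Gaussian automorphism---whose Koopman operator $U_{T}$ on $L^{2}(X,\mu)$ has first-chaos spectral measure exactly $\sigma$. By the universality of $A$ in the modified sense, there is then an $A$-invariant probability measure $\nu$ on $H$ with full support and a second order moment such that $(X,\bb,\mu;T)$ and $(H,\bb_{H},\nu;A)$ are isomorphic as measure-preserving systems.

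The technical core of the proof is to transfer spectral information across this isomorphism back to the operator $A$. Since $\nu$ has a second moment, the map $J\colon H\to L^{2}(H,\nu)$ defined by $J(h)(z)=\pss{z}{h}$ is well-defined, bounded, and injective (injectivity following from full support of $\nu$), and satisfies $U_{A}J(h)=J(A^{*}h)$. Consequently $H_{1}:=\overline{J(H)}\subseteq L^{2}(H,\nu)$ is $U_{A}$-invariant, and $U_{A}|_{H_{1}}$ identifies naturally with an isometric extension of $A^{*}$ acting on a suitable completion of $H$. The fact that $\nu$ is obtained, via universality, as a model for a Gaussian $T$ whose Koopman operator has first chaos exactly $\sigma$ allows one to read off---through a refined form of the Bayart--Grivaux characterization---that the unimodular eigenvectors of $A$ are perfectly spanning with respect to a continuous probability measure $\sigma'$ that is absolutely continuous with respect to $\sigma$. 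In particular $\sigma'(B_{0})=0$. But perfectly spanning with respect to $\sigma'$ demands $\overline{\mathrm{span}}\,\bigl[\ker(A-\lambda),\ \lambda\in B\bigr]=H$ for every Borel set $B\subseteq\T$ with $\sigma'(B)=1$; taking any such $B$ inside $\T\setminus B_{0}$ annihilates every kernel on the right and forces the span to be $\{0\}$, a contradiction which yields $\mathrm{Leb}(\T\setminus B_{0})=0$ as required.

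The delicate step---and what I expect to be the main obstacle---is precisely the one where the first-chaos spectral type of the modeled Gaussian $T$ is pushed back to the first-chaos spectral type of $U_{A}|_{H_{1}}$. A dynamical isomorphism is merely measure-theoretic and does not a priori respect the linear Hilbert space structure underlying the chaos decomposition, so linking $\sigma$ to the ``perfectly spanning'' measure $\sigma'$ for $A$ requires simultaneously leveraging the second order moment of $\nu$, the explicit Fock structure of the Gaussian test system, and the eigenvectorfield realization of invariant measures behind \cite{BG1}.
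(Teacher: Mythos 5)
Your proposed strategy is genuinely different from the paper's, but it has a gap that you yourself flag and underestimate, and another that you do not mention. The paper does \emph{not} try to transfer first-chaos structure across the measure-theoretic isomorphism $\Phi$. Instead it picks as test system the doubling map $T\colon x\mapsto 2x\bmod 1$ on $[0,1]$, which has the special feature that correlations $\mathcal{C}_{n}(f,g)$ decay exponentially for \emph{every} $f\in L^{2}([0,1])$ as long as $g\in\mathcal{C}^{1}([0,1])$. Because this holds for all of $L^{2}$, it is automatically transported by any measure-preserving isomorphism $\Phi$: one takes $G=g\circ\Phi^{-1}$ with $g$ smooth, projects $G$ onto the $U_{A}$-invariant subspace $\mathcal{E}=\overline{\mathrm{span}}\,[\pss{e_{l}}{\cdot}]\subseteq L^{2}(H,\nu)$ of linear functionals, and uses $U_{A}(\mathcal{E})\subseteq\mathcal{E}$ to get $\mathcal{C}_{n}(F,G)=\mathcal{C}_{n}(F,F)$ for the projection $F$. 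Since $F\in L^{2}$, the decay of $\mathcal{C}_{n}(F,G)$ is inherited, the spectral measure associated to $F$ has analytic density, and an analytic density not identically zero cannot vanish on a set of positive Lebesgue measure.

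Your approach instead chooses $T$ to be a Gaussian automorphism with first-chaos spectral measure $\sigma$ concentrated on $\T\setminus B_{0}$ and hopes to conclude that the perfectly spanning measure $\sigma'$ for $A$ is absolutely continuous with respect to $\sigma$. This fails in two places. First, as you acknowledge, the isomorphism is purely measure-theoretic and does not carry first-chaos structure to the linear-functional subspace $\mathcal{E}$ of $L^{2}(H,\nu)$; what unitary equivalence of Koopman operators gives you is at best control of the spectral type on all of $L^{2}_{0}$, which for a Gaussian automorphism is $\sum_{n}c_{n}\sigma^{*n}$, not $\sigma$ itself. Second, and more seriously, even granting $\sigma'\ll\sum_{n}c_{n}\sigma^{*n}$, you cannot conclude $\sigma'(B_{0})=0$: the convolution powers $\sigma^{*n}$ of a measure supported on $\T\setminus B_{0}$ will in general charge $B_{0}$, since $\T\setminus B_{0}$ has no reason to be stable under addition. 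So the contradiction you invoke does not come. A further signal that the route cannot work as stated is that your test system is invertible, so a successful argument of this kind would prove Theorem \ref{Theo7} under the weaker hypothesis of universality for \emph{invertible} ergodic systems, which the paper explicitly leaves as an open problem (Question \ref{Question12}), precisely because invertible systems do not enjoy the one-sided exponential decay of correlations that the doubling map provides.
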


It is a rather puzzling fact that we do not know whether Theorem 
\ref{Theo7} can be extended to operators which are universal for 
invertible ergodic systems only. This point is discussed in Section 
\ref{Sec5}, as well as some open questions. 

\par\bigskip
\textbf{Acknowledgement:} I am grateful to the referee for his/her careful reading of the manuscript and his/her suggestions which enabled me to clarify some points in the presentation of the text and to simplify some arguments.

\section{A criterion for universality: proofs of Theorems \ref{Theo1} and 
\ref{Theo1bis}}\label{Sec2}

The proofs of Theorems \ref{Theo1} and \ref{Theo1bis} being very similar, 
we concentrate on the proof of Theorem \ref{Theo1}, and will indicate briefly afterwards the modifications needed for proving Theorem \ref{Theo1bis}.

\subsection{General pattern of the proof of Theorem 
\ref{Theo1}}\label{SousSec2a}
Let $A$ be a bounded operator on the infinite-dimensional 
separable Banach space $Z$ satisfying the assumptions of Theorem \ref{Theo1}. We will suppose in the rest of the proof that $Z$ is a complex Banach space, but the proof obviously holds true for real spaces as well. Let $(X,\bb,\mu ;T)$ be an invertible ergodic dynamical system on a standard probability space. For any complex-valued function $f\in\LL{\infty }(X,\bb,\mu )$ let $\Phi _{f}$ be the map from 
$(X,\bb,\mu  )$ into $Z$ defined by setting \[
\Phi _{f}(x)=\sum_{\kin}f(\expo{T}{k}x)\,\expo{A}{-k}z_{0}.
\]
Since $f$  is essentially bounded and the series 
$\sum_{\kin}\expo{A}{-k}z_{0}$ is unconditionally convergent, $\Phi _{f}(x)$ is well-defined for $\mu $-almost every $x\in 
X$. Also we have 
\[
\Phi_{f}(Tx)=\sum_{\kin}f(\expo{T}{k+1}x)\,\expo{A}{-k}z_{0}
=\sum_{\kin}f(\expo{T}{k}x)\,\expo{
A}{-k+1}z_{0}=A\,\Phi _{f}(x)
\]
since $\expo{A}{-k+1}z_{0}=z_{-k+1}=Az_{-k}=A.\expo{A}{-k}z_{0}$.
\par\smallskip 
If we denote by $\bb_{Z}$ the Borel $\sigma $-field of $Z$,
and by
$\nu _{f}$ the Borel probability measure on $Z$ which is 
the image of $\mu $ under the map $\Phi _{f}$ (i.e.\ $\nu _{f}(B)=\mu (\Phi 
_{f}^{-1}(B))$ for every Borel subset $B$ of $Z$), it then follows that $\Phi _{f}:(X,\bb,\mu ;T)\longrightarrow(Z,\bb_{Z},\nu _{f};A)$ is a factor map. This first argument is rather similar to the one 
employed in \cite{GW}, but the map $\Phi _{f}$ is defined differently in 
\cite{GW}, and for any $f\in\LL{4}(X,\bb,\mu )$, thanks to the ergodic theorem of \cite{JRT}. The goal in \cite{GW} is then to construct a function $f\in\LL{4}(X,\bb,\mu )$ as a limit of certain finitely-valued functions $f_{n}\in\LL{\infty}(X,\bb,\mu )$, in such a way that $\Phi 
_{f}$ becomes an isomorphism of dynamical systems and the measure $\nu 
_{f}$ has full support. As $\Phi _{f}$ is not necessarily well-defined here when $f\in\LL{4}(X,\bb,\mu )$ (or $f\in\LL{p}(X,\bb,\mu )$, $1<p<+\infty$), we will construct, in the same spirit as in \cite {GW}, a sequence of finitely-valued functions $f_{n}\in\LL{\infty }(X,\bb,\mu )$ such that $\Phi _{f_{n}}$ converges in $\LL{2}(X,\bb,\mu;Z )$ to a certain function $\Phi \in\LL{2}(X,\bb,\mu ;Z)$ 
which will be an isomorphism between the two systems $(X,\bb,\mu ;T)$ and 
$(Z,\bb_{Z},\nu ;A)$, where  
$\nu $ is the image of $\mu $ under the map $\Phi $.
\par\smallskip 
Let $(Q_{j})_{j\ge 0}$ be a sequence of Borel subsets of $X$ which is dense in 
$(\bb,\mu )$ (i.e.\ for every $\varepsilon>0$  and every $B\in\bb$, there exists a $j\ge 0$ such that $\mu 
(Q_{j}\vartriangle B )<\varepsilon $) with $Q_{0}=X$. Moreover, we suppose that for any $i\ge 0$, the set $J_{i}=\{j\ge i\; ;\; Q_{j}=Q_{i}\}$ is infinite. Since the span of the vectors $\expo{A}{k}z_{0}$, $\kin$, is dense in $Z$, there exists a sequence $(u_{n})_{n\ge 1}$ of vectors of $Z$ of the form \[
u_{n}=\sum_{|k|\le d_{n}}\expo{a}{(n)}_{k}\,\expo{A}{-k}z_{0},\quad 
\expo{a}{(n)}_{k}\in\C,\quad \max_{|k|\le d_{n}}|\expo{a}{(n)}_{k}|>0
\]
which is dense in $Z$. Let, for each $n\ge 1$, $r_{n}$ be a positive 
number such that  the open balls $U_n=B(u_{n},r_{n})$ centered at $u_{n}$ and of radius $r_{n}$ form a basis of the topology of $Z$. We set 
$U_{0}=Z$. 
Lastly, by assumption (b) of Theorem \ref{Theo1} there exists a finite 
subset $F$ of $\Z$ and a non-zero functional $z_{0}^{*}\in Z^{*}$ such that $\pss{z_{0}^{*}}{\expo{A}{-n}z_{0}}=0$ for all $n\in\Z\setminus F$ and  $\pss{z_{0}^{*}}{\expo{A}{-n}z_{0}}\neq 0$ for all $n\in F$ (we may have to modify the initial set $F$ to obtain this property). 
If we replace the vector $z_{0}$ by the vector $z_{0}'=A^{-p}z_{0}$, then $\pss{z_{0}^{*}}{\expo{A}{-n}z_{0}'}\neq 0$ if and only if $n\in F'=F-p$. If we choose $p\in F$, $\pss{z_{0}^{*}}{z_{0}'}\neq 0$.
So, replacing $z_{0}$ by $z_{0}'$ and $F$ by $F'$,
we can suppose that $0\in F$ and that $z_{0}$ and $z_{0}^{*}$ are such that
$c_{n}=\pss{z_{0}^{*}}{\expo{A}{-n}z_{0}}$ is non-zero \ifff\ 
$n\in F$. We let $d=\max |F|$.

\subsection{Construction of the functions $f_{n}$, $n\ge 
0$}\label{SousSec2b}
We are now ready to start the construction of the functions $f_{n}$. This 
construction is very much inspired from that of \cite{GW}, but many technical details need to be adjusted to the present situation. For any $z\in\C$ and $r>0$, $D(z,r)$ denotes the open disk centered at 
$z$ of radius $r$.
\par\smallskip 
We construct by induction
\begin{enumerate}
 \item [--] a sequence $(f_{n})_{n\ge 0}$ of functions of 
$\LL{\infty }(X,\bb,\mu ;\C  )$;

\item[--] sequences $(\alpha  
_{n})_{n\ge 0}$, $(\beta  _{n})_{n\ge 0}$, $(\gamma  _{n})_{n\ge 0}$, $(\delta  _{n})_{n\ge 0}$, and $(\eta  _{n})_{n\ge 0}$ of positive real numbers, decreasing to zero extremely fast;

\item[--] for each $n\ge 0$, families $(D_{i,\,0}^{(n)})_{0\le i\le n}$ and $(D_{i,\,1}^{(n)})_{0\le i\le n}$, 
$(E_{i,\,0}^{(n)})_{0\le i\le n}$ and $(E_{i,\,1}^{(n)})_{0\le i\le n}$, $(F_{i,\,0}^{(n)})_{0\le i\le n}$ and $(F_{i,\,1}^{(n)})_{0\le i\le n}$ of Borel subsets of $\C$;

\item[--] for each $n\ge 0$, families $(G_{i}^{(n)})_{0\le i\le n}$ and 
$(H_{i}^{(n)})_{0\le i\le 
n}$ 
of $\mu $-measurable subsets of $X$, and two measurable subsets $B_{n}$ and 
$C_{n}$ of $X$
\end{enumerate}

such that
\begin{enumerate}
\par\smallskip
\item [(1)] the sets $E_{i\,, 0}^{(n)}$ and $E_{i\,, 1}^{(n)}$, $0\le i\le n$,
are finite, 
and 
the range of $f_{n}$ is equal to
\[ 
\left(\bigcup_{i=0}^{n}E_{i,\,0}^{(n)}\right)\cup\left(\bigcup_{i=0}^{n}E_{
i,\,1}^{(n)}
\right)\!;\label{Property1}
\]
moreover, for every $i\in\{0,\dots, n\}$, $E_{i,\,0}^{(n)}\cap 
E_{i,\,1}^{(n)}=\varnothing$;

\par\smallskip
\item[(2)] we have \label{Property2}
\par\smallskip
\begin{enumerate}
\par\smallskip
\item[(2a)] \label{Property2a}$\mu (C_{n})>1-\eta _{n}$;
\par\smallskip
\item[(2b)] \label{Property2b}
if we set, for every $i\in\{0,\dots, n\}$

\begin{align*}
 D_{i,\,0}^{(n)}&=\Bigl\{\sum_{p\in F}c_{p}\,f_{n}(\expo{T}{p}x);\ 
  x\in 
C_{n}\ \textrm{and}\ f_{n}(x)\in E_{i,\,0}^{(n)}\Bigr\}
\end{align*}
and
\begin{align*}
 D_{i,\,1}^{(n)}&=\Bigl\{\sum_{p\in F}c_{p}\,f_{n}(\expo{T}{p}x);\ 
  x\in 
C_{n}\ \textrm{and}\ f_{n}(x)\in E_{i,\,1}^{(n)}\Bigr\},
\end{align*}

then 
$D_{i,\,0}^{(n)}\cap D_{i,\,1}^{(n)}=\varnothing$;

\par\smallskip
\item[(2c)] \label{Property2c}
if we set, for every $i\in\{0,\dots, n\}$
\[
F_{i,\,0}^{(n)}=D_{i,\, 0}^{(n)}+D(0,\beta _{n})\quad\textrm{and}\quad 
F_{i,\,1}^{(n)}=D_{i,\, 1}^{(n)}+D(0,\beta _{n}),
\]
then $F_{i,\,0}^{(n)}\cap F_{i,\,1}^{(n)}=\varnothing$;
\end{enumerate}
\par\smallskip
\item[(3)] \label{Property3} for every $i\in\{0,\dots, n\}$,
\par\smallskip
\begin{enumerate}

\par\smallskip
\item[(3a)] \label{Property3a}$\mu (H_{i}^{(n)})<\alpha _{i}(1-2^{-n})$;
\par\smallskip
\item[(3b)] \label{Property3b}$H_{i}^{(n-1)}\subseteq H_{i}^{(n)}$ for 
every 
$i\in\{0,\dots, n-1\}$;
\par\smallskip
\item[(3c)] \label{Property3c} for every $x\in Q_{i}\setminus H_{i}^{(n)}$, 
$f_{n}(x)\in E_{i,\,0}^{(n)}$, 

and 

for every $x\in (X\setminus Q_{i})\setminus H_{i}^{(n)} $, 
$f_{n}(x)\in 
E_{i,\,1}^{(n)}$;
\end{enumerate}
\par\smallskip
\item[(4)] \label{Property4} we have
\par\smallskip
\begin{enumerate}
\par\smallskip
\item[(4a)] \label{Property4a}$\mu (B_{n})>1-\eta _{n}$;
\par\smallskip
\item[(4b)] \label{Property4b}for every $x\in B_{n}$,
$|f_{n}(x)-f_{n-1}(x)|<\gamma 
_{n}$;
\par\smallskip
\item[(4c)] \label{Property4c}for every $x\in B_{n}$, $||\Phi 
_{f_{n}}(x)-\Phi 
_{f_{n-1}}(x)||<\gamma _{n}$;
\end{enumerate}
\par\smallskip
\item[(5)] \label{Property5}we have
\par\smallskip
\begin{enumerate}
\item[(5a)]\label{Property5a} $ ||f_{n}-f_{n-1}||_{\LL{2}(X,\bb,\mu 
)}<2^{-n}$;
\par\smallskip
\item[(5b)] \label{Property5b}$||\Phi _{f_{n}}-\Phi 
_{f_{n-1}}||_{\LL{2}(X,\bb,\mu;Z 
)}<2^{-n}$;
\end{enumerate}
\par\smallskip
\item[(6)] \label{Property6}for every $i\in\{0,\dots,n\}$,
\par\smallskip
\begin{enumerate}
\item[(6a)] \label{Property6a}$\mu (G_{i}^{(n)})\ge \delta _{i}(1+2^{-n})$;
\par\smallskip
\item[(6b)] \label{Property6b}$G_{i}^{(n)}\subseteq G_{i}^{(n-1)}$ for 
every 
$i\in\{0,\dots,n-1\}$;
\par\smallskip
\item[(6c)] \label{Property6c}$\Phi _{f_{n}}(x)\in U_{i}$ for every $x\in 
G_{i}^{(n)}$.
\end{enumerate}
\end{enumerate}
\par\medskip
We start the construction by setting (recall that $Q_{0}=X$ and $U_{0}=Z$):
$E_{0,\,0}^{(0)}=\{0\}$,  $E_{0,\, 1}^{(0)}=\varnothing$, $B_{0}=C_{0}=X$,
$\alpha _{0}=\beta _{0}=\gamma _{0}=\delta _{0}=\eta 
_{0}=1/8$, $G_{0}^{(0)}=X$, $H_{0}^{(0)}=\varnothing$ and 
$f_{0}=0$.
\par\smallskip
Suppose now that the construction has been carried out until step $n$. At 
step $n+1$, we start by introducing
\par\smallskip
\begin{enumerate}
 \item [--] an integer $N\ge 1$, which will be chosen very large at the end of the construction;
\item[--] two positive numbers $\eta $ and $\gamma $, independent 
of each other, which will be chosen very small at the end of the construction.
\end{enumerate}
\par\smallskip
As $T$ is invertible and ergodic, there exists a measurable subset $E$ of 
$X$ such that
 $\mu (E)>0$, $\mu \left(\bigcup_{|k|\le N}\expo{T}{k}E\right)<\eta $ and the sets $\expo{T}{k}E$, $|k|\le N$, are pairwise 
disjoint.
\par\smallskip
Recall that 
\[
u_{n+1}=\sum_{|k|\le d_{n+1}}a_{k}^{\np 
}\,\expo{A}{-k}z_{0}\quad\textrm{and}\quad 
U_{n+1}=B(u_{n+1},r_{n+1}).
\]
We suppose that $N\ge d_{n+1}$.
\par\smallskip 
\textbf{Step 1:}
We first define an auxiliary function $g_{n+1}$ on $X$ in the following 
way:
\[
g_{n+1}(x)=
\begin{cases}
 a_{k}^{\np }&\quad\textrm{if}\ x\in\expo{T}{k}E,\quad |k|\le d_{n+1}\\
 0&\quad\textrm{if}\ x\in\expo{T}{k}E,\quad d_{n+1}<|k|\le N\\
 f_{n}(x)&\quad\textrm{if}\ x\not\in  \bigcup\limits_{|k|\le 
N}\expo{T}{k}E.
\end{cases}
\]
The function $g_{n+1}$ thus defined is finite-valued and it coincides with 
$f_{n}$ on 
the 
set $$B=X\setminus\bigcup\limits_{|k|\le N}\expo{T}{k}E,$$ which has $\mu $-measure 
larger than 
$1-\eta $. The range of $g_{n+1}$ is equal to 
\[
\textrm{Ran}(f_{n}\restriction_{B})\cup\{0\}\cup\{a_{k}^{\np };\ |k|\le d_{n+1}\}, 
\]
and we write this 
finite set as $\{c_{l}^{\np };\ 0\le l\le l_{n+1}\}$, with all numbers 
$c_{l}^{\np }$ 
distinct.
\par\smallskip 
By the Rokhlin Lemma, we can choose a subset
$E'\in\bb$ of $X$  and an integer $M\ge d$ 
such 
that the sets $\expo{T}{k}E'$, $|k|\le M$, are pairwise disjoint, and 
$$\mu \bigl(\bigcup_{|k|\le M-d}\expo{T}{k}E' \bigr)>1-\eta .$$
\par\smallskip 
\textbf{Step 2:}
We state and prove in this step a simple abstract lemma, which will be used in the forthcoming Steps 3 and 4 in order to approximate certain finite families of scalars (like the family $(c_{l}^{(n+1)})_{0\le l\le l_{n+1}}$) by other families of scalars with further additional properties.

\begin{lemma}\label{Lemme0}
 Let $r\ge 1$ and let $\mkern 1.5 mu\pmb{d}=(d_{1},\ldots, d_{r})$ be an $r$-tuple of positive integers. We denote by $E_{\mkern 1.5 mu\pmb{d}}$ the subset of $\Z^{r}$ defined by 
 $$E_{\mkern 1.5 mu\pmb{d}}=\{\mkern 1.5 mu\pmb{u}=(u_{1},\ldots, u_{r});\ 0\le u_{i}\le d_{i}\; \textrm{for every } i=1,\ldots, r\}.$$
 For every $i=1,\ldots, r$, let $\lambda _{i}$ be a map from $F$ into $\{0,\ldots, d_{i}\}$. 
We denote by $\mkern 1.5 mu\pmb{\lambda}$
  the $r$-tuple of maps 
$\mkern 1.5 mu\pmb{\lambda}=(\lambda_1,\ldots, \lambda_{r})$ from $F$ into
 $\{0,\ldots, d_{1}\}\times\ldots\times \{0,\ldots, d_{r}\}$. For any such $\mkern 1.5 mu\pmb{\lambda }$, let $\sigma  _{\mkern 1.5 mu\pmb{\lambda }}$ be the functional on the vector space of functions from $E_{\mkern 1.5 mu\pmb{d}}$ into $\C$, identified with $\C^{\#E_{\mkern 1.5 mu\pmb{d}}}$, defined by
 \begin{align*}
 \sigma _{\mkern 1.5 mu\pmb{\lambda } }\,:\qquad \C^{\#E_{\mkern 1.5 mu\pmb{d}}}&\longrightarrow\C\\ \ \ \bigl( \gamma _{\mkern 1.5 mu\pmb{u}}\bigr)_{\mkern 1.5 mu\pmb{u }\in E_{\mkern 1.5 mu\pmb{d }}}&\longmapsto \sum_{p\in F}c_{p}\, \gamma _{\mkern 1.5 mu\pmb{\lambda }(p)}.
\end{align*}
There exists a dense subset of $\C^{\#E_{\mkern 1.5 mu\pmb{d}}}$ consisting of elements $(\gamma _{\mkern 1.5 mu\pmb{u }})_{\mkern 1.5 mu\pmb{u }\in E_{\mkern 1.5 mu\pmb{d }}}$ with the following property: 
$$\sigma _{\mkern 1.5 mu\pmb{\lambda } }((\gamma _{\mkern 1.5 mu\pmb{u }})_{\mkern 1.5 mu\pmb{u }\in E_{\mkern 1.5 mu\pmb{d }}})\not = \sigma _{\mkern 1.5 mu\pmb{\lambda' } }((\gamma _{\mkern 1.5 mu\pmb{u }})_{\mkern 1.5 mu\pmb{u }\in E_{\mkern 1.5 mu\pmb{d }}})$$
for every maps $\mkern 1.5 mu\pmb{\lambda }$ and $\mkern 1.5 mu\pmb{\lambda' }$ such that $\mkern 1.5 mu\pmb{\lambda }(F)\not = \mkern 1.5 mu\pmb{\lambda' }(F)$.
\end{lemma}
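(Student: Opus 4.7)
My plan is a finite-dimensional Baire-category argument in the vector space $\C^{\#E_{\pmb{d}}}$. The starting observation is that for any map $\pmb{\lambda}\colon F\to E_{\pmb{d}}$, the functional $\sigma_{\pmb{\lambda}}$ is linear in the coordinates $(\gamma_{\pmb{u}})_{\pmb{u}\in E_{\pmb{d}}}$: expanding the definition, the coefficient of $\gamma_{\pmb{u}}$ in $\sigma_{\pmb{\lambda}}(\gamma)$ is $\sum_{p\in F,\,\pmb{\lambda}(p)=\pmb{u}} c_{p}$, with the convention that an empty sum equals zero. Consequently, for any two such maps $\pmb{\lambda}$ and $\pmb{\lambda}'$, the set
$$V_{\pmb{\lambda},\pmb{\lambda}'} = \bigl\{(\gamma_{\pmb{u}})_{\pmb{u}\in E_{\pmb{d}}} : \sigma_{\pmb{\lambda}}(\gamma)=\sigma_{\pmb{\lambda}'}(\gamma)\bigr\}$$
is a linear subspace of $\C^{\#E_{\pmb{d}}}$.

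Since there are only finitely many maps from $F$ into $E_{\pmb{d}}$, only finitely many pairs $(\pmb{\lambda},\pmb{\lambda}')$ satisfy $\pmb{\lambda}(F)\neq\pmb{\lambda}'(F)$. Granting that each corresponding $V_{\pmb{\lambda},\pmb{\lambda}'}$ is a \emph{proper} subspace, the finite union $\bigcup V_{\pmb{\lambda},\pmb{\lambda}'}$ is a proper closed subset of $\C^{\#E_{\pmb{d}}}$ by the standard fact that in a finite-dimensional complex vector space a finite union of proper linear subspaces cannot cover the whole space. Its complement is then open and dense, and any element of this complement enjoys the property demanded by the lemma.

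The critical step, and the one I expect to require the most care, is showing that the linear form $\sigma_{\pmb{\lambda}}-\sigma_{\pmb{\lambda}'}$ does not vanish identically whenever $\pmb{\lambda}(F)\neq\pmb{\lambda}'(F)$. The natural approach is to pick $\pmb{u}_{0}$ in the symmetric difference $\pmb{\lambda}(F)\,\triangle\,\pmb{\lambda}'(F)$, say $\pmb{u}_{0}\in\pmb{\lambda}(F)\setminus\pmb{\lambda}'(F)$; then the coefficient of $\gamma_{\pmb{u}_{0}}$ in $\sigma_{\pmb{\lambda}}-\sigma_{\pmb{\lambda}'}$ reduces to the non-empty sub-sum $\sum_{p\in F,\,\pmb{\lambda}(p)=\pmb{u}_{0}} c_{p}$ of the non-zero scalars $(c_{p})_{p\in F}$. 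The delicate point is ruling out accidental cancellation in this sub-sum; I would handle it by exploiting the flexibility in the choice of $z_{0}^{*}$ set up in Section \ref{SousSec2a}, choosing $z_{0}^{*}$ generically (for instance so that the family $(c_{p})_{p\in F}$ is $\Q$-linearly independent) in order to guarantee that no non-empty sub-sum of $(c_{p})_{p\in F}$ vanishes. Once this is in place, each $V_{\pmb{\lambda},\pmb{\lambda}'}$ is proper and the Baire-category conclusion follows immediately.
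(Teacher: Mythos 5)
Your proposal follows the same Baire--category scheme as the paper's proof --- view $\sigma_{\pmb{\lambda}}-\sigma_{\pmb{\lambda}'}$ as a linear form, observe that its kernel is a proper subspace once the form is nonzero, and take the complement of the finite union of kernels --- so structurally the two arguments coincide. The substance of your remarks is in the ``critical step,'' and there you have in fact put your finger on a gap in the paper's own argument. The coefficient of $\gamma_{\pmb{u}_0}$ for $\pmb{u}_0\in\pmb{\lambda}(F)\setminus\pmb{\lambda}'(F)$ is the nonempty sub-sum $\sum_{p\,:\,\pmb{\lambda}(p)=\pmb{u}_0}c_p$, and this can vanish even when the $c_p$ are distinct, which is all the paper invokes: with $F=\{1,2,3\}$, $c_1=1$, $c_2=-1$, $c_3=2$, take $\pmb{\lambda}$ sending $1,2\mapsto a$ and $3\mapsto b$, and $\pmb{\lambda}'\equiv b$; then $\pmb{\lambda}(F)=\{a,b\}\neq\{b\}=\pmb{\lambda}'(F)$ yet $\sigma_{\pmb{\lambda}}=\sigma_{\pmb{\lambda}'}=2\gamma_b$.

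Where your proposal itself has a gap is in the repair. The functional $z_0^*$ must annihilate $\overline{\textrm{span}}\,[A^{-n}z_0\,;\ n\in\Z\setminus F]$; assumption (b) gives only that this annihilator is nontrivial, not that it is rich enough to choose $(c_p)_{p\in F}$ generically. If the annihilator happens to be one-dimensional --- a perfectly possible situation --- the only freedom is scalar multiplication, which cannot affect whether a sub-sum of the $c_p$ vanishes, so the $\Q$-linear independence you invoke may be unattainable. The repair that does work is to notice that everywhere the lemma is actually used (Steps~3 and~4), the maps have the shape $\pmb{\lambda}(p)=(\tau(p),p+k)$ or $(\tau(p),p+k,\theta(p))$, and the coordinate $p\mapsto p+k$ is injective on $F$. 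Every $\pmb{\lambda}$ fed into the lemma is therefore injective, each preimage $\pmb{\lambda}^{-1}(\pmb{u}_0)$ is a singleton, and your worrisome sub-sum is a single $c_p\neq0$, so no cancellation can occur. It would be cleanest to state and prove the lemma for injective $\pmb{\lambda}$ only; then nothing about the $c_p$ beyond their being nonzero is needed.
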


\begin{proof}
Let us first observe that if  $\mkern 1.5 mu\pmb{\lambda }(F)\not = \mkern 1.5 mu\pmb{\lambda' }(F)$, $\sigma _{\mkern 1.5 mu\pmb{\lambda }}\not = \sigma _{\mkern 1.5 mu\pmb{\lambda' }}$. This follows from the fact that all coefficients $c_{p}$, $p\in F$, are distinct. Let then
$$\mkern 1.5 mu\pmb{\Sigma  }=\{(\mkern 1.5 mu\pmb{\lambda },\mkern 1.5 mu\pmb{\lambda' });\ \mkern 1.5 mu\pmb{\lambda }(F)\not = \mkern 1.5 mu\pmb{\lambda' }(F)\}.$$
For each $(\mkern 1.5 mu\pmb{\lambda },\mkern 1.5 mu\pmb{\lambda' })\in\mkern 1.5 mu\pmb{\Sigma  }$, the kernel $\ker(\sigma _{\mkern 1.5 mu\pmb{\lambda }}-\sigma _{\mkern 1.5 mu\pmb{\lambda' }})$ is different from the whole space $\C^{\#E_{\mkern 1.5 mu\pmb{d}}}$. The set $\mkern 1.5 mu\pmb{\Sigma  }$ being finite, the Baire Category Theorem yields that
$$\bigcup_{(\mkern 1.5 mu\pmb{\lambda },\mkern 1.5 mu\pmb{\lambda' })\in\mkern 1.5 mu\pmb{\Sigma  }}(\sigma _{\mkern 1.5 mu\pmb{\lambda }}-\sigma _{\mkern 1.5 mu\pmb{\lambda' }})^{-1}(\C^{*})$$ is dense in $\C^{\#E_{\mkern 1.5 mu\pmb{d}}}$, which proves our claim.
\end{proof}

\par\smallskip 
\textbf{Step 3:}
We define a second auxiliary function $h_{n+1}$ on $X$ by setting
\[
h_{n+1}(x)=
\begin{cases}
 c_{l,\,k}^{\np }&\quad\textrm{if}\ g_{n+1}(x)=c_{l}^{\np }\ \textrm{and}\ 
 x\in\expo{T}{k}E'\ \textrm{for some}\ |k|\le M,\\
 c_{l}^{\np }&\quad\textrm{if}\ g_{n+1}(x)=c_{l}^{\np }\ \textrm{and}\ 
 x\not\in\bigcup_{ |k|\le M}\expo{T}{k}E',
\end{cases}
\]
where for every $l\in\{0,\dots,l_{n+1}\}$, $c_{l,\,k}^{(n)}$ is so close 
to 
$c_{l}^{\np }$ for each $ |k|\le M$ that 
\[
||h_{n+1}-g_{n+1}||_{\infty }\le \frac{\gamma }{2},
\]
 all the numbers $c_{l,\,k}^{\np }$, $l\in\{0,\dots,l_{n+1}\}$, $ 
|k|\le 
M$, and 
$ c_{l}^{\np }$, $l\in\{0,\dots,l_{n+1}\}$, are distinct,
and, moreover, the numbers $c_{l,\,k}^{(n+1)}$, $l\in\{0,\dots,l_{n+1}\}$, $|k|\le M$, have the following property:
\par\smallskip
whenever $\tau ,\,\tau '$ are two maps from $F$ into $\{0,\dots,l_{n+1}\}$, and $k,\, k'$ are two integers with $|k|,\,|k'|\le M-d$,  we have
\[
\sum_{p\in F}c_{p}\,c_{\tau (p),\,k+p}^{(n+1)}\neq\sum_{p\in F}c_{p}\,c_{\tau '(p),\,k'+p}^{(n+1)}
\]
as soon as there exists a $p\in F$ such that  $(\tau (p),k+p)\neq(\tau '(p),k'+p)$.
\par\smallskip
Observe that since $|k|,\,|k'|\le M-d$ and $d=\max |F|$, $|k+p|,\,|k'+p|\le M$ for every $p\in F$, so that the quantities $c_{\tau (p),\,k+p}^{(n+1)}$ and $c_{\tau '(p),\,k'+p}^{(n+1)}$ in the expression above are well-defined.
\par\smallskip
That the scalars $c_{l,\,k}^{(n+1)} $ can indeed be chosen so as to satisfy these properties is a consequence of Lemma \ref{Lemme0}. Denote by ${\Sigma  }$ the set of all $4$-tuples $(\tau,\tau',k,k')$, where $\tau,\tau'$ are maps from $F$ into $\{0,\ldots, l_{n+1}\}$ and $k,k'$ are integers with $|k|,\,|k'|\le M-d$, such that there exists a $p\in F$ with $(\tau (p),k+p)\neq(\tau '(p),k'+p)$. For any map $\tau : F \To\{0,\dots, l_{n+1}\}$ and any integer $k$ with $|k|\le M-d$, let 
\begin{align*}
 \mkern 1.5 mu\pmb{\lambda  } _{\tau ,\,k}\,:\qquad F &\longrightarrow
 \{0,\dots, l_{n+1}\}\times \{-M,\dots, M\}.\\ \ \ p&\longmapsto (\tau(p), p+k)
\end{align*}
Let us check that if $(\tau,\tau',k,k')$ belongs to 
${\Sigma  }$, $\mkern 1.5 mu\pmb{\lambda  } _{\tau ,\,k}(F)\not =\mkern 1.5 mu\pmb{\lambda  } _{\tau' ,\,k'}(F)$.
If $\mkern 1.5 mu\pmb{\lambda  } _{\tau ,\,k}(F) =\mkern 1.5 mu\pmb{\lambda  } _{\tau' ,\,k'}(F)$, then 
$$\bigl\{(\tau (p),\,k+p);\ p\in F\bigr\}=\bigl\{(\tau '(p), k'+p);\ p\in F\bigr\},$$ so that $k+F=k'+F$. As the set $F$ is finite, $k=k'$, and thus for every $p\in F$ there exists a $p'\in F$ such that $(\tau (p), k+p)=(\tau' (p'), k+p')$. So $p=p'$ and $\tau (p)=\tau' (p)$. Thus $(\tau(p), k+p)=(\tau' (p), k+p)$ for every $p\in F$,
which is
contrary to our assumption. So  $\mkern 1.5 mu\pmb{\lambda  } _{\tau ,\,k}(F)\not =\mkern 1.5 mu\pmb{\lambda  } _{\tau' ,\,k'}(F)$ as soon as $(\tau,\tau',k,k')$ belongs to 
${\Sigma  }$. 

Applying Lemma \ref{Lemme0}, it follows from the observation above that we can choose a family of scalars
 $\bigl( c_{l,\,k}^{(n+1)}\bigr)_{0\le l\le l_{n+1},\,|k|\le M}$ such that $$\bigl|c_{l,\,k}^{(n+1)}-c_{l}^{(n+1)}\bigr|<\frac{\gamma }{2}\quad \textrm{ for every } l\in\{0,\ldots, l_{n+1}\} \textrm{ and }|k|\le M,$$ all the numbers $c_{l,\,k}^{n+1}$ and $c_{l}^{(n+1)}$ are distinct, and 
\[
\bigl( \sigma _{\mkern 1.5 mu\pmb{\lambda  } _{\tau ,\,k}}-\sigma _{\mkern 1.5 mu\pmb{\lambda  } _{\tau' ,\,k'}}\bigr)\,\bigl(\bigl( c_{l,\,k}^{(n+1)}\bigr)_{0\le l\le l_{n+1},\,|k|\le M}\bigr)\neq 0\quad \textrm{for every}\ (\tau ,\,\tau',\,k,\,k')\in {{\Sigma   }} ,
\] i.e. $$\displaystyle \sum_{p\in F}c_{p}\,c_{\tau (p),\,k+p}^{(n+1)}\neq\sum_{p\in F}c_{p}\,c_{\tau '(p),\,k'+p}^{(n+1)}
\quad \textrm{for every}\ (\tau ,\,\tau ',\,k,\,k')\in {{\Sigma   }} .$$
\par\smallskip 
Now the funtion $h_{n+1}$ has been defined, we observe that it is finite-valued, and we write its range as $$\{b_{j}^{\np };\ 0\le j\le 
j_{n+1}\}$$
where all the numbers $b_{j}^{\np }$ are distinct.
We also set $$C_{n+1}=\bigcup_{|k|\le M-d }\expo{T}{k}E'.$$ By our assumptions on $M$ and $E'$, $\mu 
(C_{n+1})>1-\eta$.
\par\smallskip 
\textbf{Step 4:}
We construct in this step complex numbers $b_{j,\,0}^{(n+1)}$ and $b_{j,\,1}^{(n+1)}$, 
$0\le j\le j_{n+1}$, which are such that
all the numbers $b_{j,\,0}^{(n+1)}$ and $b_{j,\,1}^{(n+1)}$ are distinct, and both $b_{j,\,0}^{(n+1)}$ and $b_{j,\,1}^{(n+1)}$ are so close to $b_{j}^{(n+1)}$ for each $j\in\{0,\dots, j_{n+1}\}$ that 
\[
\sup_{j\in\{0,\dots, j_{n+1}\}}\Bigl(\, |b_{j,\,0}^{(n+1)}-b_{j}^{(n+1)}|+|b_{j,\,1}^{(n+1)}-b_{j}^{(n+1)}|\,\Bigr)<\dfrac{\gamma }{2}\cdot
\]
Moreover, if $b_{j}^{(n+1)}=c_{l,\,k}^{(n+1)}$ for some $l\in\{0,\dots,l_{n+1}\}$ and $|k|\le M$, we write $$b_{j,\,0}^{(n+1)}=c_{l,\,k,\,0}^{(n+1)}\quad \textrm{ and }\quad b_{j,\,1}^{(n+1)}=c_{l,\,k,\,1}^{(n+1)},$$ and we require that the following holds true: 
\par\smallskip
for any maps $\theta ,\,\theta '\,:\ F\longrightarrow \{0,1\}$,  $\tau ,\,\tau '\,:\ F\longrightarrow \{0,\dots,l_{n+1}\}$ and any integers $k,\,k'$ with $|k|,|k'|\le M-d$,  
\[
\sum_{p\in F}c_{p}\,c_{\tau (p),\,k+p,\,\theta (p)}^{(n+1)}\neq\sum_{p\in F}c_{p}\,c_{\tau'(p),\,k'+p,\,\theta '(p)}^{(n+1)}
\]
as soon as there exists a $p\in F$ such that $(\tau (p),k+p,\theta (p))\neq (\tau' (p),k'+p,\theta '(p))$.
\par\medskip 
The proof of the existence of such numbers again relies on Lemma \ref{Lemme0}. 
Denote by $\mathcal{F}$ the set of all $6$-tuples $(\tau,\tau',k,k',\theta 
,\theta ')$, where $\tau,\tau'$ are maps from $F$ into $\{0,\ldots, l_{n+1}\}$,
$\theta ,\theta '$ maps from 
$F$ into $\{0,1\}$,  and $k,k'$ integers with $|k|,|k'|\le M-d$, such that there 
exists a $p\in F$ with $(\tau (p),k+p,\theta (p))\neq (\tau' (p),k'+p,\theta 
'(p))$. For any maps $\tau \,:\ F\longrightarrow \{0,\ldots, l_{n+1}\}$, 
$\theta \,:\ F\longrightarrow \{0,1\}$,
 and any integer $k$ with $|k|\le M-d$, let
\begin{align*}
 \mkern 1.5 mu\pmb{\lambda}  _{\tau ,\,k,\,\theta }\,:\qquad F&\longrightarrow \{0,\ldots, l_{n+1}\}\times \{-M,\ldots, M\}\times\{0,1\}\\ \ \ p&\longmapsto (\tau(p), k+p,\theta (p)).
\end{align*}
We claim that if $(\tau ,\,\tau ',\,k,\,k',\,\theta ,\,\theta ')$ belongs to 
$\mathcal{F}$, then $\mkern 1.5 mu\pmb{\lambda}  _{\tau ,\,k,\,\theta }(F)\not = 
\mkern 1.5 mu\pmb{\lambda}  _{\tau' ,\,k',\,\theta' }(F)$. Indeed, if these two 
sets were were equal, we would have 
\[
\bigl\{\bigl( \tau (p),\,k+p,\,\theta (p)\bigr);\ p\in F\bigr\}=\bigl\{\bigl( \tau '(p),\,k'+p,\,\theta' (p)\bigr);\ p\in F\bigr\}\cdot 
\]
Hence $k+F=k'+F$, so that $k=k'$. Thus for every $p\in F$ there exists $p'\in F$ such that $\bigl( \tau (p),\,k+p,\,\theta (p)\bigr)=\bigl( \tau '(p'),\,k+p',\,\theta '(p')\bigr)$. Necessarily, $p=p'$, so that $\tau (p)=\tau '(p)$ and $\theta (p)=\theta '(p)$. Hence
$\bigl( \tau (p),\,k+p,\,\theta (p)\bigr)=\bigl( \tau '(p),\,k+p,\,\theta '(p)\bigr)$ for every $p\in F$, and this contradicts our initial assumption. So $\mkern 1.5 mu\pmb{\lambda}  _{\tau ,\,k,\,\theta }(F)\not = \mkern 1.5 mu\pmb{\lambda}  _{\tau' ,\,k',\,\theta' }(F)$. It thus follows from Lemma \ref{Lemme0} that 
numbers $c_{l,\,k,\,0}^{(n+1)}$ and $c_{l,\,k,\,1}^{(n+1)}$ can be chosen as close to $c_{l,\,k}^{(n+1)}$ as we wish, all distinct, and such that 
\[
\sum_{p\in F}c_{p}\,c_{\tau (p),\,k+p,\,\theta (p)}^{(n+1)}\neq\sum_{p\in F}c_{p}\,c_{\tau '(p),\,k'+p,\,\theta '(p)}^{(n+1)}\quad \textrm{ for every } (\tau ,\,\tau ',\,k,\,k',\,\theta ,\,\theta ')\in\mathcal{F}.
\]
\par\smallskip
This defines $b_{j,\,0}^{(n+1)}$ and $b_{j,\,1}^{(n+1)}$ when $b_{j}^{(n+1)}=c_{l,\,k}^{(n+1)}$ for some $l\in\{0,\dots,l_{n+1}\}$ and $|k|\le M$. It is then easy to define the numbers $b_{j,\,0}^{(n+1)}$ and $b_{j,\,1}^{(n+1)}$ for the remaining indices in such a way that they are sufficiently close to $b_{j}^{(n+1)}$, distinct, and distinct from all the numbers $c_{l,\,k,\,0}^{(n+1)}$ and $c_{l,\,k,\,1}^{(n+1)}$.
\par\smallskip
\textbf{Step 5:}
We can now define the function $f_{n+1}$ on $X$ by setting
\[
f_{n+1}(x)=
\begin{cases}
 b_{j,\,0}^{\np }&\quad\textrm{if}\ h_{n+1}(x)=b_{j}^{\np }\ \textrm{and}\ 
x\in
 Q_{n+1}\\
 b_{j,\,1}^{\np }&\quad\textrm{if}\ h_{n+1}(x)=b_{j}^{\np }\ \textrm{and}\ 
x\in 
 X\setminus Q_{n+1}. 
\end{cases}
\]
Obviously
$$||h_{n+1}-f_{n+1}||_{\infty }
<\frac{\gamma }{2}\cdot$$
If $x$ belongs to $C_{n+1}$, then there exists an integer $k$ with $|k|\le M-d$ such that $x\in T^{k}E'$. Hence $T^{p}x\in T^{p+k}E'$ for every $p\in F$, and $|k+p|\le M$. It follows that there exists a map $\tau \,:\ F \To\{0,\dots,l_{n+1}\}$ such that $$h_{n+1}(T^{p}x)=c_{\tau (p),\,k+p}^{(n+1)}\quad \textrm{ for every } p\in F.$$ By the definition of the function $f_{n+1}$, there exists a map $\theta :\ F\longrightarrow \{0,1\}$ such that $$f_{n+1}\bigl( T^{p}x\bigr)=c_{\tau (p),\,k+p,\,\theta (p)}^{(n+1)}\quad \textrm{ for every } p\in F.$$ This map $\theta $ satisfies $\theta (0)=0$ if $x\in Q_{n+1}$ and $\theta (0)=1$ if $x\in X\setminus Q_{n+1}$. We have 
\[
\sum_{p\in F} c_{p} f_{n+1}\bigl( T^{p}x\bigr)=\sum_{p\in F}c_{p}\,c_{\tau (p),\,k+p,\,\theta (p)}^{(n+1)}.
\]

\par\smallskip
\textbf{Step 6:}
For every $i\in\{0,\dots,n\}$, let
\begin{align*}
  J_{i,\,0}^{\np }=\Bigl\{j\in\{0,\dots,j_{n+1}\}\ \textrm{ ; } &\textrm{there exists}\ l\in\{0,\dots,l_{n+1}\}\textrm{ such that} \ c_{l}^{\np }\in E_{i,\,0}^{(n)}\ \\
 &\textrm{with 
either}\\
 &b_{j}^{\np }=c_{l}^{\np }\ \textrm{or}\ b_{j}^{\np }=c_{l,\,k}^{\np }\ 
 \textrm{for some}\ |k|\le M\Bigr\}\\[1ex]
  J_{i,\,1}^{\np }=\Bigl\{j\in\{0,\dots,j_{n+1}\}\ \textrm{ ; } &\textrm{there exists}\ l\in\{0,\dots,l_{n+1}\}\textrm{ such that}\ c_{l}^{\np }\in E_{i,\,1}^{(n)}\\\
 & \textrm{with 
either}\\
 &b_{j}^{\np }=c_{l}^{\np }\ \textrm{or}\ b_{j}^{\np }=c_{l,\,k}^{\np }\ 
 \textrm{for some}\ |k|\le M\Bigr\} .
\end{align*}

Set for $i\in\{0,\dots,n\}$

\begin{align*}
 E_{i,\,0}^{\np }&=\bigl\{b_{j,\,0}^{\np };\ j\in 
J_{i,0}^{\np }\bigr\}\cup\bigl\{b_{j,\,1}^{\np };\ j\in J_{i,0}^{\np 
}\bigr\},\\
E_{i,\,1}^{\np }&=\bigl\{b_{j,\,0}^{\np };\ j\in 
J_{i,1}^{\np }\bigr\}\cup\bigl\{b_{j,\,1}^{\np };\ j\in J_{i,1}^{\np 
}\bigr\},\\
E_{n+1,\,0}^{\np }&=\bigl\{b_{j,\,0}^{\np };\ j\in 
\{0,\dots,j_{n+1}\}\bigr\},\\
E_{n+1,\,1}^{\np }&=\bigl\{b_{j,\,1}^{\np };\ j\in 
\{0,\dots,j_{n+1}\}\bigr\}.
\end{align*}
\par\smallskip
\textbf{Step 7:} With these definitions, let us check that property (1) holds true. Of course, all the sets
$E_{i,\,0}^{\np }$ and $E_{i,\,1}^{\np }$ are finite and 
$$\textrm{ran}(f_{n+1})=\Bigl( 
\bigcup_{i=0}
^{n+1}E_{i,\,0}^{\np }\Bigr)\cup\Bigl( \bigcup_{i=0}^{n+1 }E_{i,\,1}^{\np 
}\Bigr)=E_{n+1,\,0}^{\np }\cup E_{n+1,\,1}^{\np }.$$
All the numbers $b_{j,\,0}^{\np }$ and $b_{j,\,1}^{\np }$, 
$j\in\{0,\dots,j_{n+1}\}$ are distinct, and for every index $i\in\en{n}$, $J_{i,\,0}^{\np }\cap 
J_{i,\,1}^{\np }=\varnothing$ (because $E_{i,\,0}^{(n)}\cap 
E_{i,\,1}^{(n)}=\varnothing$). So  
$E_{i,\,0}^{\np}\cap E_{i,\,1}^{\np}=\varnothing$ for all 
$i\in\{0,\dots,n\}$. Also 
clearly $E_{n+1,\,0}^{\np}\cap E_{n+1,\,1}^{\np}=\varnothing$. So property 
(1) holds true.
\par\smallskip 
\textbf{Step 8:}
In order to check property (2), let us fix
$i\in\{0,\dots,n+1\}$, 
and $x,\,y\in C_{n+1}$ such that $f_{n+1}(x)\in E_{i,\,0}^{\np}$ and
$f_{n+1}(y)\in E_{i,\,1}^{\np}$. 
By Step 5 above, there exist  maps $\tau ,\,\tau '\,:F\ \To\{0,\dots, l_{n+1}\}$, integers $k,\,k'$ with $|k|,\,|k'|\le M-d$, and maps $\theta ,\,\theta '\,:\ F\longrightarrow\{0,1\}$ such that 
\[
f_{n+1}\bigl( T^{p}x\bigr)=c_{\tau (p),\,k+p,\,\theta (p)}^{(n+1)}\quad\textrm{and}\quad
f_{n+1}\bigl( T^{p}y\bigr)=c_{\tau '(p),\,k'+p,\,\theta' (p)}^{(n+1)} \quad \textrm{for every } p\in F.
\]
Recall that $0\in F$.
  Since $E_{i,\,0}^{(n+1)}\cap E_{i,\,1}^{(n+1)}=\varnothing$, $f_{n+1}(x)\neq f_{n+1}(y)$, so that $(\tau (0),\,k,\,\theta (0))\neq(\tau '(0),\,k',\,\theta '(0))$. Hence 
$(\tau ,\,\tau ',\,k,\,k',\, \theta ,\,\theta ')$ belongs to $\mathcal{F}$, and 
\begin{align*}
\sum_{p\in F}c_{p}\,c_{\tau (p),\,k+p,\,\theta (p)}^{(n+1)}&\neq \sum_{p\in F}c_{p}\,c_{\tau '(p),\,k'+p,\,\theta' (p)}^{(n+1)}, 
\end{align*}
i.e.
\begin{align*}
\sum_{p\in F}c_{p}\,f_{n+1}\bigl( T^{p}x\bigr)&\neq \sum_{p\in F} c_{p}\,f_{n+1}\bigl( T^{p}y\bigr).
\end{align*}

Thus $D_{i,\,0}^{\np}\cap D_{i,\,1}^{\np}=\varnothing$ for every 
$i\in\{0,\dots,n+1\}$. 
Once $\eta _{n+1}$ is fixed (and this will be done only later on in the 
construction), 
one can choose $\eta <\eta _{n+1}$, and then
$\beta _{n+1}$ so 
small that properties (2a), (2b), and (2c) 
hold true.
\par\smallskip 
\textbf{Step 9:}
Our next step is to define the sets $H_{i}^{\np}$ for 
$i\in\en{n+1}$ and to 
prove property (3). We set 
\begin{align*}
 H_{i}^{\np}&=H_{i}^{(n)}\cup\Bigl(\bigcup_{|k|\le N}\expo{T}{k}E 
\Bigr)\quad \textrm{ for every } i\in\en{n}\\
 H_{n+1}^{\np}&=\varnothing.
\end{align*}
Then for every $i\in\en{n}$, $\mu (H_{i}^{\np})\le \mu (H_{i}^{(n)})+\eta 
< \alpha 
_{i}(1-2^{-n})+\eta $. So, if $\eta $ is chosen sufficiently small, $$\mu 
(H_{i}^{\np})<\alpha _{i}(1-2^{-\np}).$$ Also, $\mu 
(H_{n+1}^{\np})=0<\alpha 
_{n+1}
(1-2^{-\np})$ whatever the value of $\alpha _{n+1}$. So (3a) holds. As (3b) is obvious, it remains to check 
(3c).
\par\smallskip 
Let $i\in\en{n}$ and $x\in Q_{i}\setminus H_{i}^{\np}$. Then  $x\in X\setminus \bigl(\bigcup_{|k|\le N}\expo{T}{k}E 
\bigr)$ so that 
$$g_{n+1}(x)=f_{n}(x)=c_{l}^{\np} \textrm{ for some } l\in\en{l_{n+1}}.$$ Also $x\in Q_{i}\setminus H_{i}^{(n)}$, so $f_{n}(x)\in E_{i\,,0}^{(n)}$ by the induction assumption, that is  
$c_{l}^{\np}
\in E_{i,\,0}^{(n)}$. We have either
$$h_{n+1}(x)=c_{l}^{\np} \quad \textrm{or}\quad h_{n+1}(x)=c_{l,\,k}^{\np} \textrm{ for some }  |k|\le M.$$ If we write 
$h_{n+1}(x)=b_{j}^{\np}$ for some $j\in\en{j_{n+1}}$, then $j$ 
belongs 
to 
$J_{i,\,0}^{\np}$. So $b_{j,\,0}^{\np}$ and $b_{j,\,1}^{\np}$ belong to 
$E_{i,\,0}^{\np}$. Since $f_{n+1}(x)$ is equal to either $b_{j,\,0}^{\np}$ 
or 
$b_{j,\,1}^{\np}$, it follows that $f_{n+1}(x)$ belongs to 
$E_{i,\,0}^{\np}$. In the 
same way, if $x\in\bigl(X\setminus Q_{i} \bigr)\setminus H_{i}^{\np}$, 
then 
$f_{n+1}
(x)$ belongs to $E_{i,\,1}^{\np}$.
\par\smallskip 
Let now $i=n+1$. Let $x\in Q_{n+1}$, and let $j\in\en{j_{n+1}}$ be such 
that $h_{n+1}
(x)=b_{j}^{\np}$. Then $f_{n+1}(x)=b_{j,\,0}^{\np}$, and so by definition 
of the set 
$E_{n+1,\,0}^{\np}$, $f_{n+1}(x)$ belongs to $E_{n+1,\,0}^{\np}$. 
Similarly, if 
$x\in X\setminus Q_{n+1}$, then $f_{n+1}(x)$ belongs to 
$E_{n+1,\,1}^{\np}$. This 
proves property (3c).
\par\smallskip 
\textbf{Step 10:}
We now have to check properties (4) and (5). We 
have $g_{n+1}(x)=f_{n}(x)$ for every $x\in B$ and $\mu (B)>1-\eta$ (the set $B$ has been defined in Step 1). If 
$\gamma 
>0$ 
is an 
arbitrarily small positive number, the numbers $c_{l,\,k}^{\np}$, 
$b_{j,\,0}^{\np}$ and $b_{j,\,1}^{\np}$ have been chosen so close to 
$c_{l}^{\np}$ and $b_{j}^{\np}$ respectively that 
$||f_{n+1}-g_{n+1}||_{\infty 
}<\gamma $, so that in particular $$|f_{n+1}(x)-f_{n}(x)|<\gamma \quad \textrm{ for 
every }
x\in B.$$ Moreover,
\begin{eqnarray*}
 ||f_{n+1}-f_{n}||_{\infty }&\le& ||f_{n+1}||_{\infty }+||f_{n}||_{\infty }\\
 &\le& ||g_{n+1}||_{\infty }
 +\gamma +||f_{n}||_{\infty }\\
 &\le&
 ||f_{n}||_{\infty }+\max_{|k|\le 
d_{n+1}}|a_{k}^{\np}|+\gamma +||f_{n}||_{\infty }\\
&\le&
 2\bigl(||f_{n}||_{\infty }+\max_{|k|\le 
d_{n+1}}|a_{k}^{\np}|\bigr)
\end{eqnarray*}
if $\gamma <\max_{|k|\le 
d_{n+1}}|a_{k}^{\np}|$. The quantity on the righthand side depends only on the 
construction until step $n$ and on the vector $u_{n+1}$, but not on the 
rest of the construction at step $n+1$. In particular, it does not depend 
on 
$\gamma $ nor on  $\eta $. We have
\[
\int_{X}||f_{n+1}(x)-f_{n}(x)||^{2}d\mu (x)<\gamma ^{2}+\mu (X\setminus 
B)\,||f_{n+1}-f_{n}||_{\infty }^{2}\le \gamma ^{2}+\eta 
\,||f_{n+1}-f_{n}||_{\infty }^{2}.
\]
If both $\gamma $ and $\eta $ are chosen sufficiently small, we can ensure 
that 
for instance $$||f_{n+1}-f_{n}||_{\LL{2}(X,\bb,\mu )}<2^{-\np}.$$ This 
proves 
 (5a).
\par\smallskip 
Let us now estimate, for $x\in X$,
\[
||\Phi _{f_{n+1}}(x)-\Phi _{f_{n}}(x)||=\Bigl|\!\Bigl|\sum_{k\in\Z}
\bigl(f_{n+1}(\expo{T}{k}x)-f_{n}(\expo{T}{k}x) 
\bigr)\expo{A}{-k}z_{0}\Bigr|\!\Bigr|.
\]
The series $\sum_{k\in\Z}\expo{A}{-k}z_{0}$ being unconditionally 
convergent in 
 $Z$, there exists for every $\rho >0$ a positive integer $k_{\rho }$ such that, 
for 
every bounded sequence $(a_{k})_{k\in\Z}$ of complex numbers,
$$
 \Bigl|\!\Bigl|\sum_{|k|\ge k_{\rho}}a_{k}\,\expo{A}{-k}z_{0}\Bigr|\!\Bigr|\le 
\rho 
\,\sup_{|k|\ge k_{\rho}}|a_{k}|.$$
So
\begin{align*}
||\Phi _{f_{n+1}}(x)-\Phi _{f_{n}}(x)||&\le
\sup_{|k|<k_{\rho 
}}\bigl|f_{n+1}(\expo{T}{k}x)-f_{n}(\expo{T}{k}x)\bigr|\,.\,
\sum_{|k|<k _{\rho }}||\expo{A}{-k}z_{0}||\\
&+\rho \,.\,\sup_{|k|\ge 
k_{\rho}}\bigl|f_{n+1}(\expo{T}{k}x)-f_{n}(\expo{T}{k}x)\bigr|\\
&\le C_{\rho 
}\sup_{|k|<k_{\rho}}\bigl|f_{n+1}(\expo{T}{k}x)-f_{n}(\expo{T}{k}x)
\bigr|+\rho \,||f_{n+1}-f_{n}||_{\infty},
\end{align*}
where $C_{\rho}=\sum_{|k|<k_{\rho }}||\expo{A}{-k}z_{0}||$. We have seen 
already that $||f_{n+1}-f_{n}||_{\infty}$ does not depend on the 
quantities 
introduced at step $n+1$ of the construction, so let us fix $\rho =
\rho _{n+1}>0$ so small that $\rho _{n+1}
||f_{n+1}-f_{n}||_{\infty}<\gamma $. Then $k_{n+1}=k_{\rho _{n+1}}$
depends on the construction until step $n$, but not on $\eta $. We set
$$B_{n+1}=\bigcap_{|k|<k_{n+1}}\expo{T}{-k}B.$$ Then $\mu 
(B_{n+1})>1-(2k_{n+1}-1)\eta $ which can be made as close to 
$1$ as we wish provided $\eta $ is small enough. We also have
$$\sup_{|k|<k_{n+1}} 
\bigl|f_{n+1}(\expo{T}{k}x)-f_{n}(\expo{T}{k}x)\bigr|<\gamma \quad \textrm{ for every }
x\in B_{n+1}.$$ It follows that for every $x\in 
B_{n+1}$, $||\Phi _{f_{n+1}}(x)-\Phi _{f_{n}}(x)||\le C_{\rho 
_{n+1}}\,.\,\gamma +\gamma $. So if $\gamma '$ is any positive 
number, we can ensure by taking $\gamma $ sufficiently small 
that $$||\Phi _{f_{n+1}}(x)-\Phi _{f_{n}}(x)||\le \gamma '\quad  \textrm{ for 
every } x\in B_{n+1}.$$ Thus if $\gamma _{n+1}$ and $\eta_{n+1}$ are any fixed positive numbers, taking $\gamma $ and $\eta$ sufficiently small yields that properties (4a), (4b) and (4c) are true.
\par\smallskip
Also, we have
\begin{align*}
 \int_{X}||\Phi _{f_{n+1}}(x)-\Phi_{f_{n}}(x)||^{2}d\mu 
(x)&<\gamma'{} ^{2}+\mu (X\setminus 
B_{n+1})\,\bigl(C_{\rho_{n+1}}+\rho _{n+1} \bigr)^{2}
\,.\,||f_{n+1}-f_{n}||_{\infty}^{2}\\
&\le \gamma '{}^{2}+(2k_{n+1}-1)\,\eta \,\bigl(C_{\rho_{n+1}}+\rho 
_{n+1} \bigr)\,.\,||f_{n+1}-f_{n}||_{\infty}^{2}.
\end{align*}
Since $\eta $ can be chosen as small as we wish compared to 
$k_{n+1}$, $\rho _{n+1}$, and $||f_{n+1}-f_{n}||_{\infty}$, 
we can make the bound above as small as we wish provided $\gamma 
$ and $\eta $ are sufficiently small. So we can in particular 
ensure that $$||\Phi _{f_{n+1}}-\Phi_{f_{n}}||_{\LL{2}(X,\bb,\mu 
;Z )}<2^{-(n+1)},$$ which is (5b).
\par\smallskip 
\textbf{Step 11:}
It remains to construct the sets $G_{i}^{\np}$, 
$i\in\en{n+1}$, in such a way that property (6) holds true.
\par\smallskip 
We know that for every $i\in\en{n}$ and every $x\in G_{i}^{(n)}$, $\Phi 
_{f_{n}}(x)$ belongs to $ U_{i} $, i.e.\ $||\Phi 
_{f_{n}}(x)-u_{i}||<r_{i}$, and that $\mu (G_{i}^{(n)})\ge \delta _{i}(1+2^{-n})$.  Let $0<\kappa <r_{i}$ be so small that for 
every $i\in\en{n}$,
\[
\mu \bigl(\bigl\{x\in G_{i}^{(n)} \textrm{ ; } ||\Phi 
_{f_{n}}(x)-u_{i}||<r_{i}-\kappa \bigr\}\bigr)\ge\delta 
_{i}\bigl(1+\frac{3}{4}2^{-n} \bigr) .
\]
This number $\kappa $ only depends on the construction until step $n$.
We set for $i\in\en{n}$
\[
G_{i}^{\np}=\bigl\{x\in G_{i}^{(n)} \textrm{ ; } ||\Phi 
_{f_{n}}(x)-u_{i}||<r_{i}-\kappa\bigr\}\cap B_{n+1}
\]
and $G_{n+1}^{\np}=E$. Then obviously $G_{i}^{\np}\subseteq 
G_{i}^{(n)}$ for $i\in\en{n}$ and 
\[
\mu (G_{i}^{\np})\ge \delta 
_{i}\,(1+\frac{3}{4} 2^{-n})-(2k_{n+1}-1)\,\eta \ge \delta 
_{i}\,(1+2^{-\np})
\]
if $\eta $ is small enough. Also $\mu (G_{n+1}^{\np})=\mu (E)\ge
\delta _{n+1}(1+2^{-\np})$ if $\delta _{n+1}$ is small enough. 
So properties (6a) and (6b) are 
true. 

If $i\in\en{n}$ and $x\in G_{i}^{\np}$, then $||\Phi 
_{f_{n}}(x)-u_{i}||<r_{i}-\kappa$. Also $x\in B_{n+1}$ so that 
$||\Phi 
_{f_{n+1}}(x)-\Phi 
_{f_{n}}(x)||<\gamma '$. If $\gamma '$ is chosen less than 
$\kappa $ (which is possible since $\kappa$ depends only on the construction until step $n$), we have $||\Phi 
_{f_{n+1}}(x)-u_{i}||<r_{i}$, i.e.\ $\Phi _{f_{n+1}}(x)\in U_{i}$. 
If $i=n+1$, and $x\in G_{n+1}^{\np}=E$, then 
$\expo{T}{k}x\in\expo{T}{k}E$ for every $k\in\Z$. Hence 
\[
\Phi _{g_{n+1}}(x)=\sum_{|k|\le d_{n+1}}a_{k}^{\np}\,\expo{A}{-k}z_{0}
+\sum_{|k|> N}g_{n+1}(\expo{T}{k}x)\,\expo{A}{-k}z_{0}
\]
by the definition of the function $g_{n+1}$.
Thus $$||\Phi _{g_{n+1}}(x)-u_{n+1}||\le \bigl|\bigl|\sum_{|k|> 
N}g_{n+1}(\expo{T}{k}x)\,\expo{A}{-k}z_{0}\bigr|\bigr|.$$ Let now be $\rho 
>0$ such that $\rho \,||g_{n+1}||_{\infty }<r_{n+1}/2$. Since $$||g_{n+1}||_{\infty}\le ||f_{n}||_{\infty}+\max_{|k|\le d_{n+1}}|a_{k}^{(n)}|,$$ the number $\rho $ only 
depends on the construction until step $n$, and we can choose $N$ so large 
that $N>k_{\rho }$. Then we have for every $x\in E$
\[
||\Phi _{g_{n+1}}(x)-u_{n+1}||\le\rho \,\sup_{|k|>N}|g_{n+1}(\expo{T}
{k}x)|\le \rho \,||g_{n+1}||_{\infty }<\frac{r_{n+1}}{2}\cdot
\]
Recall that there exists a positive constant $C$ such that 
$$\bigl|\bigl|\sum_{k\in 
\Z}a_{k}\,\expo{A}{-k}z_{0}\bigr|\bigr|\le C\,\sup_{k\in\Z}|a_{k}|$$ for 
all bounded sequences $(a_{k})_{k\in\Z}$ of complex numbers. Since 
$||f_{n+1}-g_{n+1}||_{\infty}<\gamma $, we can assume by taking 
$\gamma $ sufficiently small that $||f_{n+1}-g_{n+1}||_{\infty 
}<r_{n+1}/(2C)$. Then we have, for every $x\in X$,
$$||\Phi _{f_{n+1}}(x)-\Phi _{g_{n+1}}(x)||\le C\,||f_{n+1}-g_{n+1}||_{\infty 
}<\frac{r_{n+1}}{2}\cdot$$ Hence $||\Phi _{f_{n+1}}(x)-u_{n+1}||<r_{n+1}$ for every 
 $x\in E$, i.e.\ $\Phi _{n+1}(x)$ belongs to $ U_{n+1}$ for every $x\in E=G_{n+1}
 ^{\np}$. Thus property (6c) is satisfied.
 \par\smallskip 
 This finishes the construction by induction of the functions $f_{n}$.
 
 \subsection{Construction of the isomorphism $\Phi $ and proof of Theorem
 \ref{Theo1}}\label{SousSec2c}
 By property (5b), the sequence $(\Phi _{f_{n}} )_{n\ge 0}$ 
converges in $\LL{2}(X,\bb,\mu ;Z)$ to a function 
$\Phi$ which belongs to $\LL{2}(X,\bb,\mu ;Z)$. Our aim is now to prove  that the probability
measure $\nu $ on $Z$ defined by $\nu (B)=\mu (\Phi ^{-1}(B))$ for any Borel 
subset $B$ of $Z$ has full support, and that $\Phi $ is an isomorphism 
between the two dynamical systems $(X,\bb,\mu ;T)$ and $(Z,\bb_{Z},\nu 
;A)$.
\par\smallskip 
Observe that the property (5a) of the 
sequence of functions $(f_{n})_{n\ge 0 }$ implies that $(f_{n})_{n\ge 0}$ 
converges in $\LL{2}(X,\bb,\mu )$ to a certain function 
$f\in\LL{2}(X,\bb,\mu )$. Obviously, $f$ does not belong to $\LL{\infty 
}(X,\bb,\mu  )$, and thus it makes no sense to speak of the map $\Phi 
_{f}$ (this is a difference with what happens in the construction of 
\cite{GW}). But a link between $\Phi $ and $f$ can be obtained thanks to 
the assumption (b) of Theorem \ref{Theo1}: for $\mu$-almost every $x\in X$ 
and every $n\ge 0$ we have
$\pss{z_{0}^{*}}{\Phi _{f_{n}}(x)}=\sum_{p\in 
F}c_{p}\,f_{n}(\expo{T}{p}x)$, where the set $F$ is finite. Since there 
exists a strictly increasing sequence $(n_{k})_{k\ge 0}$ of integers such 
that $\Phi _{f_{n_{k}}}\longrightarrow\Phi $ and $f_{n_{k}}\longrightarrow 
f$ $\mu $-almost everywhere, it follows that 
$$\pss{z_{0}^{*}}{\Phi(x)}=\sum_{p\in 
F}c_{p}\,f(\expo{T}{p}x)\quad \mu \textrm{-almost everywhere}.$$
\par\smallskip 
$\bullet$ \textbf{Let us first show that $\nu $ has full support.} By property 
(4c) we have that for every $n\ge 0 $ and 
every $x\in \widetilde{B}_{n}=\bigcap_{k\ge n}B_{k}$,
$||\Phi _{f_{k}}(x)-\Phi _{f_{k-1}}(x)||<\gamma _{k}$ for every $k\ge n$. Observe that 
$$\mu (\widetilde{B}_{n})\ge 1-\sum_{k\ge n}\eta _{k}>1-2\eta_{n}$$ if the sequence 
$(\eta_{n} )_{n\ge 0}$ decreases sufficiently fast, so that $\mu (\widetilde{B}_{n})\rightarrow 1$ as $n\rightarrow +\infty$. For every $n\ge 0$ and 
every $x\in \widetilde{B}_{n}$ we have in particular
\[
\bigl|\bigl|\Phi _{f_{k}}(x)-\Phi _{f_{n}}(x)\bigr|\bigr|\le
\sum_{j=n+1}^{k}\bigl|\bigl|\Phi _{f_{j}}(x)-\Phi _{f_{j-1}}(x)\bigr|\bigr|
\le \sum_{j\ge n+1}\gamma _{j}<2\gamma _{n+1}
\]
if the sequence $(\gamma _{k})_{k\ge 0}$ is sufficiently rapidly 
decreasing. As $(\Phi  _{f_{k}})_{k\ge 0}$ converges to $\Phi $ 
in $\LL{2}(X,\bb,\mu ;Z)$, there exists a subsequence of $(\Phi  
_{f_{k}})_{k\ge 0}$ which converges to $\Phi $ $\mu $-almost everywhere on 
 $X$ and thus we have: 
 $$
 \textrm{for } \mu \textrm{-almost every } x\in \widetilde{B}_{n},\quad
 ||\Phi (x)-\Phi _{f_{n}}(x)||\le 2\gamma _{n+1}.
 $$
 Let us now fix $i\ge 
0$. By property (6c), we have that, for 
every $n\ge i$ and every $x\in G_{i}^{(n)}$, $||\Phi_{f_{n}}(x)-u_{i} 
||<r_{i}$. Let $$G_{i}=\bigcap_{n\ge i}G_{i}^{(n)}.$$ As the sequence of sets 
$\bigl( G_{i}^{(n)}\bigr)_{n\ge i}$ is decreasing by (6b), we have $\mu (G_{i})=\lim_{n\to+\infty }\mu 
(G_{i}^{(n)})$ so that $\mu (G_{i})\ge\delta _{i}>0$ by property (6a). Let now $n\ge i$ be so large that 
$\mu (G_{i}\cap\widetilde{B}_{n})>0$. If $x\in G_{i}\cap 
\widetilde{B}_{n}$, then 
\[
||\Phi (x)-u_{i}||\le||\Phi (x)-\Phi _{f_{n}}(x)||+||\Phi 
_{f_{n}}(x)-u_{i}||<2\gamma _{n+1}+r_{i}.
\]
Hence $\nu \bigl(B(u_{i},r_{i}+2\gamma _{n+1}) \bigr)>0$ for all $n$ 
sufficiently large, so that it follows in particular that $\nu \bigl(B(u_{i},2r_{i}) 
\bigr)>0$. This being true for all $i\ge 0$, the measure $\nu$ has full 
support.
\par\smallskip 
Let us observe at this point that the measure $\nu $ admits a moment of 
order $2$. Indeed
\[
\int_{Z}||z||^{2}\,d\nu (z)=\int_{X}||\Phi (x)||^{2}\, d\mu (x)<+\infty 
\]
since $\Phi \in\LL{2}(X,\bb,\mu ;Z)$.
\par\smallskip 
$\bullet$ \textbf{It remains to show that $\Phi $ is an isomorphism between 
$(X,\bb,\mu ;T) $ and $(Z,\bb_{Z},\nu ;A)$.} For this it suffices to prove 
(see for instance \cite[p.\,59--60]{Wa}) that the two transformations $T$ 
and $A$ are conjugated via the map $\Phi $. First of all, we need to check 
that $\Phi (Tx)=A\Phi (x)$ for $\mu $-almost every $x\in X$. Since $\Phi 
_{f_{n}}$ tends to $\Phi $ in $\LL{2}(X,\bb,\mu ;Z)$ as $n$ tends to 
infinity, there exists a subsequence $\bigl(\Phi _{f_{n_{k}}} \bigr)
_{k\ge 0 }$ of $\bigl(\Phi _{f_{n}}\bigr)$ which tends  to $\Phi $ 
$\mu $-almost everywhere. As $\Phi _{f_{n}}(Tx)=A\Phi _{f_{n}}(x)$ 
$\mu $-almost everywhere for each $n\ge 0$, it follows that 
$\Phi (Tx)=A\Phi (x)$ $\mu $-almost everywhere.
\par\smallskip 
The second point is to check that for every subset $Q$ of $X$, $Q\in \bb$, there exists a subset $B$ of $Z$, $B\in 
\bb_{Z}$, such that $\mu \bigl(Q\vartriangle \Phi ^{-1}(B) \bigr)=0$.
So let $Q\in\bb$. Suppose that we are able to exhibit a Borel subset $C$
of $\C$ such that $Q=\bigl\{x\in X;\ \sum_{p\in 
F}c_{p}\,f(\expo{T}{p}x)\in C\bigr\}$ up to a set of measure zero. Setting 
$B=\bigl\{z\in Z;\ \pss{z_{0}^{*}}{z}\in C\bigr\}$, and remembering that
$\pss{z_{0}^{*}}{\Phi(x)}=\sum_{p\in 
F}c_{p}\,f(\expo{T}{p}x)$ $\mu $-almost everywhere,
we obtain that $B$ is a 
Borel subset of $Z$ such that $\Phi ^{-1}(B)=Q$ (up to a set of $\mu $-measure zero). So it suffices to find 
$C$ with the property above.
\par\smallskip 
Let us first introduce some notation: for each $n\ge 0$ we define the 
function $F_{n}$ on $X$ by setting $F_{n}(x)=\sum_{p\in F}c_{p}\,
f_{n}(\expo{T}{p}x)$ and the function $F$ by setting $F(x)=\sum_{p\in 
F}c_{p}\,f(\expo{T}{p}x)$. For every $i\ge 0$, let 
$$C_{i,\,0}=\bigcap_{n\ge i}F_{i,\,0}^{(n)}\quad \textrm{and} \quad
C_{i,\,1}=\bigcap_{n\ge i}F_{i,\,1}^{(n)}.$$ These are Borel subsets of 
$\C$. Recall that $J_{i}=\{j\ge i;\ Q_{j}=Q_{i}\}$ is supposed to be 
infinite. Let 
\[
\Gamma _{i,\,0}=\bigcup_{j\ge 0}\bigcap_{\genfrac{}{}{0pt}{}{k\ge 
j}{k\in J_{i}}}C_{k,\,0}\quad \textrm{and}\quad
\Gamma _{i,\,1}=\bigcup_{j\ge 0}\bigcap_{\genfrac{}{}{0pt}{}{k\ge 
j}{k\in J_{i}}}C_{k,\,1}.
\]
These two sets are Borel in $\C$. Moreover $\Gamma _{i,\,0}\cap \Gamma 
_{i,\,1}=\varnothing$. Indeed, if it is not the case, we have
\[
\Bigl( \bigcap_{\genfrac{}{}{0pt}{}{k\ge j_{1}}{k\in 
J_{i}}}C_{k,\,0}\Bigr)\cap \bigl( \bigcap_{\genfrac{}{}{0pt}{}{k\ge 
j_{2}}{k\in J_{i}}}C_{k,\,1}\bigr)\neq\varnothing
\]
for some integers $j_{1}$ and $j_{2}$. In particular, if $k\in J_{i}$ is such 
that $k\ge \max(j_{1},j_{2})$ (and such a $k$ does exist because $J_{i}$ is infinite), $C_{k,\,0}\cap C_{k,1}\neq \varnothing$. So 
for all $n\ge k$, $F_{k,\,0}^{(n)}\cap F_{k,\,1}^{(n)}\neq\varnothing$, 
which is a contradiction with (2c).
\par\smallskip 
For every $i\ge 0$, we consider the subsets of $X$ 
\begin{align*}
 D_{i}&=\bigcap_{p\in F}\Bigl(\bigcap_{k\ge i}\expo{T}{-p}B_{k} 
 \Bigr)\cap\Bigl( \bigcap_{k\ge i}C_{k}\Bigr)\\
 H_{i}&=\bigcup_{n\ge i}H_{i}^{(n)}\\
 \Omega _{i}&=\bigcap_{k\ge i}\bigl(D_{k}\setminus H_{k} \bigr)\\
 \Omega &=\bigcup_{i\ge 0}\Omega _{i}.
\end{align*}
We have $\mu (D_{i})\ge 1-(2d+2)\sum_{k\ge i}\eta _{k}\ge 1-4(d+1)\eta 
_{i}$ if the sequence $(\eta _{i})_{i\ge 0}$ decreases sufficiently fast. 
Also, (3a) and (3b) imply that $\mu 
(H_{i})\le \alpha  _{i}$ so that
\[
\mu (\Omega _{i})\ge 1-\sum_{k\ge i}\bigl(\mu (X\setminus D_{k})
+\mu (H_{k}) \bigr)\ge 1-4(d+1)\sum_{k\ge i}\eta _{k}-\sum_{k\ge i}\alpha  
 _{k}\ge 1-8(d+1)(\alpha  _{i}+\eta _{i})
\]
if the sequences $(\eta _{i})_{i\ge 0}$ and $(\alpha  _{i})_{i\ge 0}$ 
decrease to zero sufficiently fast. Thus $(\Omega _{i})_{i\ge 0}$ is an increasing 
sequence of sets such that $\mu (\Omega _{i})\longrightarrow 1$ as 
$i\longrightarrow +\infty $. So $\mu (\Omega )=1$.
\par\smallskip 
For every $n\ge 0$, every $x\in D_{n}$, every $p\in F$, and every $k\ge 
n$, by property (4b), 
$|f_{k+1}(\expo{T}{p}x)-f_{k}(\expo{T}{p}x)|<\gamma _{k+1}$. Hence 
$|F_{k+1}(x)-F_{k}(x)|\le(\sup_{p\in F}|c_{p}|)\,\gamma _{k+1}$.
Since $(F_{k})_{k\ge 0}$ converges to $F$ in $L^{2}(X,\mathcal{B}, \mu )$, there exists a subsequence $(F_{k_{j}})_{j\ge 0}$ of $(F_{k})_{k\ge 0}$ which converges to $F$ $\mu $-almost everywhere. Hence for ($\mu $-almost) every $x\in D_{n}$ we have
$$|F(x)-F_{n}(x)|\le\sup_{p\in F}|c_{p}|\sum_{k\ge n}\gamma _{k+1}<\beta _{n}$$ if 
$\gamma _{n+1},\gamma _{n+2},\dots$ are chosen at steps $n+1,\,n+2,\dots$ 
sufficiently small with respect to $\beta _{n}$. Thus $F(x)\in D(F_{n}
(x),\beta _{n})$ for every $x\in D_{n}$.
\par\smallskip 
After these preliminaries, our aim is now to show that for every $i\ge 0$, 
$F^{-1}(\Gamma _{i,\,0})=Q_{i}$ and $F^{-1}(\Gamma 
_{i,\,1})=X\setminus Q_{i}$. So, let us fix $i\ge 0$ and $k\in J_{i}$. 
Suppose that $x\in Q_{i}\cap\, \Omega _{k}$. Then for every $n\ge k$, 
$x\in Q_{i}\cap (D_{k}\setminus H_{k})\cap(D_{n}\setminus H_{n})$ so that
$
x\in Q_{i}\cap\bigl(D_{n}\setminus H_{k}^{(n)} \bigr)=Q_{k}\cap
(D_{n}\setminus H_{k}^{(n)} \bigr)
$. By property (3c), $f_{n}(x)\in
E_{k,\,0}^{(n)}$. Since $x\in D_{n}\subseteq C_{n}$, this implies that $F_{n}(x)\in 
 D_{k,\,0}^{(n)}$. It follows that for every $n\ge k$, $F(x)\in 
 F_{k,\,0}^{(n)}$, so that $F(x)\in\bigcap_{n\ge k}F_{k,\,0}^{(n)}
 =C_{k,\,0}$. We have thus proved that if $k\in J_{i}$ and $x\in Q_{i}\cap \Omega _{k}$, 
then $F(x)\in C_{k,\,0}$. 
\par\smallskip 
Let now $j\ge 0$.
If $k\ge j$ and $k\in J_{i}$, then, since $\Omega _{j}\subseteq \Omega 
_{k}$, we have $Q_{i}\cap \Omega _{j}\subseteq Q_{i}\cap \Omega _{k}$. It follows 
that if $x\in Q_{i}\cap \Omega _{j}$, 
$F(x)\in C_{k,\,0}$ for every $k\in J_{i}$, $k\ge j$, and so
$$F(x)\in\bigcap _{\genfrac{}{}{0pt}{}{k\ge j}{k\in 
J_{i}}}C_{k,\,0}.$$
Suppose now that 
$x\in Q_{i}\cap\Omega $: there exists a $j\ge 0$ such that $x\in Q_{i}\cap \Omega _{j}$. Hence
$$F(x)\in\bigcup_{j\ge 0 }\bigcap _{\genfrac{}{}{0pt}{}{k\ge j}{k\in 
J_{i}}}C_{k,\,0}= \Gamma 
_{i,\,0 }.$$
In exactly the same way, if $x\in(X\setminus Q_{i})\cap\Omega $
then $F(x)\in \Gamma_{i,\,1} $. Since $\Gamma_{i,\,0}\cap \Gamma_{i,\,1}=
\varnothing$ and $\mu (\Omega )=1$, we have proved that
\[
F^{-1}(\Gamma _{i,\,0})=Q_{i},\quad\textrm{and}\quad F^{-1}(\Gamma 
_{i,\,1})=X\setminus Q_{i}
\]
up to sets of $\mu $-measure zero. The proof is now nearly finished. Let
\begin{align*}
 \mathcal{Q}=\bigl\{Q\in\bb;\ \textrm{there exists a Borel subset}\ 
C\ \textrm{of}\ \C\textrm{ such that } \mu(Q\vartriangle F^{-1}(C))=0\bigr\}\cdot
\end{align*}
Then $\mathcal{Q}$ is a $\sigma $-algebra which contains all the sets 
$Q_{i}$, $i\ge 0$, and these sets generate $\bb$. Thus $\mathcal{Q}=\bb$. This 
finishes the proof that $\Phi $ is an isomorphism of dynamical systems, 
and Theorem \ref{Theo1} is proved.

\subsection{Proof of Theorem \ref{Theo1bis}}\label{SousSec2d} The proof of 
Theorem 
\ref{Theo1bis} is exactly similar in spirit, but some technical points 
need to be adjusted. We briefly list the most important ones. If $(X,
\bb,\mu ;T)$ is an ergodic dynamical system, which is not necessarily 
invertible, the maps $\Phi _{f}$, $f\in\LL{\infty }(X,\bb,\mu)$ are 
defined as $\Phi _{f}(x)=\sum_{k\ge 1}f(\expo{T}{k}x)\,\expo{A}{-k+r}z_{0}$ 
for $\mu $-almost every $x\in X$, where $r\in\Z$ is such that $A^{r}z_{0}=0$. Then 
\[
\Phi _{f}(Tx)=\sum_{k\in\Z}f(\expo{T}{k}x)\,\expo{A}{-k+1+r}z_{0}=\sum_{k\ge 
1}f(\expo{T}{k}x)\,\expo{A}{-k+1+r}z_{0}
\]
since $A^{r}z_{0}=0$. Hence $\Phi _{f}(Tx)=A\,\Phi _{f}(x)$ $\mu $-almost 
everywhere. As in the proof of Theorem \ref{Theo1}, we can suppose by shifting the sequence $(z_{n})_{n\in\Z}$ that $0\in F$ and that there exists a non-zero functional $z_{0}^{*}\in Z^{*}$ such that $c_{n}=\pss{z_{0}^{*}}{A^{-n+r}z_{0}}$ is non-zero \ifff\ $n\in F$.  The vectors $u_{n}$ are defined as $$u_{n}=\sum_{k=0}^{d_{n}}
a_{k}^{(n)}\,\expo{A}{-k+r}z_{0}, \quad a_{k}^{(n)}\in\C.$$ The set $E$  and the integer $N$ are 
chosen in such a way that $\mu (E)>0$, $\mu \bigl(\bigcup_{k=0}^{N}T^{-k}E \bigr)<\eta $,
and the sets $\expo{T}{-k}E$ are pairwise disjoint. This entails a 
modification of the definition of the function $g_{n+1}$:
\[
g_{n+1}(x)=
\begin{cases}
 a_{k}^{\np}&\quad\textrm{if}\ x\in\expo{T}{-N+k}E,\quad 0\le k\le d_{n+1}
\\
0&\quad\textrm{if}\ x\in\expo{T}{-N+k}E,\quad d_{n+1}<k\le N\\
f_{n}(x)&\quad\textrm{if}\ x\in X\setminus\bigcup_{k=0}^{N}\expo{T}{-k}E.
\end{cases}
\]

The set $E'$ and the integer $M$ are chosen so that $\mu \bigl(
\bigcup_{k=0}^{M-d}\expo{T}{-k}E'\bigr)<1-\eta $ and the sets 
$\expo{T}{-k}E'$, 
$0\le k\le M$, are pairwise disjoint (where $d=\max |F|$). The set $G_{n+1}^{\np}$ needs to be 
defined as $G_{n+1}^{\np}=\expo{T}{-N}E$. Up to these modifications, the 
proof is the same, and we leave the details to the reader.

\subsection{Some remarks}\label{SousSec2e}
Now we have carried out the proofs of these two results, a few remarks are 
in order.

\begin{remark}\label{Remarque10} We have seen during the proofs of 
Theorem \ref{Theo1} and \ref{Theo1bis} that the measures $\nu $ 
constructed on $Z$ admit a second order moment. Thus operators satisfying 
the assumptions of these theorems are $2$-universal for (invertible) 
ergodic systems in the sense of Definition \ref{Definition10} (see Section \ref{Sec5}). It is not difficult to check that one can ensure that all the measures $\nu $ 
actually admit moments of all orders.
\end{remark}

\begin{remark}\label{Remarque11} If $A$ is an invertible operator on 
$Z$, then it cannot be universal for all ergodic systems: indeed, suppose 
that $(X,\bb,\mu ;T)$ is a dynamical system which is isomorphic to
$(Z,\bb_{Z},\nu ;A)$ for some $A$-invariant measure $\nu $ with full 
support. Then $\nu $ is $A^{-1}$-invariant too. If 
$\Phi :X\longrightarrow Z$ is an isomorphism between $(X,\bb,\mu ;T)$
and $(Z,\bb_{Z},\nu ;A)$ then set $S=\Phi ^{-1}A^{-1}\Phi $. The measure 
$\mu $ is $S$-invariant, and since $T=\Phi ^{-1}A\Phi $, we have 
$TS=ST=\textrm{id}_{X}$, so that $T$ is invertible. Thus an invertible 
operator may only be universal for invertible ergodic systems.
\par\smallskip
If $A$ is a universal operator for ergodic systems and $(X,\bb,\mu ;T)$ is an invertible ergodic system, there exists a probability measure $\nu $ on $Z$ such that $(X,\bb,\mu ;T)$ and $(Z,\bb_{Z},\nu ;A)$ are isomorphic. Hence $A$ is invertible as a \mea-preserving transformation of $(Z,\bb_{Z},\nu)$, although, as observed above, it is not invertible as a topological map from $Z$ into itself. There is no contradiction in this: there indeed exists a Borel subset $M$ of $Z$ with $A(M)= M$ and $\nu (M)=1$ such that $A\,:\, M\To M$ is an invertible \mea-preserving transformation, i.e. there exists a map $S\,:\, M\To M$ which is measurable and \mea-preserving such that $AS(x)=SA(x)=x$ for every $x\in M$. But $M$ being distinct from the whole space $X$, and $S$ not necessarily uniformly continuous on $M$, there is no reason why $S$ should extend into a topological inverse of $A$ on $Z$.
\end{remark}

\begin{remark}\label{Remarque12} The ergodicity of the system $(X,
\bb,\mu ;T)$ has been used in the proofs of Theorems \ref{Theo1} and 
\ref{Theo1bis} only via the Rokhlin lemma. So we could have supposed 
instead that the system was aperiodic, and the operators $A$ of Theorems 
\ref{Theo1} and \ref{Theo1bis} are in fact universal for (invertible) aperiodic systems.
\end{remark}
\par\smallskip 
We finish this section with a generalization of Theorems \ref{Theo1} and 
\ref{Theo1bis}, in which assumption (a) is relaxed. This generalization is 
useful for proving the universality of some operators (see Section 
\ref{Sec4}) for which the (bi)cyclicity assumption (a) is not easily seen 
to hold true.
\begin{theorem}\label{Theo1-3}
 Let $A\in\bb(Z)$ be a bounded operator on $Z$. Suppose that there exist 
sequences $\bigl(z_{n}^{(\iota)} \bigr)_{n\in\Z}$ of vectors of $Z$, $\iota\in I$,
where either $I=\en{N}$ for some integer $N\ge 0$, or $I=\Z^{+}$, such 
that $Az_{n}^{(\iota)}=z_{n+1}^{(\iota)}$ for every $\iota\in I$ and $n\in\Z$. Suppose that 
the following three properties hold true:
\par\smallskip
\begin{enumerate}
 \item [\emph{(a')}] $\overline{\vphantom{(}\emph{span}}\,\bigl[
 A^{-n}z_{0}^{(\iota)};\ n\in\Z,\ \iota\in I\bigr]=Z$;
 \par\smallskip
 \item[\emph{(b')}] there exists a finite subset $F$ of $\Z$ such that
 \[
\overline{\vphantom{(}\emph{span}}\,\bigl[
 \{A^{-n}z_{0}^{(0)};\ n\in \Z\setminus 
F\}\cup\{A^{-n}z_{0}^{(\iota)};\ n\in\Z,\ \iota\in I\setminus\{0\}\}\bigr]\neq 
Z;
\]
\par\smallskip
\item[\emph{(c')}] the series $\sum_{n\in\Z}A^{-n}z_{0}^{(\iota)}$, $\iota\in I$, 
are unconditionally convergent in $Z$.
\end{enumerate}
\par\smallskip
Then $A$ is universal for invertible ergodic systems.
\end{theorem}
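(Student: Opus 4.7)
The natural generalization is to allow a vector of functions $\mathbf{f}=(f_{\iota})_{\iota\in I}$ in place of a single $f$, and define
\[
\Phi_{\mathbf{f}}(x)=\sum_{\iota\in I}\sum_{k\in\Z}f_{\iota}(\expo{T}{k}x)\,\expo{A}{-k}z_{0}^{(\iota)}.
\]
By assumption (c') each inner series converges unconditionally, and by rescaling $z_{0}^{(\iota)}\mapsto\varepsilon_{\iota}z_{0}^{(\iota)}$ for $\iota\neq 0$ with a fast-decaying sequence $(\varepsilon_{\iota})$ (which preserves (a'), (b'), (c')), the outer sum over $\iota$ is absolutely convergent whenever the $\|f_{\iota}\|_{\infty}$ stay bounded. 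The computation of Subsection \ref{SousSec2a} carries over and gives $\Phi_{\mathbf{f}}(Tx)=A\,\Phi_{\mathbf{f}}(x)$.

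\textbf{The role of (b').} Choose, using (b'), a non-zero $z_{0}^{*}\in Z^{*}$ annihilating $A^{-n}z_{0}^{(0)}$ for every $n\in\Z\setminus F$ and \emph{all} of the vectors $A^{-n}z_{0}^{(\iota)}$ for $\iota\in I\setminus\{0\}$. After the same shift as in Subsection \ref{SousSec2a}, one may assume $0\in F$ and $c_{n}=\pss{z_{0}^{*}}{A^{-n}z_{0}^{(0)}}\neq 0$ iff $n\in F$. The crucial identity becomes
\[
\pss{z_{0}^{*}}{\Phi_{\mathbf{f}}(x)}=\sum_{p\in F}c_{p}\,f_{0}(\expo{T}{p}x),
\]
depending only on the $\iota=0$ channel. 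This is what allows us to recover the $\sigma$-algebra of $X$ exactly as before.

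\textbf{Induction.} By (a'), fix a dense sequence $(u_{n})_{n\ge 1}$ of the form $u_{n}=\sum_{\iota\in I_{n}}\sum_{|k|\le d_{n}}a_{\iota,k}^{(n)}\,\expo{A}{-k}z_{0}^{(\iota)}$ with $I_{n}\subset I$ finite, and use balls $U_{n}=B(u_{n},r_{n})$ as basis of open sets. Build $\mathbf{f}^{(n)}=(f_{\iota}^{(n)})_{\iota\in I}$ inductively: the channel $f_{0}^{(n)}$ is constructed \emph{exactly} as the function $f_{n}$ of Theorem \ref{Theo1} (Subsections \ref{SousSec2b}--\ref{SousSec2c}), so that properties (1)--(6) hold for $f_{0}^{(n)}$ verbatim. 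At step $n+1$, for each $\iota\in I_{n+1}\setminus\{0\}$ and each $|k|\le d_{n+1}$, paste in the pulse $f_{\iota}^{(n+1)}(\expo{T}{k}x)=a_{\iota,k}^{(n+1)}$ on the base $E$ of the Rokhlin tower, keeping $f_{\iota}^{(n+1)}=f_{\iota}^{(n)}$ elsewhere; for $\iota\notin I_{0}\cup\cdots\cup I_{n+1}$, set $f_{\iota}^{(n+1)}=0$. This guarantees $\|\Phi_{\mathbf{f}^{(n+1)}}(x)-u_{n+1}\|<r_{n+1}$ on the set $G_{n+1}^{(n+1)}=E$, while the uniform norm of each channel remains bounded, so the analogues of (4)--(6) still go through.

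\textbf{Passage to the limit.} Taking $\Phi=\lim \Phi_{\mathbf{f}^{(n)}}$ in $\LL{2}(X,\bb,\mu;Z)$ and $\nu=\Phi_{*}\mu$, full support of $\nu$ follows exactly as in Subsection \ref{SousSec2c} from the approximation of each $u_{n}$. Since $\Phi(Tx)=A\Phi(x)$ a.e.\ is inherited from the intertwining, the only remaining point is that $\Phi$ induces an isomorphism of $\sigma$-algebras. But the scalar observable $F(x)=\pss{z_{0}^{*}}{\Phi(x)}=\sum_{p\in F}c_{p}f_{0}(\expo{T}{p}x)$ involves \emph{only} the $\iota=0$ channel, and the $f_{0}^{(n)}$ were built to be literally the $f_{n}$ of Theorem \ref{Theo1}; the argument of Subsection \ref{SousSec2c} that recovers each generator $Q_{i}$ as $F^{-1}(\Gamma_{i,0})$ modulo a null set then applies without any change.

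\textbf{Main obstacle.} The delicate point is keeping the total series over $\iota$ under control when $I=\Z_{+}$, without spoiling the careful tower construction in the $\iota=0$ channel. The rescaling $z_{0}^{(\iota)}\mapsto \varepsilon_{\iota}z_{0}^{(\iota)}$ together with the rule ``activate channel $\iota$ only once $\iota\in I_{n}$'' resolves this, because at every stage only finitely many channels have been touched and their uniform norms are bounded in terms of the finite set $\{|a_{\iota,k}^{(m)}|\}_{m\le n}$.
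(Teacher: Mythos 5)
Your overall strategy coincides with the paper's: run the Theorem \ref{Theo1} machinery on the $\iota=0$ channel so that $z_0^{*}$ recovers the $\sigma$-algebra from $f_0$ alone, and build auxiliary channel functions $f_{\iota}$, $\iota\ge 1$, pasted on Rokhlin towers so that the combined limit $\Phi$ approximates each $u_n$ and hence has full support. The role of (b') and the channel decomposition $u_n=\sum_{\iota}u_n^{(\iota)}$ are correctly identified, and the isomorphism argument does indeed reduce to the $\iota=0$ channel.

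The gap is in the convergence control of the sum over $\iota$. The rescaling $z_0^{(\iota)}\mapsto\varepsilon_{\iota}z_0^{(\iota)}$ is a no-op. The dense sequence $(u_n)$ consists of fixed vectors in $Z$; once $z_0^{(\iota)}$ is replaced by $\varepsilon_{\iota}z_0^{(\iota)}$, the coefficients in the expansion of $u_n$ over the rescaled vectors become $a_{\iota,k}^{(n)}/\varepsilon_{\iota}$, and these are precisely the pulse values pasted into $f_{\iota}$. Thus $\|f_{\iota}\|_{\infty}$ is multiplied by $1/\varepsilon_{\iota}$, and the product $\Phi^{(\iota)}(x)=\sum_k f_{\iota}(T^k x)A^{-k}z_0^{(\iota)}$ is exactly unchanged. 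The premise ``$\|f_{\iota}\|_{\infty}$ stays bounded'' therefore fails for the very reason you introduced the rescaling. Your fallback observation — that at each finite step only finitely many channels are touched — is correct, but it only shows that each $\Phi_{\mathbf{f}^{(n)}}$ is well-defined; it does not by itself give either the convergence of the limit or the full-support bound. What the paper actually does is shrink the channel-$\iota$ target ball to $B(u_n^{(\iota)},2^{-(\iota+1)}r_n)$ and arrange that the initial channel function $f_{\iota}^{(\iota)}$ has $\|\Phi_{f_{\iota}^{(\iota)}}\|_{L^{2}}$ of order $2^{-\iota}$; together with the increment bounds $\|\Phi_{f_n^{(\iota)}}-\Phi_{f_{n-1}^{(\iota)}}\|_{L^{2}}<2^{-n}$ for $n>\iota$, this forces $\|\Phi^{(\iota)}\|_{L^{2}}<2^{-(\iota-1)}$. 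It is this geometric decay in $\iota$, not any rescaling of the $z_0^{(\iota)}$, that makes $\sum_{\iota}\Phi^{(\iota)}$ converge in $L^{1}(X,\bb,\mu;Z)$, and that yields the pointwise estimate $\|\Phi(x)-u_i\|\le\sum_{\iota}\|\Phi^{(\iota)}(x)-u_i^{(\iota)}\|<2r_i$ on a set of positive measure which is needed for full support. Without such a quantitative, $\iota$-dependent bound the passage to the limit does not go through.
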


The analogue statement for universality for ergodic systems is

\begin{theorem}\label{Theo1-4}
 Let $A\in\bb(Z)$ be a bounded operator on $Z$. 
With the notations of Theorem \ref{Theo1-3},
 suppose that there exist 
sequences $\bigl(z_{n}^{(\iota)} \bigr)_{n\in \Z}$, $\iota\in I$, of vectors of $Z$  and integers $r^{(\iota)}$, $\iota \in I$,
where either $I=\en{N}$ for some integer $N\ge 0$, or $I=\Z^{+}$, such 
that $Az_{n}^{(\iota)}=z_{n+1}^{(\iota)}$ for every $n\in \Z$ and $A^{r^{(\iota )}}z_{0}^{(\iota)}=0$, 
for every $\iota \in I$. 

If assumptions (a'), (b') and (c') of Theorem \ref{Theo1-3} hold true, $A$ is universal for ergodic systems.
\end{theorem}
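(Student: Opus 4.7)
The plan is to adapt the proof of Theorem \ref{Theo1bis} carried out in Section \ref{SousSec2d} to the multi-sequence setting. Given an ergodic system $(X,\bb,\mu;T)$ and a finitely-supported tuple $\mathbf{f}=(f^{(\iota)})_{\iota\in I}$ of functions in $\LL{\infty}(X,\bb,\mu)$, I would work with the factor map
\[
\Phi_{\mathbf{f}}(x)=\sum_{\iota\in I}\sum_{k\ge 1}f^{(\iota)}(\expo{T}{k}x)\,\expo{A}{-k+r^{(\iota)}}z_{0}^{(\iota)}.
\]
Assumption (c') makes this series pointwise well-defined $\mu$-almost everywhere, and the identities $A^{r^{(\iota)}}z_{0}^{(\iota)}=0$ yield $\Phi_{\mathbf{f}}(Tx)=A\,\Phi_{\mathbf{f}}(x)$ exactly as in Section \ref{SousSec2d}.

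The decisive structural observation is the following consequence of (b'). There exists a non-zero $z_{0}^{*}\in Z^{*}$ annihilating $A^{-n}z_{0}^{(0)}$ for every $n\in\Z\setminus F$ \emph{and} $A^{-n}z_{0}^{(\iota)}$ for every $\iota\neq 0$ and $n\in\Z$. After shifting $(z_{n}^{(0)})_{n\in\Z}$ as in Section \ref{SousSec2a}, I may assume that $c_{n}:=\pss{z_{0}^{*}}{A^{-n+r^{(0)}}z_{0}^{(0)}}$ is non-zero iff $n\in F\subseteq\Z_{\ge 1}$, whence
\[
\pss{z_{0}^{*}}{\Phi_{\mathbf{f}}(x)}=\sum_{p\in F}c_{p}\,f^{(0)}(\expo{T}{p}x).
\]
The functional $z_{0}^{*}$ \emph{does not see} the components $f^{(\iota)}$ with $\iota\neq 0$, which is what makes the argument go through with only one genuinely controlled coordinate.

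Fix a dense sequence $(u_{n})_{n\ge 0}$ in $Z$, each $u_{n}=\sum_{\iota\in I_{n}}\sum_{k=0}^{d_{n}}a_{k}^{(n,\iota)}\expo{A}{-k+r^{(\iota)}}z_{0}^{(\iota)}$ with $I_{n}\subset I$ finite (legitimate by (a')). I then run the inductive construction of Sections \ref{SousSec2b}--\ref{SousSec2d} to produce tuples $\mathbf{f}_{n}$ and sets $B_{n},C_{n},G_{i}^{(n)},H_{i}^{(n)}$ satisfying the analogues of properties (1)--(6). At step $n+1$, the separation and distinctness conditions (1)--(3) and the application of Lemma \ref{Lemme0} are imposed \emph{solely} on the zeroth component $f_{n+1}^{(0)}$, which is permissible precisely because $z_{0}^{*}$ ignores the other components; the remaining $f_{n+1}^{(\iota)}$, $\iota\neq 0$, are tuned only to guarantee $\Phi_{\mathbf{f}_{n+1}}(x)\in U_{n+1}$ on the new Rokhlin base (property (6c)) and to preserve $\LL{2}$-convergence (5b). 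Passing to the limit, (5b) yields $\Phi=\lim\Phi_{\mathbf{f}_{n}}\in\LL{2}(X,\bb,\mu;Z)$ with $\Phi(Tx)=A\Phi(x)$, and (6) shows that the image measure $\nu$ has full support. Since $\pss{z_{0}^{*}}{\Phi(x)}=\sum_{p\in F}c_{p}f^{(0)}(\expo{T}{p}x)$ with $f^{(0)}=\lim f_{n}^{(0)}$ in $\LL{2}(X,\bb,\mu)$, the reconstruction of the generating partition $(Q_{i})_{i\ge 0}$ from Borel subsets $\Gamma_{i,0},\Gamma_{i,1}\subseteq\C$ proceeds word for word as in Section \ref{SousSec2c}, showing that $\Phi$ is an isomorphism of dynamical systems.

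The genuine new technical obstacle is the bookkeeping when $I$ is countably infinite: at each step only finitely many of the $f_{n+1}^{(\iota)}$ may be altered, yet density of $(u_{n})_{n\ge 0}$ in $Z$ and unconditional convergence of the double series defining $\Phi_{\mathbf{f}}$ must both be preserved throughout. This is handled by a diagonal truncation $I_{n}\nearrow I$ coupled with sufficiently rapidly decreasing error tolerances at each step, using (c') uniformly to ensure that the tail contributions coming from $\iota\in I\setminus I_{n}$ remain negligible at step $n$.
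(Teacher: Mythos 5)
Your proposal is essentially the same as the paper's proof: you define $\Phi$ as a sum over $\iota$ of one-sided maps $\Phi_{f^{(\iota)}}(x)=\sum_{k\ge 1}f^{(\iota)}(T^{k}x)A^{-k+r^{(\iota)}}z_{0}^{(\iota)}$, observe that the separating functional $z_{0}^{*}$ from (b') sees only the $\iota=0$ component (so that Lemma~\ref{Lemme0} and the separation/distinctness properties (1)--(3) need be enforced only for $f_{n}^{(0)}$), and handle a countably infinite $I$ by introducing a new coordinate at each step and forcing the initial function $f_{\iota}^{(\iota)}$ to have small norm so that $\sum_{\iota}\|\Phi^{(\iota)}\|_{L^{2}}<\infty$. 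This is precisely the strategy of Section~\ref{SousSec2d} combined with the multi-sequence bookkeeping sketched in the proof of Theorems~\ref{Theo1-3}--\ref{Theo1-4}, including the shrinking balls $U_{n}^{(\iota)}=B(u_{n}^{(\iota)},2^{-(\iota+1)}r_{n})$ used to guarantee property (6c) componentwise.
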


\begin{proof}
 The proofs are almost the same as those of Theorems \ref{Theo1} and 
\ref{Theo1bis}, with the additional complication that several functions 
$f^{(\iota)}\in\LL{2}(X,\bb,\mu )$, $\iota\in I$, have to be considered, and 
several sequences $(f_{n}^{(\iota)})_{n\ge \iota}$ of functions of $\LL{\infty }
(X,\bb,\mu )$ introduced. We sketch here the proof of Theorem \ref{Theo1-3}, and leave the proof of Theorem \ref{Theo1-4} to the reader.
\par\smallskip 
Let $(u_{n})_{n\ge 0}$ be a sequence of vectors of $Z$ which is dense in 
$Z$ and has the following property: there exist for each $n\ge 0$ an 
integer $d_{n}$ and complex coefficients $a_{k}^{(\iota ,\,n)}$, 
$k\in\en{n+1}$, $\iota \in\en{n}$, such that 
$$u_{n}=\sum_{\iota =0}^{n}\sum_{k=0}^{d_{n}}a_{k}^{(\iota ,\,n)}\expo{A}{-n}z_{0}
^{(\iota)}.$$ Let $(r_{n})_{n\ge 0}$ be a decreasing sequence of positive radii such 
that the balls $U_{n}=B(u_{n},r_{n})$, $n\ge 0$, form a basis of the 
topology of $Z$.  Let also $z_{0}^{*}$ be a non-zero functional and $F$ a finite 
subset of $\Z$ such that $\pss{z_{0}^{*}}{\expo{A}{-n}z_{0}^{(\iota)}}
=0$ for every $\iota \in I\setminus \{0\}$ and every  $n\in\Z$, 
$c_{n}=\pss{z_{0}^{*}}{\expo{A}{-n}z_{0}^{(0)}}
=0$ for every $n\in\Z\setminus F$, and 
$c_{n}=\pss{z_{0}^{*}}{\expo{A}{-n}z_{0}^{(0)}}$ is non-zero for every 
$n\in F$ (the set $F$ can be assumed to contain $0$).
\par\smallskip 
Then we construct sequences $\bigl(f_{n}^{(\iota)} \bigr)_{n\ge \iota }$, $\iota \in I$, 
as in the proof of Theorem \ref{Theo1}: at each step $n$ we construct the 
functions $f_{n}^{(\iota)}$ for $\iota \in\en{n}$: for each such $\iota$, 
the open set used in the construction is $U_{n}^{(\iota)}=
B(u_{n}^{(\iota)},\expo{2}{-(\iota +1)}r_{n})$, where 
$$u_{n}^{(\iota)}=\sum_{k=0}^{d_{n}}a_{k}^{(\iota ,\,n)}\expo{A}{-n}z_{0}^{(\iota)}$$ if 
$\iota \in\en{n}$, $u_{n}^{(\iota)}=0$ otherwise (if $I$ is finite, $\iota \in\en{n}$ 
means that $\iota \in\en{\min(|I|,n)}$). Hence $u_{n}=\sum_{\iota \in I}u_{n}^{(\iota)}$ for every $n\ge 0$.
\par\smallskip
We carry out the construction in such 
a way that properties (1)--(6) hold true for $f_{n}^{(0)}$, 
and properties (4)--(6) hold true for $f_{n}^{(\iota)}$,
$\iota \in\{1,\dots,n\}$. The important points in this construction, when 
compared to the proof of Theorem \ref{Theo1} are the following:
\par\smallskip
-- the initial functions $f_{\iota }^{(\iota)}$, $\iota \in I$, are such 
that $||f_{\iota }^{(\iota)}||_{\LL{\infty}(X,\bb,\mu )}<2^{-(\iota +1)}\frac{r^{(\iota)}}{C_{\iota }}$, so that
$$||\Phi_{\iota }^{(\iota)}||_{\LL{2}(X,\bb,\mu ;Z)}\le C_{\iota }
||f_{\iota }^{(\iota)}||_{\LL{\infty}(X,\bb,\mu )}
<2^{-(\iota +1)}{r^{(\iota )}},$$ where $C_{\iota }$ is a positive constant such that 
$$\left|\left|\sum_{k\in\Z} a_{k}^{(\iota ) } A^{-k} z_{0}^{(\iota )} \right|\right|\le C_{\iota }\sup _{k\in\Z}|a_{k}^{(\iota) }|$$ for every $(a_{k}^{\iota })_{k\in\Z}\in\ell_{\infty}(\Z)$.
\par\smallskip
-- at a given step of the construction, the sets $E$ and 
$E'$ and the parameters $\gamma $ and $\eta $ can be chosen independently of the 
index $\iota \in\en{n}$. It follows that the sets $B_{n}$ and 
$G_{i}^{(n)}$, which may a priori depend on the index $\iota $, can be 
constructed so as not to depend on it, as well as the parameters $\gamma 
_{n}$, $\eta _{n}$, and $\delta _{n}$ (the other parameters in the 
construction, as well as the sets $C_{n}$, $D_{i,\,0}^{(n)}$, 
$D_{i,\,1}^{(n)}$, $F_{i,\,0}^{(n)}$, 
$F_{i,\,1}^{(n)}$, and $H_{i}^{(n)}$ are involved only for the index 
$\iota =0$).
More precisely, Step 11 in the proof of the construction becomes: we know that for every $i\in\en{n}$ and every $x\in G_{i}^{(n)}$, we have
$$||\Phi_{f_{n}^{(\iota )}}-u_{i}^{(\iota )}||<2^{-(\iota +1)}r_{i}\quad \textrm{for every }\iota \in \en{n}.$$ Let $\kappa $ be so small that for every $i\in\en{n}$,
\[
\mu \bigl(\bigl\{x\in G_{i}^{(n)} \textrm{ ; } \textrm{for every }\iota \in \en{n},\, ||\Phi 
_{f_{n}^{(\iota )}}(x)-u_{i}^{(\iota )}||<2^{-(\iota +1)}(r_{i}-\kappa) \bigr\}\bigr)\ge\delta 
_{i}\bigl(1+\frac{3}{4} 2^{-n} \bigr) .
\]
We set for $i\in\en{n}$
\[
G_{i}^{\np}=\bigl\{x\in G_{i}^{(n)} \textrm{ ; }\textrm{for every }\iota \in \en{n},\, ||\Phi 
_{f_{n}^{(\iota )}}(x)-u_{i}^{(\iota )}||<2^{-(\iota +1)}(r_{i}-\kappa)\bigr\}\cap B_{n+1}
\]
and $G_{n+1}^{\np}=E$.
Properties (6a) and (6b) are clearly true. It remains to prove property (6c). It is not difficult to see that for every $i\in\en{n}$ and every $\iota \in\en{n}$, $\Phi 
_{f_{n+1}^{(\iota )}}(x)\in U_{i}^{(\iota )}$ for every $x\in G_{i}^{(n+1)}$. We then have to check that for every $i \in\en{n}$
and every $x\in G_{i}^{(n+1)}$, $\Phi 
_{f_{n+1}^{(n+1 )}}(x)\in U_{i}^{(n+1)}$, i.e. that 
$$||\Phi 
_{f_{n+1}^{(n+1)}}(x)-u_{i}^{(n+1)}||<2^{-(n+2)}r_{i}.$$
But $u_{i}^{(n+1)}=0$ and $$||\Phi 
_{f_{n+1}^{(n+1)}}||_{\LL{\infty}(X,\bb,\mu )}<2^{-(n +2)}r_{n+1}<2^{-(n+2)}r_{i},$$ so we do have that $\Phi 
_{f_{n+1}^{(n+1 )}}(x)\in U_{i}^{(n+1)}$. The last item in the proof of property (6c) is to show that $\Phi 
_{f_{n+1}^{(\iota )}}(x)\in U_{n+1}^{(\iota )}$ for every $x\in G_{n+1}^{(n+1)}=E$ and every $\iota \in \en{n}$, and here the proof is again exactly the same.
\par\smallskip
We thus have $$||f_{n}^{(\iota)}-f_{n-1}^{(\iota)}||_{\LL{2}(X,\bb,\mu )}<2^{-n}
\quad
\textrm{and} \quad ||\Phi _{f_{n}}^{(\iota)}-\Phi _{f_{n-1}}^{(\iota)}||_{\LL{2}(X,\bb,\mu ;Z
)}<2^{-n}$$ for every $n\ge 1$ and $\iota\in\en{n}$ and $$||\Phi 
_{f_{n}^{(\iota)}}(x)-u_{n}^{(\iota)}||<2^{-(\iota+1)}r_{n}$$ for every $x\in 
G_{i}^{(n)}$, $i\in\en{n}$, $\iota \in\en{n}$. It follows that the sequence 
$\bigl(f_{n}^{(\iota)} \bigr)_{n\ge \iota }$ converges in $\LL{2}(X,\bb,\mu )$ to a 
function $f^{(\iota)}$ which satisfies
\begin{align*}
||f^{(\iota)}||_{\LL{2}(X,\bb,\mu )}&\le ||f_{\iota}^{(\iota)}||_{\LL{2}(X,\bb,\mu )}+\sum_{n\ge \iota}
||f_{n+1}^{(\iota)}-f_{n}^{(\iota)}||_{\LL{2}(X,\bb,\mu )}\\
&< 2^{-(\iota+1)}+\sum_{n\ge \iota}2^{-(n+1)}\le 
2^{-(\iota-1)}.
\end{align*}
In the same way, $\bigl(\Phi _{f_{n}}^{(\iota)} \bigr)_{n\ge \iota}$ converges in 
$\LL{2}(X,\bb,\mu ;Z)$ to a function $\Phi ^{(\iota)}$ which satisfies
$||\Phi ^{(\iota)}||_{\LL{2}(X,\bb,\mu ;Z)}<2^{-(\iota-1)}$. In particular, the 
series $\sum_{\iota\in I}||\Phi ^{(\iota)}||_{\LL{2}(X,\bb,\mu;Z )}$ is convergent.
\par\smallskip 
We now consider the map $\Phi :X\longrightarrow Z$ defined as 
$$\Phi =\sum_{\iota\in I}\Phi ^{(\iota)}.$$ Since
\[
\int_{X}\sum_{\iota\in I}||\Phi ^{(\iota)}(x)||\,d\mu (x)\le\sum_{\iota\in I}\Bigl( 
\int_{X}||\Phi ^{(\iota)}(x)||^{2}d\mu (x)\Bigr)^{1/2}\le
\sum_{\iota\in I}2^{-(\iota -1)}
<+\infty, 
\]
$\Phi (x)$ is defined $\mu $-almost everywhere.
\par\smallskip 
If $\nu $ is the measure defined on $Z$ by setting $\nu (B)=\mu (\Phi 
^{-1}(B))$ for every $B\in\bb_{Z}$, then it is clear that 
$\Phi :(X,\bb,\mu ;T)\longrightarrow (Z,\bb_{Z}, \nu ;A)$ is a factor map. 
It is an isomorphism of dynamical systems for the same reason as in the 
proof of Theorem \ref{Theo1}: if $Q$ is any Borel subset of $X$, we know 
that there exists a Borel subset $C$ of $\C$ such that
\[
\bigl\{x\in X;\ \sum_{p\in F}c_{p}f^{(0)}(\expo{T}{p}x)\in C\bigr\}=Q.
\]
Let now $B=\bigl\{z\in Z;\ \pss{z_{0}^{*}}{z}\in C\bigr\}$. We have
\[
\Phi ^{-1}(B)=\bigl\{x\in X;\ \pss{z_{0}^{*}}{\Phi (x)}=
\sum_{p\in F}c_{p}\,f^{(0)}(\expo{T}{p}x)\in C\bigr\}=Q.
\]
So $\Phi $ is an isomorphism between $(X,\bb,\mu ;T)$ and $ 
(Z,\bb_{Z}, \nu ;A)$. It remains to prove that the measure $\nu $ has full 
support: for every $i\ge 0$, every $n\ge i$, every $\iota \in\en{n}$, and every 
 $x\in G_{i}=\bigcap_{n\ge i}G_{i}^{(n)}$, $||\Phi _{f_{n}^{(\iota)}}-u_{n}^{(\iota)}||<2^{-(\iota +1)}r_{i}$. We 
also know that for every $k\ge 0$, every $\iota \in\en{k}$ and every $x\in B_{k}$, 
$$||\Phi _{f_{k}^{(\iota)}}(x)-\Phi _{f_{k-1}^{(\iota)}}(x)||<\gamma _{k}.$$ So, if 
$x\in G_{i}\cap\bigl(\bigcap_{k\ge n }B_{k} \bigr)$ and $\iota \in\en{n}$,
\begin{align*}
 ||\Phi _{f^{(\iota)}}(x)-u_{i}^{(\iota)}||&\le\sum_{k\ge n+1}
 ||\Phi _{f_{k}^{(\iota)}}(x)-\Phi _{f_{k-1}^{(\iota)}}(x)||+
 ||\Phi _{f_{n}^{(\iota)}}(x)-u_{i}^{(\iota)}||\\
 &\le\sum_{k\ge n+1}\gamma _{k}+2^{-(\iota +1)}r_{i}<\gamma 
_{n}+2^{-(\iota +1)}r_{i}.
\end{align*}
Let us now fix $i\ge 0$.
Let $(n_{\iota })_{\iota \in I}$ be an increasing (finite or infinite) sequence of integers such that $\gamma _{n_{\iota }}<2^{-(\iota +1)}r_{i}$ and 
$\mu \bigl(G_{i}\cap\bigl(\bigcap_{k\ge n_{0}}B_{k} \bigr) \bigr)>0$. If $x\in G_{i}\cap\bigl(\bigcap_{k\ge n_{0}}B_{k} \bigr)$, then $x\in G_{i}\cap\bigl(\bigcap_{k\ge n_{\iota  }}B_{k} \bigr)$ for every $\iota \in I$, so that $$||\Phi _{f^{(\iota)}}(x)-u_{i}^{(\iota)}
||<2^{-\iota }r_{i}\quad \textrm{ for every } \iota \in I.$$ Hence
$$||\sum_{\iota \in I}\Phi _{f^{(\iota)}}(x)-\sum_{\iota \in I}u_{i}^{(\iota)}||<2r_{i}\quad \textrm{ for every }x\in G_{i}\cap\bigl(\bigcap_{k\ge n_{0 }}B_{k} \bigr),$$  
i.e.\ $\Phi (x)\in B(u_{i},2r_{i})$. This being true for every $i\ge 0 $, 
it follows that the measure $\nu $ has full support.
\end{proof}

\section{Universal unilateral and bilateral weighted shifts}\label{Sec3}
The easiest class of operators to which the criteria of Theorems 
\ref{Theo1} and \ref{Theo1bis} can be applied is the class of weighted 
shifts on $\lp{p}{\N}$ or $\lp{p}{\Z}$, $1\le p<\infty $, or on 
$\co{\N}$ or $\co{\Z}$. We first give the proof of Theorem \ref{Theo2}.

\subsection{Proof of Theorem \ref{Theo2}}\label{SousSec3a}
Suppose that $\bw $ is a unilateral weighted shift with weights
$(w_{n})_{n\ge 1}$ on one of the spaces $\lp{p}{\N}$, $1\le p<\infty $, or 
$\co{\N}$, with $\bw e_{0}=0$ and $\bw e_{n}=w_{n}e_{n-1}$ for every $n\ge 
1$. Let us set $z_{0}=e_{0}$ and $z_{-n}=1/(\wn{n})\,e_{n}$ for every 
$n\ge 1$. Then 
\[
\bw z_{-n}=\dfrac{1}{\wn{n}}\,w_{n}e_{n-1}=z_{-(n-1)}\quad\textrm{for every}\ n\ge 1,
\ \textrm{and}\ \bw z_{0}=0.
\]
If we write $z_{-n}=\bw ^{-n}e_{0}$ for every $n\ge 1$, then the vectors 
$\bw^{-n}z_{0}$, $n\ge 0$, span a dense subset of $\lp{p}{\N}$ (or $\co{\N}$). Also, 
the linear span of the vectors $\bw^{-n}z_{0}$, $n\ge 1$, is not dense, so 
that  assumption (b) of Theorem \ref{Theo1bis} always  holds true with $F
=\{0\}$.
\par\smallskip 
The series $\sum_{n\ge 0}\bw^{-n}e_{0}$ is unconditionally convergent in 
$\lp{p}{\N}$ (resp.\ $\co{\N}$) if and only if the series $\sum_{n\ge 1}
1/|\wn{n}|^{p}$ is convergent (resp.\ if and only if 
$|\wn{n}|\longrightarrow +\infty $ as $n\longrightarrow +\infty $). So if 
this last condition is satisfied, $\bw$ is universal on $\lp{p}{\N}$ 
(resp.\ 
$\co{\N}$) for ergodic systems. 
\par\smallskip 
This condition is also necessary for $\bw$ to be universal for either ergodic or invertible ergodic systems, but the 
arguments are different depending on whether $\bw$ acts on $\lp{p}{\N}$ 
for some $1\le p<+\infty$  or on $\co{\N}$.
\par\smallskip 
-- if $\bw$ acts on $\lp{p}{\N}$, $1\le p< +\infty $, and is universal for 
ergodic systems, then $\bw$ is necessarily frequently hypercyclic, and by the characterization of frequently hypercyclic weighted shifts on $\ell_{p}(\N)$ of 
\cite{BR}, the series $\sum_{n\ge 1}1/|\wn{n}|^{p}$ is convergent;
\par\smallskip 
-- if $\bw$ acts on $\co{\N}$, then the same argument does not apply since 
the condition $|\wn{n}|\longrightarrow +\infty $ as $n\longrightarrow 
+\infty $ does not characterize frequently hypercyclic backward  weighted shifts on  $\co{\N}$
(see \cite{BG2} and
\cite{BR} for details). But if $\bw$ is universal for (invertible) ergodic systems, then it is necessarily strongly
mixing with respect to some invariant measure with full support, hence in 
particular topologically mixing. So, for every $\varepsilon >0$, there 
exists an integer $n_{\varepsilon }$ such that, for every $n\ge 
n_{\varepsilon }$, there exists a vector $x^{(n,\,\varepsilon )}\in c_{0}(\N)$ with
$||x^{(n,\,\varepsilon )}||_{\infty}<\varepsilon $ such that 
$||\bw^{n}\,x^{(n,\,\varepsilon )}-e_{0}||_{\infty}<1/2$. In particular,
$|x_{n}^{(n,\,\varepsilon )}\,\wn{n}-1|<1/2$, so that 
$|x_{n}^{(n,\,\varepsilon )}\,\wn{n}|>1/2$. Thus $\varepsilon |\wn{n}|>1/2$ 
for every $n\ge n_{\varepsilon }$. It follows that 
$|\wn{n}|\longrightarrow +\infty $ as $n\longrightarrow +\infty $.
\par\smallskip 
The arguments for  bilateral weighted shifts are exactly the same and we leave 
them to the reader.

\subsection{Proof of Theorem \ref{Theo4}}\label{SousSec3b}
The proof of Theorem \ref{Theo4} relies on the following simple idea: 
suppose that $Z$ is a Banach space admitting a biorthogonal system 
$(u_{n},u_{n}^{*})_{n\ge 0}$ having the following property:
\par\smallskip 
\emph{there exists a bounded sequence $(\omega _{n})_{n\ge 1}$ of non-zero 
weights such that $B_{\omega}$ defined by $B_{\omega}u_{n}=\omega _{n}u_{n-1}$ for every 
$n\ge 1$ and $B_{\omega} u_{0}=0$ is a bounded operator on $Z$ and 
$$\sum_{n\ge 1}\frac{||u_{n}||}{|\omega _{1}\ldots\omega _{n}|}<+\infty. $$}
\par\smallskip 
Then $B_{\omega}$ is a universal operator for ergodic systems on $Z$. The proof 
of this statement is exactly similar to that of Theorem \ref{Theo2}: we 
set $z_{0}=u_{0}$ and $z_{-n}=(1/|\omega _{1}\ldots\omega _{n}|)\,u_{n}$. Then
$\overline{\vphantom{(}\textrm{span}}\,\bigl[z_{-n};\ n\ge 0 \bigr]=Z$, 
$\pss{u_{0}^{*}}{z_{-n}}=0$ for every $n\ge 1$ so that 
$\overline{\vphantom{(}\textrm{span}}\,\bigl[z_{-n};\ n\ge 1 \bigr]\neq Z$,
and lastly the series $\sum_{n\ge 0}z_{-n}$ is unconditionally convergent 
since $\sum_{n\ge 0}||z_{-n}||<+\infty $. So Theorem \ref{Theo1bis} applies.
\par\smallskip 
Suppose that $Z$ can be decomposed as a topological sum $Z=E\oplus Y$ 
where $E$ has a sub-symmetric basis $(e_{n})_{n\ge 0}$ (i.e.\ the basis 
 $(e_{n})_{n\ge 0}$ is unconditional and equivalent to each of its 
subsequences). Let $(e_{n}^{*})_{n\ge 0}$ denote the family of biorthogonal 
functionals on $E$, which we extend to $Z$ by setting $\pss{e_{n}^{*}}{y}=0$ for 
every $y\in Y$. Let also $(y_{n},y_{n}^{*})_{n\ge 0}$ be a bounded 
biorthogonal system for $Y$, where each $y_{n}^{*}$ is extended to $Z$ by setting $\pss{y_{n}^{*}}{e}=0$ for every $e\in E$. We denote by $P_{E}$ and $P_{Y}$ the 
projections of $Z$ onto $E$ and $Y$ respectively, associated to the 
decomposition $Z=E\oplus Y$, and we let $M=\max(||P_{E}||,||P_{Y}||)$. 
Since the basis $(e_{n})_{n\ge 0}$ is sub-symmetric, $B_{\alpha }$ defined by 
$B_{\alpha } e_{n}=\alpha_{n}e_{n-1}$ for every $n\ge 1$ and $B_{\alpha } e_{0}=0$ is a bounded 
operator on $E$ for any bounded sequence of weights $(\alpha _{n})_{n\ge 1}$. Let $(n_{k})_{k\ge 0}$ be a strictly 
increasing sequence of integers with $n_{0}=0$. Consider the biorthogonal 
system $(u_{n},u_{n}^{*})$ of $Z$ defined by setting
\begin{align*}
 u_{n}&=
 \begin{cases}
  y_{k}&\quad\textrm{if}\ n=n_{k}\ \textrm{for some}\ k\ge 0\\
  e_{n-k-1}&\quad\textrm{if}\ n\in\{n_{k}+1,\dots,n_{k+1}-1\}\ \textrm{for 
some}\ k\ge 0
 \end{cases}\\
 u_{n}^{*}&=
 \begin{cases}
  y_{k}^{*}&\quad\textrm{if}\ n=n_{k}\ \textrm{for some}\ k\ge 0\\
  e_{n-k-1}^{*}&\quad\textrm{if}\ n\in\{n_{k}+1,\dots,n_{k+1}-1\}\ 
\textrm{for 
some}\ k\ge 0.
 \end{cases}
\end{align*}
With this definition, $\{u_{n} ;\ n_{k}<n<n_{k+1}\}=\{e_{n};\ n_{k}-k
\le n<n_{k+1}-(k+1)\}$, and
$\{u_{n};\ n\in\{n_{k};\ k\ge 0\}\}=\{y_{n};\ n\ge 0\}$, so that
$(u_{n},u_{n}^{*})_{n\ge 0}$ is indeed a bounded biorthogonal system of $Z$.
\par\smallskip 
We define an operator $A$ on $Z$ by setting for every $z\in Z$
\[
  Az=\sum_{k\ge 1}u_{n_{k}}^{*}(z)\,w_{k}u_{n_{k}-1}+2\sum_{k\ge 
0}\,\,\Bigl(
\sum_{n=n_{k}+2}^{n_{k+1}-1}u_{n}^{*}(z)\,u_{n-1}\Bigr)+\sum_{k\ge 0}
u_{n_{k}+1}^{*}(z)\,w'_{k}u_{n_{k}}
\]
where the weights $w_{k}$ and $w'_{k}$ are defined by 
$$w_{k}=\frac{2^{-k}}{||y_{k}^{*}||}\quad \textrm{and} \quad w'_{k}=\frac{2^{-k}}{||y_{k}||},\quad k\ge 0.$$ 
Observe that these weights do not depend on the sequence $(n_{k})_{k\ge 
0}$.
\par\smallskip 
This operator is a backward weighted shift with respect to the 
biorthogonal system $(u_{n},u_{n}^{*})_{n\ge 0}$: $Au_{n}=\omega 
_{n}u_{n-1}$ where
\[
\omega _{n}=
\begin{cases}
 w_{k}&\quad\textrm{if}\ n=n_{k}\ \textrm{for some}\ k\ge 1\\
 2&\quad\textrm{if}\ n\in\{n_{k}+2,\dots,n_{k+1}-1\}\ \textrm{for some}\ 
k\ge 0\\
w'_{k}&\quad\textrm{if}\ n=n_{k}+1\ \textrm{for some}\ k\ge 1.\\
\end{cases}
\]
Let us first check that $A$ is bounded.
We have for every $z\in Z$
\[
Az=\sum_{k\ge 1}y_{k}^{*}(z)\,w_{k}e_{n_{k-1}-k}+\sum_{k\ge 
0}\,\,\left(
\sum_{n=n_{k}+2}^{n_{k+1}-1}e_{n-k-1}^{*}(z)\,2\,e_{n-k-2}\right)+\sum_{
k\ge 0}
e^{*}_{n_{k}-k}(z)\,w'_{k}y_{k}.
\]
Since the basis $(e_{n})_{n\ge 0}$ is sub-symmetric, there exists a positive constant $C$ such that for every $z\in Z$,
$$\left|\left|\sum_{k\ge 0}\left(\sum_{n=n_{k}+2}^{n_{k+1}-1} e^{*}_{n-k-1}(z)\,e_{n-k-2}\right)\right|\right|\le C||z||.$$
Hence 
\[
||Az||\le||z||\sum_{k\ge 1}||y_{k}^{*}||\,w_{k}||e_{n_{k-1}-k}||+2C||z||+
||z||\sum_{k\ge 0}||e^{*}_{n_{k}-k}||\,w'_{k}||y_{k}||.
\]
Since $\sup_{n\ge 0}||e_{n}||$ and $\sup_{n\ge 0}||e_{n}^{*}||$
are finite, the conditions on the weights $w_{k}$ and $w'_{k}$ imply that 
$A$ is bounded.
\par\smallskip 
In order to show that $A$ is universal for ergodic systems, it remains to 
choose the sequence $(n_{k})_{k\ge 0}$ in such a way that the series 
$$\sum_{n\ge 1}\frac{||u_{n}||}{\omega _{1}\dots\omega _{n} }$$ is 
convergent. We have
\begin{align*}
\sum_{n\ge 1}\dfrac{||u_{n}||}{\omega _{1}\dots\omega _{n} }&=
\sum_{k\ge 0} 
\sum_{n=n_{k}+1}^{n_{k+1}}\dfrac{||u_{n}||}{\omega 
_{1}\dots\omega _{n} }\\
&\!\!\!\!\!\!=\sum_{k\ge 0}\dfrac{1}{w_{1}\dots w_{k}\,w'_{1}\dots 
w'_{k}\,2^{n_{k}-2k}}
\left(\sum_{n=n_{k}+1}^{n_{k+1}-1}\dfrac{||u_{n}||}{2^{n-(n_{k}+1)}}
+\dfrac{||u_{n_{k+1}}||}{2^{(n_{k+1}-1)-(n_{k}+1)}w_{k+1}}\right).
\end{align*}
If we write $C_{k}=\max\{||u_{n}||;\ n_{k}+1\le n\le n_{k+1}\}, $ this 
yields
\[
\sum_{n\ge 1}\dfrac{||u_{n}||}{\omega _{1}\dots\omega _{n} }\le
\sum_{k\ge 0}\dfrac{C_{k}}{w_{1}\dots w_{k}\,w'_{1}\dots 
w'_{k}\,2^{n_{k}-2k}}\Biggl(  
2+\dfrac{1}{2^{n_{k+1}-n_{k}-2}\,w_{k+1}}\Biggr).
\]
Since the weights $w_{k}$ and $w_{k}'$ are defined independently of the 
sequence $(n_{k})_{k\ge 0}$, we can choose this sequence growing so fast 
that 
\[
2^{n_{k}}>\max\Biggl(\dfrac{2^{n_{k-1}+2}}{w_{k}},\,
\dfrac{2^{3k}\,C_{k}}{w_{1}\dots w_{k}\,w'_{1}\dots 
w'_{k}}\Biggr)\quad\textrm{for every}\ k\ge 1.
\]
Then $$\sum_{n\ge 1}\displaystyle \dfrac{||u_{n}||}{\omega _{1}\dots\omega _{n} 
}\le  \sum_{k\ge 0} 3\,.\, 2^{-k},$$ from which it 
follows that the series $$\sum_{n\ge 1}\frac{||u_{n}||}{\omega 
_{1}\dots\omega _{n} }$$ is convergent. This terminates the proof of 
Theorem \ref{Theo4}.

\section{Unimodular eigenvectors and universality}\label{Sec4}
We begin this section with the proof of Theorem \ref{Theo5}, which gives a 
straightforward criterion in  terms of unimodular eigenvectors for an operator to be 
universal for (invertible) ergodic systems.

\subsection{Proof of Theorem \ref{Theo5}}\label{SousSec4a} Let $A$ be 
a bounded operator on the complex separable infinite-dimensional Banach 
space $Z$, admitting a unimodular eigenvectorfield $E$ satisfying 
assumptions (i), (ii), and (iii) of Theorem \ref{Theo5}. Set, for each 
$n\in\Z$, $z_{n}=\widehat{E}(-n)$. Then $A{z_{n}}=z_{n+1}$ for every $n\in 
\Z$. The vectors $z_{n}$, $n\in\Z$, span a dense  subspace of $Z$. 
Indeed, suppose that $z^{*}\in Z^{*}$ is such that $\pss{z^{*}}{z_{n}}=0$ 
for every
$n\in\Z$. Then the function $\pss{z^{*}}{E(\,.\,)}$ is zero almost 
everywhere, and by (i) it follows that $z^{*}=0$. So $z_{0}$ is bicyclic for $A$. Let  
now $F$ be the spectrum of the polynomial $q$ defined by $q(e^{i\theta })=\sum_{n\in\Z} \widehat{p}(-n)e^{in\theta }$. For every $n\in\Z\setminus
F$, we have 
\[
\pss{z_{0}^{*}}{z_{n}}=\int_{\T}\lambda ^{n}\,\pss{z_{0}^{*}}{E
(\lambda )}\,d\lambda =\widehat{q}(n)=0.
\]
Since $z_{0}^{*}$ is non-zero, it follows that the linear span of the 
vectors $z_{n}$, $n\in\Z\setminus F$, is not dense in $Z$. Lastly, 
assumption (iii) of Theorem \ref{Theo5} states that the series 
$\sum_{n\in\Z }z_{n}$ is unconditionally convergent. So the hypotheses of 
Theorem \ref{Theo1} are satisfied, and $A$ is universal for invertible 
ergodic systems. If $\widehat{E}(-r)=0$ for some integer $r\in\Z$, then $A^{r}z_{0}=0$, and Theorem 
\ref{Theo1bis} applies.
\par\smallskip 
Theorem \ref{Theo6} can now be obtained as a consequence of Theorem 
\ref{Theo5}.
\subsection{Proof of Theorem \ref{Theo6}}\label{SousSec4b}
Suppose that $A\in\bb(Z)$ admits a unimodular eigenvectorfield $E$ which 
is analytic in a neighborhood $\Omega $ of $\T$, and such that the 
vectors $E(\lambda )$, $\lambda \in\T$, span a dense subspace of $Z$. Then 
the restriction of $E$ to $\T$ is $\sigma $-spanning for any measure 
$\sigma $ with infinite support. In particular, it is $d\lambda 
$-spanning, where $d\lambda $ is the normalized Lebesgue measure on $\T$.
\par\smallskip 
Let $z_{0}^{*}$ be any non-zero element of $Z^{*}$. The function $\varphi 
$ defined on $\Omega $ by setting $$\varphi 
(\lambda)=\pss{z_{0}^{*}}{E(\lambda )},\quad \lambda \in\T,$$ is analytic on 
$\Omega $, and not identically zero since the span of the vectors 
$E(\lambda )$, 
$\lambda \in\T$, is dense in $Z$. Thus $\varphi $ admits only finitely many 
zeroes $z_{1},\dots,z_{r}$ on $\T$, with respective multiplicities 
$d_{1},\dots,d_{r}$. Let $p(z)=\prod_{j=1}^{r}(z-z_{j})^{d_{j}}$. There 
exists a function $\psi $, which is analytic on $\Omega $ and does not 
vanish on a neighborhood $\Omega '$ of $\T$ such that 
$\varphi (z)=p(z)\psi (z)$ for every $z\in\Omega '$.
\par\smallskip 
Consider the eigenvectorfield $F:\Omega '\longrightarrow Z$ defined by 
$F(\lambda )=E(\lambda )/\psi (\lambda )$. The span of the vectors 
$F(\lambda )$, $\lambda \in\T$, is dense in $Z$. Moreover, for every 
$\lambda \in \T$, 
\[
\pss{z_{0}^{*}}{F(\lambda )}=\dfrac{1}{\psi (\lambda )}\,\pss{z_{0}^{*}}
{E(\lambda )}=p(\lambda ).
\]
Lastly, since $F$ is analytic on a neighborhood of $\T$, there exists 
$a\in(0,1)$ such that $$||\widehat{F}(n)||=\textrm{O}(a^{|n|})\quad \textrm{as }|n|
\longrightarrow +\infty .$$ Hence the series $\sum_{n\in\Z}\widehat{F}(n)$ 
is unconditionally convergent. The assumptions of Theorem \ref{Theo5} are 
thus satisfied, and $A$ is universal for invertible ergodic systems.
\par\smallskip
If $E$ is analytic in a neighborhood of $\overline{\D}$, the same kind of 
reasoning applies: if $z_{1},\dots,z_{r}$ are the zeroes of 
$\varphi (\lambda )=\pss{z_{0}^{*}}{E(\lambda )}$ on $\overline{\D}$ with multiplicities 
$d_{1},\dots,d_{r}$, and if $p(z)=\prod_{j=1}^{r}(z-z_{j})^{d_{j}}$, then 
$\varphi (z)=p(z)\psi (z)$ on a neighborhood of $\overline{\D}$, where 
$\psi $ is an analytic function on this neighborhood which does not vanish. 
If we consider again the eigenvectorfield $F$ defined by $F(\lambda )=
E(\lambda )/\psi (\lambda )$, then $F$ is analytic in a neighborhood of 
$\overline{\D}$ so that $\widehat{F}(n)=0$ for every $n<0$. So Theorem
\ref{Theo5} applies again and $A$ is universal for ergodic systems.
\par\smallskip 
Let us mention here that, using Theorems \ref{Theo1-3} and \ref{Theo1-4}, Theorems \ref{Theo5} and \ref{Theo6} can be 
generalized to the case where $A$ admits several eigenvectorfields.

\subsection{Examples and applications}\label{SousSec4c}
We present in this section several examples of operators which can be 
shown to be universal thanks to Theorems \ref{Theo5} and \ref{Theo6}. 
First, one can easily retrieve the universality of some of the weighted 
shifts considered in Theorem \ref{Theo2} above.

\begin{example}\label{Example1}
Let $\alpha B$, $|\alpha |>1$, be a multiple of the unweighted backward 
shift on $\lp{p}{\N}$, $1\le p<+\infty $, or $\co{\N}$. Then $\alpha B$ 
admits an eigenvectorfield $E$ defined on the disk of radius $|\alpha |$ by
$$E(\lambda )=\sum_{n\ge 0}\left(\frac{\lambda }{\alpha }\right)^{n-1}e_{n}.$$ So $E$ is analytic 
on $\D(0,|\alpha |)$, and it is easy to check that the eigenvectors 
$E(\lambda )$, $\lambda \in\T$, span a dense subspace of $\lp{p}{\N}$ or 
$\co{\N}$. Thus Theorem \ref{Theo6} applies, and $\alpha B$ is universal 
for ergodic systems. The same argument applies for instance to the 
weighted shift $S_{w}$ on $\lp{p}{\Z}$ or $\co{\Z}$, where the weight $w$ is given by 
$w_{n}=2$ if $n\ge 1$ and $w_{n}=1/2$ if $n\le 0$: $S_{w}$ is universal 
for invertible ergodic systems.
\end{example}
\par\smallskip 
Our second class of examples is given by adjoints of multipliers 
$M_{\varphi }^{*}$ on $H^{2}(\D)$. This is a natural class
 of operators to consider here, since their dynamical properties 
(hypercyclicity, frequent hypercyclicity, ergodicity) are rather well understood.
See for instance \cite{BM} for details.

\begin{example}\label{Example2}
Denoting by $\D$ the open unit disk, let $\varphi :\D\longrightarrow\C$ be an analytic map belonging to 
$H^{\infty }(\D)$, and consider the adjoint $M_{\varphi }^{*}$ of the 
multiplier $M_{\varphi }$ defined on the Hardy space $H^{2}(\D)$ by 
setting 
$M_{\varphi }\,f=\varphi f$ for every $f\in H^{2}(\D)$. If $\T\subseteq\varphi (\D)$, then 
$M^{*}_{\varphi }$ is universal for invertible ergodic systems.
\end{example}

 \begin{proof}[Proof of Example \ref{Example2}] 
Suppose that the analytic map $\varphi :\D\To  \C$ is such that $\varphi (\D)$ contains the unit circle. Since $\varphi $ is open, $K=\varphi ^{-1}(\T)$ is a compact subset of $\D$, and there exists $\rho  \in(0,1)$ such that $K\subseteq D(0,\rho  )$. The derivative $\varphi '$ of $\varphi $ can vanish only finitely many times on $K$, and we denote by $z_{1},\dots,z_{r}$ the distinct zeroes of $\varphi '$ on $K$, with respective multiplicities $m_{1},\dots,m_{r}$. There exists for every $j\in\{1,\dots,r\}$ a disk $D(z_{j},\varepsilon _{j})$ with $\varepsilon _{j}>0$ and two holomorphic functions $\psi _{j}$ and $\sigma _{j}$ on $D(z_{j},\varepsilon _{j})$ which do not vanish here  such that for every $z\in D(z_{j},\varepsilon _{j})$, $$\varphi (z)=\lambda _{j}+(z-z_{j})^{d_{j}}\psi _{j}(z)\quad \textrm{and}\quad\varphi '(z)=(z-z_{j})^{m_{j}}\sigma _{j}(z)$$ where $d_{j}=m_{j}+1$ and $\lambda _{j}=\varphi (z_{j})$.
Also, there exists a holomorphic function $\beta _{j}$ on $D(z_{j},\varepsilon _{j})$ such that $\psi _{j}=\beta _{j}^{d_{j}}$. If we set $\alpha _{j}(z)=(z-z_{j})\beta _{j}(z)$ for $z\in D(z_{j},\varepsilon _{j})$, we can assume that $\alpha _{j}$ is a biholomorphism from a certain open neighborhood $V_{j}$ of $z_{j}$ contained in $D(z_{j},\varepsilon _{j})$ onto an open disk $D(0,\delta _{j})$.
Let 
\[
\Omega _{j,\,0} =D(0,\delta _{j}^{d_{j}})\setminus [0,\delta _{j}^{d_{j}}) 
\]
and let $\gamma _{j,\,0} $ be an holomorphic determination of the $d_{j}$-th root of $z$ on $\Omega _{j,\,0} $: 
$\gamma _{j,\,0} (z)^{d_{j}}=z$ for every $z\in\Omega _{j,\,0} $. 
We also set $V_{j,\,0} =\bigl\{z\in V_{j};\ \alpha _{j}(z)^{d_{j}}\in\Omega _{j,\,0} \bigr\}$, $U_{j,\,0} =\lambda _{j}+\Omega _{j,\,0}$ and $U_{j}=D(\lambda_{j} ,\delta_{j} ^{d_{j}})$. Observe that $U_{j,\,0}\cap\T$ contains a set of the form $\Gamma_j\setminus\{\lambda_j\}$, where $\Gamma_j$ is an open sub-arc of $\T$ containing the point $\lambda_j$.

We now claim that  $\varphi $ is a biholomorphism from $V_{j,\,0} $ onto $U_{j,\,0} $. Let us first check that $\varphi (V_{j,\,0} )=U_{j,\,0} $: $z\in V_{j}$ belongs to $V_{j,\,0} $ if and only if $\alpha _{j}(z)^{d_{j}}=\varphi (z)-\lambda _{j}$ belongs to $\Omega _{j,\,0}$, i.e.\ if and only if $\varphi (z)$ belongs to $U_{j,\,0}$.
Let us set, for $z\in U_{j,\,0}$, $$\varphi ^{-1}_{j,\,0}(z)=\alpha ^{-1}_{j}\circ\gamma _{j,\,0}\,(z-\lambda _{j}).$$ This definition makes sense: if $z\in U_{j,\,0}$, $z-\lambda _{j}\in\Omega _{j,\,0}$, so that $\gamma _{j,\,0}(z-\lambda _{j})\in D(0,\delta _{j})$, and $\alpha _{j}$ is a biholomorphism from $V_{j}$ onto $D(0,\delta _{j})$. This function $\varphi^{-1}_{j,\,0}$ is thus well-defined and holomorphic on $U_{j,\,0}$, and we have for every $z\in U_{j,\,0}$
\[
\varphi \bigl( \varphi ^{-1}_{j,\,0}(z)\bigr)=\lambda _{j}+\alpha _{j}\bigl( \varphi ^{-1}_{j,\,0}(z)\bigr)^{d_{j}}=\lambda _{j}+\bigl( \gamma _{j,\,0}(z-\lambda _{j})\bigr)^{d_{j}}=\lambda _{j}+z-\lambda _{j}=z
\]
since $z-\lambda _{j}$ belongs to $\Omega _{j,\,0}$ and $\gamma _{j,\,0}(z)^{d_{j}}=z$ for every $z\in\Omega _{j,\,0}$. It follows from this that $\varphi :V_{j,\,0}\To  U_{j,\,0}$ is a biholomorphism, the inverse of which is $\varphi _{j,\,0}^{-1}$. Restricting the sets $V_{j,\,0}$ and $U_{j,\,0}$, we can and do assume that, for every $j\in\{1,\ldots,r\}$,
$U_{j,\,0}\cap\T=\Gamma_j\setminus\{\lambda_j\}$, where $\Gamma_j$ is an open sub-arc of $\T$ containing the point $\lambda_j$, and that the arcs $\Gamma_j$, $j\in\{1,\ldots, r\}$, do not intersect.
\par\medskip 
If $z\in K$ is such that $\varphi (z)\not \in\{\lambda_{1},\dots,\lambda_{r}\}$ and $\varphi '(z)\neq 0$, $\varphi $ is a biholomorphism in a neighborhood of $z$. For every such $z$, let $V_{z}$ be an open neighborhood of $z$, and $U_{z}$ a disk centered at $\varphi (z)$ of radius $\rho _{z}>0$ such that $\varphi :V_{z}\To  U_{z}$ is a biholomorphism,
$\varphi'$ does not vanish on $V_{z}$, and the closure of the set
$U_{z}$ contains none of the points $\lambda_{j}$, $j\in\{1,\ldots,r\}$.
The disks $U_{z}$, $z\in \varphi^{-1}(\T\setminus \{\lambda_{1},\dots,\lambda_{r}\})$ and $U_{j}$, $j\in\{1,\ldots,r\}$, form an open covering of $\T$ (remember our assumption that $\T\subseteq \varphi(\D)$), so one can extract from it a finite covering of the form $U_{\xi _{1}},\dots,U_{\xi _{s}},U_{1},\dots,U_{r}$, $s\ge 1$. 
Denote by $\bigl( \Omega_{l}\bigr)_{l\in\Lambda } $ the finite family of open subsets of $\D$ consisting of the sets $U_{\xi _{i}}$, $1\le i\le s$,  and $U_{j,\,0}$, $1\le j\le r$. For every $l\in \Lambda $, $\Omega'_l=\Omega _{l}\cap \T$ is either an open subarc of $\T$ whose closure is contained in $\T\setminus\{\lambda _{1},\dots,\lambda _{r}\}$ (when $\Omega_{l}=U_{\xi _{i}}$ for some $i\in\{1,\ldots, s\}$) or 
an open subarc minus one point (when $\Omega_{l}=U_{j,\,0}$ for some $j\in\{1,\ldots, r\}$, in which case 
$\Omega'_{l}=\Gamma_j\setminus\{\lambda_j\}$).
We have $$\bigcup_{l\in\Lambda }\Omega' _{l}\cap\T=\T\setminus\{\lambda _{1},\dots,\lambda _{r}\}.$$ 
Observe that for every $j\in\{1,\ldots, r\}$, there is a unique index $l\in \Lambda $ such that $\Omega'_{l}$ contains the point $\lambda_j$ in its closure. Writing this index as $l_{j}$, we have $\Omega _{l_{j}}=U_{j,\,0}$ and $\Omega'_{l_{j}}=\Omega _{l_{j}}\cap\T=\Gamma_j\setminus\{\lambda_j\}$. For every $l\in\Lambda $, we denote by $\varphi ^{-1}_{l}$ the inverse of $\varphi $ defined on the set $\Omega _{l}$. 
\par\smallskip
Let $\bigl( v_{l}\bigr)_{l\in\Lambda }$ be a finite $\mathcal{C}^{1}$-partition of the unity associated to the finite covering $\bigl( \Omega' _{l}\bigr)_{l\in\Lambda }$ of $\T\setminus\{\lambda _{1},\dots,\lambda _{r}\}$, with $v_{l}$ supported on $\Omega'_l$ for every $l\in\Lambda$.
If $\Omega'_l=U_{\xi_i}\cap\T$ for some $i\in\{1,\ldots, s\}$, the function $v_{l}$, which is supported on $\Omega '_{l}$, obsviously extend into a $\mathcal{C}^{1}$ function on the whole circle $\T$ by setting $v_{l}(\lambda _{j})=0$ for every $j\in\{1,\ldots,r\}$.
If 
$\Omega'_l=\Omega '_{l_{j}}$ for some $j\in\{1,\ldots, r\}$, the observation above shows that
there exists an open sub-arc $\Gamma '_{j}$ of $\Gamma  _{j}$, containing $\lambda _{j}$, such that the only index $l\in \Lambda $ for which $v_{l}$ does not identically vanish on $\Gamma  '_{j}\setminus\{\lambda _{j}\}$ is $l_{j}$.
Hence $v_{l_{j}}$ is equal to $1$ on $\Gamma  '_{j}\setminus\{\lambda_j\}$, and it follows that $v_{l_{j}}$ can be extended into a $\mathcal{C}^{1}$ function on the whole circle $\T$ by setting $v_{l_{j}}(\lambda _{j})=1$. We can thus assume that all the functions $v_{l}$, $l\in\Lambda$, are defined and of class $\mathcal{C}^{1}$ on $\T$. 
\par\smallskip
For every $z\in\D$, let $k_{z}$ be the reproducing kernel of the space 
$H^{2}(\D)$ at the point $z$:
\[
k_{z}(\xi)= \sum_{n\ge 0}\overline{\vphantom{t}z}^{\,n}\xi 
^{n}=\dfrac{1}{1-\overline{\vphantom{t}z}\,\xi  },\quad \xi \in\D,
\]
and $k_{z}$ is characterized by the property that $\pss{f}{k_{z}}=f(z)$ 
for every $f\in H^{2}(\D)$.
We have $M_{\varphi}^{*}k_{z}=\overline{\varphi(z)}k_{z}$
for every $z\in\D$.
We then introduce the polynomial $$p(z)=\prod_{j=1}^{r}(z-\lambda _{j})^{2}$$ and the maps $F:\T\setminus\{\lambda _{1},\dots,\lambda _{r}\}\To  H^{2}(\D)$ and 
$E:\T\setminus\{\ba{\lambda }_{1},\dots,\ba{\lambda }_{r}\}\To H^{2}(\D)$ defined by 
\begin{align*}
 \forall\,\lambda \in\T\setminus\{\lambda _{1},\dots,\lambda _{r}\},\qquad &F(\lambda )=\ba{p(\lambda )}\,\sum_{l\in\Lambda }v_{l}(\lambda )\,k_{\varphi ^{-1}_{l}(\lambda )}
\intertext{and}
 \forall\,\lambda \in\T\setminus\{\ba{\lambda} _{1},\dots,\ba{\lambda} _{r}\},\qquad &E(\lambda )=F(\ba\lambda ).
\end{align*}
Observe that the quantity $k_{\varphi ^{-1}_{l}(\lambda )}$ in the expression of $F(\lambda)$ above only makes sense when $\lambda$ belongs to $\Omega'_l$. But since $v_l$ is supported on $\Omega'_l$, the function $\lambda\mapsto v_l(\lambda)k_{\varphi ^{-1}_{l}(\lambda )} $ extends into a $\mathcal{C}^{\infty}$ function on $\T\setminus\{{\lambda} _{1},\dots,{\lambda} _{r}\}$ by defining it to be zero outside the set $\Omega'_l$, so that $F$ is a well-defined $\mathcal{C}^{\infty}$ function on $\T\setminus\{{\lambda} _{1},\dots,{\lambda} _{r}\}$.
\par\smallskip
For every $\lambda \in\T\setminus\{\lambda _{1},\dots,\lambda _{r}\}$ we have
$$M^{*}_{\varphi} F(\lambda )=\ba{p(\lambda )}\,\displaystyle\sum_{l\in\Lambda }v_{l}(\lambda )\ba{\varphi \bigl( \varphi ^{-1}_{l}(\lambda )\bigr)}\,k_{\varphi ^{-1}_{l}(\lambda )}$$ so that $M^{*}_{\varphi }F(\lambda )=\ba{\lambda }\,F(\lambda )$. It follows that 
$E:\T\setminus\{\ba{\lambda }_{1},\dots,\ba{\lambda }_{r}\}\To H^{2}(\D)$ is a unimodular eigenvectorfield for $M^{*}_{\varphi }$. 
\par\smallskip
Our aim is to show that the assumptions of Theorem \ref{Theo5} are satisfied. Assumptions (i) and (ii) are easy to check: suppose that $f\in H^{2}(\D)$ is such that $\pss{f}{E(\lambda )}=0$ for every $\lambda \in B$ where $B\subseteq\T\setminus\{\ba{\lambda} _{1},\dots,\ba{\lambda} _{r}\}$ is a Borel subset of $\T$ of full Lebesgue measure. Then  by continuity $$\ba{p(\lambda )}\,\displaystyle\sum_{l\in\Lambda }v_{l}(\lambda )\,f\bigl( \varphi ^{-1}_{l}(\lambda )\bigr)=0 \quad \textrm{ for every } \lambda \in\T\setminus\{\lambda_{1},\dots,\lambda_{r}\},$$ so that $$\displaystyle\sum_{l\in\Lambda }v_{l}(\lambda )\,f\bigl( \varphi ^{-1}_{l}(\lambda )\bigr)=0 \quad \textrm{ for every } \lambda \in\T\setminus\{\lambda_{1},\dots,\lambda_{r}\}.$$ 
It follows that for every $j\in\{1,\ldots, r\}$ and every $\lambda \in\Gamma'_{j}\setminus\{\lambda _{j}\} $, $f\bigl( \varphi ^{-1}_{l_{j}}(\lambda )\bigr)=0$. Since $\varphi ^{-1}_{l_{j}}(\Gamma'_{j}\setminus\{\lambda _{j}\}  )$ has accumulation points in $\D$, $f=0$, and $\textrm{span}\,\{ E(\lambda );\ \lambda \in B\}$ is dense in $H^{2}(\D)$. Assumption (i) is 
thus satisfied. Assumption (ii) clearly holds true: if $f\equiv 1$, then $\pss{f}{E(\lambda )}=\ba{p(\ba{\lambda })}=q(\lambda )$ for every $\lambda \in\T\setminus\{\ba{\lambda} _{1},\dots,\ba{\lambda} _{r}\}$, where $q({e^{i\theta }})=\sum_{n\in\Z}\ba{\hat{p}(n)}e^{in\theta }$. The main difficulty is to check that the series $\sum_{n\in\Z}\widehat{E}(n)$, or equivalently the series $\sum_{n\in\Z}\widehat{F}(n)$, is unconditionally convergent. Since $H^{2}(\D)$ does not contain a copy of $c_{0}$, it suffices to prove that for every $f\in H^{2}(\D)$, the series $\sum_{n\in\Z}|\pss{f}{\widehat{F}(n)}|$ is convergent. 
\par\smallskip
We are going to show that for every $f\in H^{2}(\D)$, the function $\phi _{f}$ defined on 
$\T\setminus\{\lambda _{1},\dots,\lambda _{r}\}$ by $$\phi _{f}(\lambda )=\pss{f}{F(\lambda )}=\ba{p(\lambda )}\,\displaystyle\sum_{l\in\Lambda }v_{l}(\lambda )\,f\bigl( \varphi ^{-1}_{l}(\lambda )\bigr)
$$ extends into a function of class $\mathcal{C}^{1}$ on $\T$. Bernstein's Theorem will then imply that the series
$\sum_{n\in\Z}|\pss{f}{\widehat{F}(n)}|$ is convergent.
\par\smallskip
 We have seen that the function $F$ is of class $\mathcal{C}^{\infty }$ on $\T\setminus\{\lambda _{1},\dots,\lambda _{r}\}$. Since, for every $l\in\Lambda $, $\sup\bigl\{|\varphi ^{-1}_{l}(\lambda )|;\ \lambda \in\Omega'_{l} \bigr\}\le \rho<1$, the quantity $\sup\bigl\{\bigl| \bigl| k_{\varphi ^{-1}_{l}(\lambda )}\bigr|\bigr|;\ \lambda \in\Omega' _{l}\bigr\}$ is finite for every $l\in\Lambda $. Now $p(\lambda _{j})=0$ for every $j\in\{1,\dots,r\}$, and since each function $v_{l}$, $l\in\Lambda$, is uniformly bounded on $\T$ it follows that $F$ can be extended into a continuous map on $\T$ by setting $F(\lambda _{j})=0$ for every $j\in\{1,\dots,r\}$. So $\phi _{f}$ is actually continuous on $\T$ for every $f\in H^{2}(\D)$. Let us now compute the derivative of $\phi _{f}$. Writing, for $0\le \theta < 2\pi $, $\lambda =e^{i\theta }$ and 
\begin{align*}
 \phi _{f}( e^{i\theta })=&\ba{\vphantom{\bigl( }p( e^{i\theta })}\,\sum_{l\in\Lambda }v_{l}( e^{i\theta })\,f\bigl(\varphi ^{-1}_{l}( e^{i\theta }) \bigr)
\intertext{we have}
\dfrac{d\phi _{f}}{d\theta }(e^{i\theta })=&-i\,e^{-i\theta }\,\ba{\vphantom{\bigl( }p'( e^{i\theta })}\,\sum_{l\in\Lambda }v_{l}( e^{i\theta })\,f\bigl(\varphi ^{-1}_{l}( e^{i\theta }) \bigr)\\&+
\ba{\vphantom{\bigl( }p( e^{i\theta })}\,\sum_{l\in\Lambda }ie^{i\theta }\,\dfrac{dv_{l}}{d\theta }(e^{i\theta })\,f\bigl(\varphi ^{-1}_{l}( e^{i\theta }) \bigr)\\
&+\ba{\vphantom{\bigl( }p( e^{i\theta })}\,\sum_{l\in\Lambda }v_{l}( e^{i\theta })\,(i\,e^{i\theta })^{2}\dfrac{1}{\varphi '\bigl( \varphi ^{-1}_{l}(e^{i\theta })\bigr)}\,f'\bigl( \varphi ^{-1}_{l}(e^{i\theta })\bigr)
\end{align*}
(the notation $\dfrac{d}{d\theta }$ is used for the derivative of a function on $\T$ with respect to the real variable $\theta $, while the sign ' is used for the complex derivative of a holomorphic function).
\par\medskip 
The same argument as above, using the facts that $\Lambda $ is finite, that the functions $v_{l}$ are of class $\mathcal{C}^{1}$ on $\T$ (and hence have uniformly bounded derivatives on $\T$), and that $p(\lambda _{j})=p'(\lambda _{j})=0$ for every $j\in\{1,\dots,r\}$, shows that the first two terms in this expression tend to $0$ as $\lambda =e^{i\theta }\in\T\setminus\{\lambda _{1},\ldots, \lambda _{r}\}$ tends to $\lambda _{j}$, $j\in\{1,\dots,r\}$. So it remains to deal with the last term. 
If $\lambda \in\T\setminus\{\lambda _{1},\ldots, \lambda _{r}\}$ tends to $\lambda _{j}$ for some $j\in\{\lambda _{1},\ldots, \lambda _{r}\}$, we can suppose without loss of generality that $\lambda $ belongs to $\Omega '_{l_{j}}$. The third term in the expression above is then equal to
$$\ba{p(\lambda )}\dfrac{1}{\varphi '\bigl( \varphi ^{-1}_{l_{j}}(\lambda )\bigr)}f'(\varphi ^{-1}_{l_{j}}(\lambda )), \quad \textrm{where } \lambda =e^{i\theta }.$$
As $\sup\bigl\{\bigl|f'\bigl( \varphi ^{-1}_{l_{j}}(\lambda )\bigr)\bigr|; \ \lambda\in\Omega' _{l_{j}} \bigr\}$ is finite,
it suffices to show that 
$$\dfrac{\ba{p(\lambda )}}{\varphi '\bigl( \varphi ^{-1}_{l_{j}}(\lambda )\bigr)}\to 0\quad \textrm{ as } \lambda \to\lambda _{j},\; \lambda \in\Omega '_{l_{j}}.$$
We have seen that for every $z\in D(z_{j}\varepsilon _{j})$, $\varphi '(z)=(z-z_{j})^{m_{j}}\sigma _{j}(z)$, where $\sigma _{j}$ is an holomorphic function which does not vanish on $D(z_{j},\varepsilon _{j})$. Hence the quantity
\[
\sup\Bigl\{\dfrac{|z-z_{j}|^{m_{j}}}{|\varphi '(z)|};\ z\in D(z_{j},\varepsilon _{j})\setminus\{z_{j}\}\Bigr\}
\]
is finite. So there exists a positive constant $C$ such that
\[
\sup\Bigl\{\dfrac{\bigl|\varphi ^{-1}_{l_{j}}(\lambda )-z_{j}\bigr|^{m_{j}}}{\bigl|\varphi '\bigl( \varphi ^{-1}_{l_{j}}(\lambda )\bigr)\bigr|};\ \lambda \in\Omega _{l_{j}}\Bigr\}\le C.
\]
We have
$$p(\lambda )=p\bigl( \varphi \bigl( \varphi ^{-1}_{l_{j}}(\lambda )\bigr)\bigr)=p\bigl( \lambda _{j}+\bigl( \varphi ^{-1}_{l_{j}}(\lambda )-z_{j}\bigr)^{d_{j}}\psi _{j}\bigl( \varphi ^{-1}_{l_{j}}(\lambda )\bigr)\bigr)$$ for every $\lambda \in\Omega _{l_{j}}$, and thus
$p(\lambda )=\bigl( \varphi ^{-1}_{l_{j}}(\lambda )-z_{j}\bigr)^{2d_{j}}q_{j}\bigl( \varphi ^{-1}_{l_{j}}(\lambda )\bigr)$ for every $\lambda \in\Omega _{l_{j}}$, where $q_{j}$ is an holomorphic function which is bounded on $\varphi ^{-1}_{l_{j}}(\Omega _{l_{j}})$. There exists hence a positive constant $C'$ such that 
\begin{align*}
 \biggl|\dfrac{p(\lambda )}{\varphi '\bigl( \varphi ^{-1}_{l_{j}}(\lambda )\bigr)}\biggr|&=
\biggl|\dfrac{\bigl( \varphi ^{-1}_{l_{j}}(\lambda )-z_{j}\bigr)^{2d_{j}}}{\bigl( \varphi ^{-1}_{l_{j}}(\lambda )-z_{j}\bigr)^{m_{j}}}\biggr|\,.\, \bigl|q_{j}\bigl( \varphi ^{-1}_{l_{j}}(\lambda )\bigr)\bigr|\,.\,
\biggl|\dfrac{\bigl( \varphi ^{-1}_{l_{j}}(\lambda )-z_{j}\bigr)^{m_{j}}}{\varphi '\bigl( \varphi ^{-1}_{l_{j}}(\lambda )\bigr)}\biggr|\\
&\le C'\,.\,\bigl|\varphi ^{-1}_{l_{j}}(\lambda )-z_{j}\bigr|^{2d_{j}-m_{j}}\qquad \textrm{for every } \lambda \in\Omega _{l_{j}}.
\end{align*}
The righthand bound
tends to $0$ as $\varphi ^{-1}_{l_{j}}(\lambda )$ tends to $z_{j}$ since $2d_{j}=2(m_{j}+1)>m_{j}$. We now claim that if $\lambda\in \Omega _{l_{j}} $ tends to $\lambda _{j}$, then $\varphi ^{-1}_{l_{j}}(\lambda )$ tends to $z_{j}$. Indeed, since $\Omega _{l_{j}}=U_{j,\,0}$, $\varphi ^{-1}_{l_{j}}(\lambda )=\varphi _{j,\,0}^{-1}(\lambda )=
\alpha _{j}^{-1}\bigl( \gamma _{j,\,0}(\lambda -\lambda _{j})\bigr)$. If $\lambda \to \lambda _{j}$, then 
$\gamma_{j,\,0}(\lambda -\lambda _{j})\to 0$. Now the map $\alpha _{j}:D(z_{j},\varepsilon _{j})\To D(0,\delta _{j})$ is a biholomorphism such that $\alpha _{j}(z_{j})=0$. Thus $\alpha _{j}^{-1}\bigl( \gamma_{j,\,0}(\lambda -\lambda _{j})\bigr)\to z_{j}$ as $\lambda \to\lambda _{j}$, $\lambda \in\Omega _{l_{j}}$, and this proves our claim.
\par\medskip 
So we have proved that if $\lambda \in\Omega _{l_{j}}$ tends to $\lambda _{j}$, then 
\[
\biggl|\dfrac{\ba{\vphantom{\bigl( }p(\lambda )}} {\varphi '\bigl( \varphi ^{-1}_{l_{j}}(\lambda )\bigr)}\biggr|=\biggl|\dfrac{p(\lambda )} {\varphi '\bigl( \varphi ^{-1}_{l_{j}}(\lambda )\bigr)}\biggr|\quad \textrm{tends to}\ 0.
\]
As explained above,  this shows that the third term in the expression of $\dfrac{d\phi _{f}}{d \theta }(e^{i\theta })$ given above tends to $0$ as $d\bigl( e^{i\theta } ,\{\lambda _{1},\dots,\lambda _{r}\}\bigr)$ tends to $ 0$ with $e^{i\theta }\in\T\setminus\{\lambda _{1},\ldots,\lambda _{r}\}$. So $\phi _{f}$ is a map of class $\mathcal{C}^{1}$ on $\T$, and this finishes the proof of Example \ref{Example2}. 
\end{proof}
\par\smallskip
\begin{example}\label{Example3}
Using the notation of Example \ref{Example1}, the operator $\alpha B^{2}$ 
is universal on $\lp{p}{\N}$ or $\co{\N}$ for any $\alpha $ with $|\alpha |>1$. 
\par\smallskip
This can be 
proved in several ways. One of these is to observe that $\alpha B^{2}$ 
admits two eigenvectorfields $E_{1}$ and $E_{2}$ which are analytic on 
$D(0,|\alpha |)$:
\[
E_{1}(\lambda )=\sum_{n\ge 0}\biggl(\dfrac{\lambda }{\alpha } 
\biggr)^{\!\!n}
e_{2n+1}\quad\textrm{and}\quad
E_{2}(\lambda )=\sum_{n\ge 0}\biggl(\dfrac{\lambda }{\alpha } 
\biggr)^{\!\!n}
e_{2n}\cdot 
\]
We have $\overline{\vphantom{'}\textrm{span}}\,\bigl[E_{1}(\lambda ),
E_{2}(\lambda );\ \lambda \in\T\bigr]=Z$. Using the generalization of 
Theorem \ref{Theo6} following from Theorem \ref{Theo1-4}, we obtain that 
$\alpha B^{2}$ is universal for ergodic systems. 
\par\smallskip 
If one is interested only in the universality of $\alpha B^{2}$ for 
invertible ergodic systems, one can use simply Theorem \ref{Theo5} and 
the following argument: let $\varphi $ be a 
function of class $\mathcal{C}^{\infty }$ on $\T$ such that $0\le \varphi \le 1$ and the support of $\varphi 
$ is a non-trivial closed sub-arc $\Gamma $ of $\T$. Consider the 
$\mathcal{C}^{\infty }$ eigenvectorfield of $\alpha B^{2}$ defined by
$$E(\lambda )=\varphi (\lambda )\,E_{1}(\lambda )+(1-\varphi (\lambda ))\,
E_{2}(\lambda ),\quad \lambda \in\T.$$ Let us show that 
$\overline{\vphantom{'}\textrm{span}}\,\bigl[E(\lambda );\ \lambda 
\in\T\bigr]=Z$: if $x^{*}$ is a functional such that $\pss{x^{*}}{E(\lambda 
)}=0$ for every $\lambda \in\T$, then 
$(1-\varphi (\lambda ))\,\pss{x^{*}}{E_{2}(\lambda )}=\pss{x^{*}}{E_{2}(\lambda )}=0$ for every $\lambda 
\in\T\setminus\Gamma  $. Hence by analyticity of $E_{2}$,
$\pss{x^{*}}{E_{2}(\lambda )}=0$ for every $\lambda \in\T$. It follows that 
$\varphi (\lambda )\,\pss{x^{*}}{E_{1}(\lambda )}=0$ for every $\lambda 
\in\Gamma $, and the same argument shows that $\pss{x^{*}}{E_{1}(\lambda 
)}=0$ for every $\lambda \in\T$. Since 
$\overline{\vphantom{'}\textrm{span}}\,\bigl[E_{1}(\lambda ),E_{2}(\lambda 
);\ \lambda 
\in\T\bigr]=Z$, $x^{*}=0$, and thus 
$\overline{\vphantom{'}\textrm{span}}\,\bigl[E(\lambda );\ \lambda 
\in\T\bigr]=Z$. Moreover, $\pss{e_{0}^{*}+e_{1}^{*}}{E(\lambda )}=1$ for 
every $\lambda \in\T$ (where $e_{i}^{*}$ is the functional on $\ell_{p}(\N)$ or $c_{0}(\N)$ mapping a vector $x$ of the space on its $i^{th}$ coordinate). Lastly, $E$ being of class $\mathcal{C}^{\infty }$ 
on $\T$, the series $$\sum_{n\in\Z}||\widehat{E}(n)||$$ is convergent. So 
Theorem \ref{Theo5} applies. If we consider $\alpha B^{2}$ as acting on 
$\lp{2}{\N}$, Example \ref{Example2} applies directly since $\alpha B^{2}$ 
is unitarily similar to $M_{\varphi }^{*}$ where $\varphi (z)=\overline{\alpha} 
z^{2}$,
$z\in\D$.
\end{example}
\par\smallskip 
Many other examples can be obtained along these lines (such as $\bigoplus
_{\ell_{p}}\alpha B$ on the infinite direct sum $\bigoplus_{\ell_{p}}\ell_{p}$, $1\le 
p<+\infty $, $|\alpha| >1$,\,\dots).
\par\smallskip 
\par\smallskip
One can observe that all the universal operators presented until now admit 
 eigenvectorfields which are analytic in a neighborhood of some points of $\T$, and in particular have a rather large spectrum. So one 
may naturally wonder whether this condition is necessary for $A$ to be 
universal. Our last example, which is rather unexpected, shows that it is 
not the case. It is to be found within the class of Kalish-type operator 
on $\LL{2}(\T)$ (see \cite{BM} for details).
\par\smallskip
\begin{example}\label{Example4} Let $A$ be the operator defined on $\LL{2}(\T)$
by setting for every $f\in\LL{2}(\T)$ and every $0\le\theta <2\pi $,
\[
Af(e^{i\theta })=e^{i\theta }f(e^{i\theta })-\int_{0}^{\theta 
}ie^{it}f(e^{it})dt.
\]
Then $A$ is universal for invertible ergodic systems.
\end{example}

\par\smallskip 
It is not difficult to check that for every $\lambda =e^{i\theta }\in\T$, 
$0\le\theta<2\pi  $, $\ker(A-\lambda )=\textrm{span}\bigl[E(\lambda ) 
\bigr]$ where $E(\lambda )=\chi _{(\lambda ,1)}$. Here
$\chi _{(\lambda ,1)}$ denotes the indicator function of the arc 
$\Gamma _{\lambda }=\bigl\{e^{i\tau };\ \theta <\tau <2\pi \bigr\}$. The 
eigenvectorfield $E$ is $1/2$-H\"{o}lderian on $\T$. Since the spectrum of 
$A$ coincides with $\T$, $A$ does not admit any eigenvectorfield  which is analytic in 
a neighborhood of some point of $\T$.
\par\smallskip 
\begin{proof}[Proof of Example \ref{Example4}] The most obvious idea is to try to apply Theorem 
 \ref{Theo5}. Setting 
 \[
y_{n}(e^{it})=\int_{0}^{2\pi }e^{-in\theta 
}E(e^{i\theta })(e^{it})\dfrac{d\theta }{2\pi }
\]
for every $n\in\Z$, we have
\[
y_{0}(e^{it})=\dfrac{t}{2\pi }\quad\textrm{and}\quad y_{n}(e^{it})
=\dfrac{1-e^{-int}}{2i\pi n}\quad \textrm{for every}\ 
n\in\Z\setminus\{0\}\cdot
\]
The series $\sum_{n\in\Z} y_{n}$ is obviously not unconditionally 
convergent in $\LL{2}(\T)$, and Theorem \ref{Theo5} cannot be applied this 
way. So we consider instead of $E$ the unimodular eigenvectorfield $F$ 
defined by $F(\lambda )=(1-\lambda )E(\lambda )$, and we set 
\[
z_{n}(e^{i\theta })=\int_{0}^{2\pi }e^{-in\theta }F(e^{i\theta })(e^{it })
\dfrac{d\theta }{2\pi }=
\int_{0}^{2\pi }\bigl(e^{-in\theta }-e^{-i(n-1)\theta } 
\bigr)E(e^{i\theta })(e^{it })\dfrac{d\theta }{2\pi }\cdot 
\]
Thus $z_{n}=y_{n}-y_{n-1}$ for every $n\in\Z$. We have for every 
$n\in\Z\setminus\{0,1\}$
\[
z_{n}(e^{it})=\dfrac{1}{2i\pi
}\biggl(\dfrac{e^{-i(n-1)t}}{n-1}-\dfrac{e^{-int}}{n}-\dfrac{1}{n(n-1)} 
\biggr)\cdot 
\]
Since the series $\sum_{|n|\ge 2}1/(n(n-1))$ is convergent, and the series
$\sum_{|n|\ge 2}e^{-int}/n$ is unconditionally convergent in $\LL{2}(\T)$, 
it follows that the series $\sum_{|n|\ge 2}z_{n}$ (and hence the series 
$\sum_{n\in\Z} z_{n}$) is unconditionally convergent in $\LL{2}(\T)$.
\par\smallskip 
Let us now check that the functions $z_{n}$, $n\in\Z$, span a dense 
subspace of $\LL{2}(\T)$. Suppose that $f\in\LL{2}(\T)$ is such that 
$\pss{f}{z_{n}}=0$ for every $n\in\Z$. Then 
\begin{align*}
\dfrac{\widehat{f}(n)}{n}&=\dfrac{\widehat{f}(n-1)}{n-1}+\widehat{f}
(0)\biggl( \dfrac{1}{n}-\dfrac{1}{n-1}\biggr)\quad \textrm{for every }n\not\in\{0,1\}.
\intertext{Summing these equalities for $n\ge 2$, we obtain}
\sum_{n\ge 2}\dfrac{\widehat{f}(n)}{n}&=\widehat{f}(1)+\sum_{n\ge 2}
\dfrac{\widehat{f}(n)}{n}-\widehat{f}(0), \textrm{ so that }
\widehat{f}(1)=\widehat{f}(0).
\intertext{As}
\widehat{f}(n)&=\dfrac{n}{n-1}\widehat{f}(n-1)-\dfrac{1}{n-1}\widehat{f}(0)
\end{align*}
 for every $n\ge 2 $,
 $\widehat{f}(2)=2\widehat{f}(1)-\widehat{f}(0)=\widehat{f}(0)$. By 
induction $\widehat{f}(n)=\widehat{f}(0)$ for every $n\ge 2$, so that 
$\widehat{f}(n)=0$ for every $n\ge 0$. So $$\widehat{f}(n-1)=\frac{n-1}{n}
\widehat{f}(n)\quad \textrm{ for every }
n\ge -1,$$  which implies that $\widehat{f}(-n)=n\widehat{f}(-1)$ for every $n\ge 
1$. Hence $f=0$. So $\overline{\vphantom{'}\textrm{span}}\,\bigl[z_{n};\ 
n\in\Z
\bigr]=\LL{2}(\T)$ (we have actually proved that 
$\overline{\vphantom{'}\textrm{span}}\,\bigl[z_{n};\ 
n\in\Z\setminus\{0,1\}
\bigr]=\LL{2}(\T)$
\!). 
\par\smallskip 
Lastly, observe that if we set $f_{0}(e^{i\theta })=e^{i\theta }$, 
then $\pss{f_{0}}{z_{n}}=0$ for every $n$ with $|n|\ge 2$, 
$\pss{f_{0}}{z_{1}}=-\pss{f_{0}}{z_{-1}}=-1/2i\pi $
and a simple computation shows that $\pss{f_{0}}{z_{0}}=0$. So 
$\pss{f_{0}}{F(e^{i\theta })}=\bigl(e^{i\theta }-e^{-i\theta } \bigr)/2i\pi $, and all the 
assumptions of Theorem \ref{Theo5} (or, directly, Theorem \ref{Theo1}) are 
satisfied. So $A$ is universal for invertible ergodic systems.
\end{proof}

\section{Miscellaneous results and comments}\label{Sec5}

We begin this section by investigating necessary conditions for an operator to be 
universal. We have already seen some such necessary conditions: a 
universal operator must be frequently hypercyclic, and topologically 
mixing. If it is universal for all ergodic systems, it cannot be 
invertible. Without any additional assumption, it seems difficult to say 
more. But looking at the examples of universal operators presented in 
Sections \ref{Sec3} and \ref{Sec4}, we observe that all of them admit 
continuous unimodular eigenvectorfields, and that every $\lambda 
\in\T$ is an eigenvalue. This is not completely a coincidence: if $A\in
\bb(Z)$ satisfies the assumptions of Theorem \ref{Theo1}, then 
$A$ admits a continuous unimodular eigenvectorfield defined as 
$$E(\lambda )=\sum_{n\in\Z}\lambda ^{n}A^{-n}z_{0}.$$ If $A$ satisfies the 
assumptions of Theorem \ref{Theo1bis} with $r=1$ for instance, then $A$ admits a continuous 
eigenvectorfield on $\overline{\D}$ defined as $E(\lambda 
)=\sum_{n\in\N}\lambda ^{n}A^{-n}z_{0}$. In both cases assumption (b) implies that 
all $\lambda \in\T$ except possibly finitely many are eigenvectors of $A$. 
Indeed, there exists a finite subset $F$ of $\Z$ and a \nz\ functional $z_{0}^{*}\in Z^{*}$ such that
$$\pss{z_{0}^{*}}{E(\lambda )}=\sum_{n\in F}\lambda ^{n}\pss{z_{0}^{*}}{A^{-n}z_{0}}.$$
The function $\pss{z_{0}^{*}}{E(\,.\,)}$ is a \nz\ trigonometric polynomial, so it has only finitely many zeroes on $\T$. This implies that $E(\lambda )$ is \nz\ for all $\lambda \in\T$ except possibly finitely many.
\par\smallskip
So it comes as a natural question to ask whether a universal operator 
necessarily admits some (or many) unimodular eigenvectors. It is possible 
to answer this question in the affirmative under two additional 
assumptions: first that the operator lives on a Hilbert space, and, second, 
that we add in the definition of the universality the requirement that all 
the measures $\nu $ on $Z$ involved in the definition have a moment of 
order $2$.
\begin{definition}\label{Definition10}
 A bounded operator $A$ on $Z$ is said to be $2$-universal for 
(invertible) ergodic systems if for every (invertible) ergodic dynamical 
system $(X,\bb,\mu ;T)$ there exists a Borel probability measure $\nu $ on 
$Z$ which is $A$-invariant, has full support, has a moment of order $2$
(i.e.\ $\int_{Z}||z||^{2}d\nu (z)<+\infty$), and is such that the two dynamical 
systems $(X,\bb,\mu ;T)$ and $(Z,\bb_{Z},\nu ;A)$ are isomorphic.
\end{definition}
We have seen in Section \ref{Sec2} that operators satisfying the 
assumptions of Theorems \ref{Theo1}, \ref{Theo1bis}, \ref{Theo1-3} or 
\ref{Theo1-4} are $2$-universal. Thus all the universal operators 
presented in Sections \ref{Sec3} and \ref{Sec4} above are $2$-universal. A 
first necessary condition for an operator on a Hilbert space to be 
$2$-universal is 
\begin{proposition}\label{Proposition11}
 Let $H$ be a complex separable infinite-dimensional Hilbert space, and 
let $A$ be a $2$-universal operator on $H$. Then $A$ admits a perfectly 
spanning (and even a $\mathcal{U}_{0}$-perfectly spanning) set of unimodular eigenvectors.
\end{proposition}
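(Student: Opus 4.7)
The plan is to reduce Proposition \ref{Proposition11} to the characterization of operators on a complex separable Hilbert space that admit an ergodic invariant measure with full support and a second-order moment, which was recalled in the Introduction (see \cite{BG1} and \cite[Ch.\,5]{BM}): such operators are precisely those whose unimodular eigenvectors form a perfectly spanning (and in fact $\mathcal{U}_{0}$-perfectly spanning, by the refinement of this characterization) set. Once this characterization is in hand, the statement follows almost immediately from the definition of 2-universality.

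To execute this, I would first fix any particular invertible ergodic measure-preserving transformation $T$ of a standard Lebesgue \prob\ space $(X,\bb,\mu )$; for instance, an irrational rotation of the circle, or any ergodic system one likes. By the 2-universality hypothesis applied to $(X,\bb,\mu ;T)$, there exists a Borel \prob\ \mea\ $\nu $ on $H$ which is $A$-\inv, has full support, admits a moment of order $2$ (i.e.\ $\int_{H}||z||^{2}d\nu (z)<+\infty $), and is such that the dynamical systems $(X,\bb,\mu ;T)$ and $(H,\bb_{H},\nu ;A)$ are isomorphic. Since isomorphism of dynamical systems preserves ergodicity and $T$ is ergodic, the transformation $A$ acting on $(H,\bb_{H},\nu )$ is ergodic. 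Thus $A$ admits an \inv\ \prob\ \mea\ with full support and second order moment with respect to which it is ergodic, and the cited characterization yields that the unimodular eigenvectors of $A$ are $\mathcal{U}_{0}$-perfectly spanning (and hence in particular perfectly spanning).

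There is essentially only one nontrivial step, namely the invocation of the characterization from \cite{BG1}; the rest is a direct unpacking of Definition \ref{Definition10}. The only real subtlety worth flagging is that one must appeal to the strengthened form of the characterization to conclude $\mathcal{U}_{0}$-perfect spanning rather than just perfect spanning; this strengthening is a Hilbert space phenomenon and rests on Gaussian-measure arguments, but once admitted, no further work is required on the operator-theoretic side.
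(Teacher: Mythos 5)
Your reduction to the characterization ``ergodic invariant measure with full support and second-order moment $\iff$ perfectly spanning'' is in spirit the same as the paper's, but the paper does not treat that characterization as a black box: it takes the measure $\nu$ produced by universality, builds the associated centered Gaussian measure $m$ with the same covariance, and then transports weak mixing (not just ergodicity) from $\nu$ to $m$ via the vanishing of the Ces\`aro means of squared correlations, so that the Gaussian result of \cite{BG2} applies. This is the only known route, and it requires that $\nu$ be \emph{weakly mixing}, not merely ergodic. That distinction matters for your chosen model: the irrational rotation is ergodic but has discrete spectrum, so it is not weakly mixing, and the resulting $\nu$ would not be weakly mixing either. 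To make this step airtight you should pick a weakly mixing $T$.

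The $\mathcal{U}_0$-perfect spanning part contains the genuine gap. There is no ``strengthened form of the characterization'' of the kind you invoke that turns an ergodic invariant measure directly into $\mathcal{U}_0$-perfect spanning. The relevant result, from \cite{BM2}, is that an operator which is \emph{strongly mixing} with respect to a Gaussian measure with full support has $\mathcal{U}_0$-perfectly spanning unimodular eigenvectors; and strong mixing is strictly stronger than weak mixing (which is in turn strictly stronger than ergodicity), precisely because $\mathcal{U}_0$-perfect spanning is strictly stronger than perfect spanning. To use it, one must (i) apply $2$-universality to a strongly mixing system $T$ (an irrational rotation will not do, since it is not even weakly mixing), so that $A$ is strongly mixing with respect to the resulting $\nu$; (ii) pass to the Gaussian measure $m$ with the same covariance and verify, via the transfer of correlations, that $A$ is also strongly mixing with respect to $m$; and then (iii) cite \cite{BM2}. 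Your statement that ``once admitted, no further work is required on the operator-theoretic side'' is where the proof fails: steps (i)--(iii) constitute exactly the operator-theoretic work the paper carries out, and none of it is present in your sketch.
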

\begin{proof}
 The proof is an easy consequence of some results of \cite{BG2} and \cite{BM2}
concerning the ergodic theory of linear dynamical systems (see also 
\cite{BM}). Since $A$ is $2$-universal, it is weakly mixing with respect 
to some probability measure $\nu $ with full support which has a moment of 
order $2$. Consider the centered Gaussian probability measure $m$ on $H$ whose 
covariance operator $S$ is given by
\[
\pss{Sx}{y}=\int_{H}\pss{x}{z}\,\overline{\pss{y}{z}}\,d\nu 
(z)\quad\textrm{for every}\ x,y\in H.
\]
Then
\[
\int_{H}\pss{x}{z}\,\overline{\pss{y}{z}}\,dm(z)=\pss{Sx}{y}=\int_{H}\pss{x
}{z}\,\overline{\pss{y}{z}}\,d\nu(z)
\]
for every $x,y\in H$. Hence $m$ is $A$-invariant and has full support. 
Moreover, since $A$ is weakly mixing with respect to $\nu $,
\[
\dfrac{1}{N}\sum_{n=0}^{N-1}\,\,\biggl|\int_{H}\pss{x}{A^{n}z}\,\overline{
\pss { y }{z } }\,d\nu (z)\biggr|^{2}=\dfrac{1}{N}\sum_{n=0}^{N-1}\,\,\biggl|\int_{H}\pss{x}{A^{n}z}\,\overline{
\pss { y }{z } }\,dm (z)\biggr|^{2}
\]
tends to $0$ as $N$ tends to infinity for every $x,y\in H$,
and this implies (see \cite{BG1} or \cite{BM} for details) that $A$ is 
weakly mixing with respect to $m$. Since it is proved in \cite{BG2} that 
any operator on a space of cotype $2$ which is weakly mixing  with respect 
to some Gaussian measure with full support has perfectly spanning 
unimodular eigenvectors, the first part of the result follows. The second part is proved in exactly the same way: $A$ is necessarily strongly mixing \wrt\ some probability measure on $Z$ with full support, and is hence
strongly mixing \wrt\ some Gaussian probability measure on $Z$ with full support.
It is proved in \cite{BM2} that this implies that the unimodular eigenvectors of $A$ are $\mathcal{U}_{0}$-perfectly spanning (i.e. for any Borel set $B\subseteq \T$ which is a set of extended uniqueneness,
$\overline{\vphantom{'}\textrm{span}}\,\bigl[\ker(T-\lambda );\, \lambda \in\T\setminus B\bigr]=H$. See \cite{BM2} for more about these questions).
\end{proof}
\par\smallskip
If $A$ is supposed to be $2$-universal for ergodic systems (and not only 
for invertible ones), we can moreover prove that the unimodular point 
spectrum of $A$ is a subset of $\T$ of full Lebesgue measure. These are 
the contents of Theorem \ref{Theo7}, which we now prove.
\par\smallskip 
\begin{proof}[Proof of Theorem \ref{Theo7}]
Let $A$ be a $2$-universal 
operator on $H$ for ergodic systems. Consider the dynamical system $T$ 
defined on $\bigl([0,1],\bb_{[0,1]},dx \bigr)$, where $dx$ is the Lebesgue 
measure on $[0,1]$, by $Tx=2x\mod 1$. Then $T$ is strongly mixing, and 
has the following property of decay of correlations: for any $f,g\in\LL
{2}\bigl([0,1] \bigr)$ and $n\ge 0$, define the $n$-th correlation between 
 $f$ and $g$ as 
\[
\mathcal{C}_{n}(f,g)=\int_{0}^{1}f(T^{n}x)\,\overline{\vphantom{'}g(x)}\,dx-
\biggl(\int_{0}^{1}f \biggr)\,.\,\overline{\biggl(\int_{0}^{1}g \biggr)}.
\]
Then we have for every $f\in\LL{2}\bigl([0,1] \bigr)$,
$g\in\mathcal{C}^{1}\bigl([0,1] \bigr)$, and $n\ge 0$
\[
|\,\mathcal{C}_{n}(f,g)|\le 2^{-n}\,\dfrac{||f||_{2}\,.\,||g'||_{\infty 
}}{\sqrt{3}}\cdot
\]
Thus the correlations decay exponentially fast provided one of the two 
functions $f$ and $g$ is sufficiently smooth.
\par\smallskip 
Since $A$ is $2$-universal for ergodic systems, there exists an 
$A$-invariant measure $\nu $ on $H$ with full support and with a moment of 
order $2$, for which there exists an isomorphism $\Phi $ between the two 
dynamical systems $\bigl([0,1],\bb_{[0,1]},dx;T \bigr)$ and $\bigl(H,
\bb_{H},\nu ;A\bigr)$. We denote by $\LL{2}_{0}([0,1])$ (resp. 
$\LL{2}_{0}(H,\bb_{H},\nu )$) the set of functions $f\in 
\LL{2}([0,1])$ (resp. $F\in \LL{2}(H,\bb_{H},\nu )$) such that 
$\int_{0}^{1}f(x)\,dx=0$ (resp. $\int_{H}F(z)\,d\nu (z)=0$). For every 
functions $F,G\in\LL{2}_{0}(H,\bb_{H},\nu )$ such that $G=g\circ\Phi 
^{-1}$ for some function $g\in\mathcal{C}^{1}\bigl([0,1] \bigr)
\cap\LL{2}_{0}\bigl([0,1] \bigr)$, there exists a positive constant $c(F,G)$ 
such that if we denote for every $n\ge 0$ by $C_{n}(F,G)$ the correlation
\[
C_{n}(F,G)=\int_{Z}F(A^{n}z)\,\overline{\vphantom{'}G(z)}\,d\nu (z),
\] 
then
$\bigl|C_{n}(F,G) \bigr|\le c(F,G)\,2^{-n}$. Since the measure $\nu $ has 
a moment of order $2$, one can consider its covariance operator $S$ on $H$ defined as:
\[
\pss{Sx}{y}=\int_{Z}\pss{x}{z}\overline{\vphantom{'}\pss{y}{z}}\,
d\nu (z) 
\]
for every $x,y\in H$. The operator $S$ is self-adjoint, positive, and of 
trace class. Since $\nu $ has full support, $S$ has dense range. Hence 
there exist an orthonormal basis $(e_{l})_{l\ge 1}$ of $H$ and a sequence
$(\sigma ^{2}_{l})_{l\ge 1}$ of positive numbers with $\sum_{l\ge 1} \sigma  _{l}^{2}<+\infty$ such that $Se_{l}
=2\sigma ^{2}_{l}e_{l}$ for every $l\ge 1$. 
Also it follows from the orthogonality of the vectors $e_{l}$ in $H$ that 
the functions $\pss{e_{l}}{\,.\,}$ are orthogonal in $\LL{2}(H,\mathcal{B}_{H},\nu )$.
Let $\mathcal{E}=\overline{\vphantom{'}\textrm{span}}\,\bigl[
\pss{e_{l}}{\,.\,};\ l\ge 1\bigr]$, where the closed linear span is taken in 
$\LL{2}(H,\mathcal{B}_{H},\nu )$: $\mathcal{E}$ is a closed subspace of 
$\LL{2}(H,\mathcal{B}_{H},\nu )$ which consists of all functions $F\in\mathcal{E}$ which can be written as a 
convergent series in $\LL{2}(H,\mathcal{B}_{H},\nu )$ of the form
\[
F=\sum_{l\ge 1} a_{l}\,\pss{e_{l}}{\,.\,}, \quad \textrm{where}\quad 
\sum_{l\ge 1}
|a_{l}|^{2}\,\sigma _{l}^{2}<+\infty .
\]
Remark that the function $\pss{x}{\,.\,}$ belongs to $\mathcal{E}$ for 
every $x\in H$. We denote by $\iota $ the injection operator $\iota :H\To \mathcal{E}$ defined by $\iota (x)=\pss{x}{\,.\,}$. If $U_{A}$ denotes the Koopman operator associated to 
$(H,\bb_{H},\nu ;A)$, then $U_{A}(\mathcal{E})\subseteq \mathcal{E}$.
Proceeding as in the proof of \cite[Th. 4.1]{BG2}, we apply the spectral 
decomposition theorem to $U_{A}$, which is an isometry on $\LL{2}
(H,\bb_{H},\nu )$: there exists a finite or countable family 
$(H_{i})_{i\in I}$ of Hilbert spaces, with either $H_{i}=H^{2}(\T)$ or
$H_{i}=\LL{2}(\T,\sigma _{i})$ for some probability measure on $\T$, and 
an invertible isometry $J:\bigoplus_{i\in I}H_{i}\longrightarrow \LL{2}
(H, \mathcal{B}_{H},\nu )$ such that $U_{A}J=JM$. Here $M$ acts on $\bigoplus_{i\in 
I}H_{i}$ as $M=\bigoplus_{i\in I}M_{i}$, where $M_{i}$ is the 
multiplication operator by $\lambda $ on $H_{i}$: $(M_{i}f_{i})(\lambda )=
\lambda f_{i}(\lambda )$ for every $f_{i}\in H_{i}$.
\par\smallskip
Let now $K:\bigoplus_{i\in I}H_{i}\longrightarrow H$ be the operator defined as $K=\iota ^{*}J$. 
For 
every $x\in H$, we have
\[
\pss{Sx}{x}=\int_{H}|\pss{x}{z}|^{2}\,d\nu (z)=||\iota (x)||^{2}=||K^{*}x||^{2}.
\]
It follows that $K^{*}$ is a Hilbert-Schmidt operator, and so there exists 
for every $i\in I $ a unimodular eigenvectorfield
$E_{i}\in \LL{2}(\T,\sigma _{i};H)$ such that $K^{*}x=\bigoplus_{i\in I}
\pss{x}{E_{i}(\,.\,)}$ (see \cite{VTC}, \cite{BG2} or \cite{BM} for more details).
We have thus for every $x,y\in H$,
\[
\pss{Sx}{y}=\pss{K^{*}x}{K^{*}y}=\sum_{i\in 
I}\int_{\T}\pss{x}{E_{i}(\lambda )}\,\overline{\vphantom{'}\pss{y}{E_{i}
(\lambda )}}\, d\sigma _{i}(\lambda ).
\]
Let now $G\in\LL{2}_{0}(H,\mathcal{B}_{H},\nu )$ be a function of the form $G=g\circ \Phi 
 ^{-1}$ with $g\in  \LL{2}_{0}\bigl([0,1] \bigr)\cap \mathcal{C}^{1}
 \bigl([0,1] \bigr)$, which is such that its orthogonal projection on 
$\mathcal{E}$, which we denote by $F$, is non-zero. Such a function $G$ 
does exist because $\bigl\{g\circ\Phi ^{-1};\ g\in \LL{2}_{0}\bigl([0,1] 
\bigr)\cap\mathcal{C}^{1}
 \bigl([0,1] \bigr)\bigr\}$ is dense in $\LL{2}(H,\mathcal{B}_{H},\nu )$. For every 
$n\ge 0$ we have
\[
\mathcal{C}_{n}(F,G)=\int_{Z}F(A^{n}z)\,\overline{\vphantom{'}G(z)}\,d\nu 
(z)=\pss{U_{A}^{n}F}{G}
=\pss{U_{A}^{n}F}{F}+\pss{U_{A}^{n}F}{G-F}.
\]
Since $U_{A}(\mathcal{E})\subseteq \mathcal{E}$ and $G-F$ is orthogonal to 
 $\mathcal{E}$, the second term vanishes and 
 \[
\mathcal{C}_{n}(F,G)=\mathcal{C}_{n}(F,F)=\int_{Z}F(A^{n}z)\,\overline{
\vphantom{'}F(z)}\,d\nu 
(z)\quad \textrm{for every}\ n\ge 0.
\]
Hence there exists a positive constant $C$ such that for every $n\ge 0$, 
$|\mathcal{C}_{n}(F,F)|\le C.2^{-n}$. Writing $F$ as $F=\sum_{l\ge 
1}a_{l}\,\pss{e_{l}}{\,.\,}$, where $\sum_{l\ge 1}|a_{l}|\,\sigma 
_{l}^{2}<+\infty $ and one at least of the coefficients $a_{l}$ is 
non-zero, we can write $\mathcal{C}_{n}(F,F)$ as
\begin{align*}
 \mathcal{C}_{n}(F,F)&=\sum_{k,l\ge 1}a_{k}\overline{\vphantom{t}a_{l}}\,
 \pss{SA^{*n}\,e_{k}}{e_{l}}\\&=\sum_{k,l\ge 
  1}a_{k}\overline{\vphantom{t}a_{l}}\,\sum_{i\in I}\int_{\T}
  \pss{A^{*n}e_{k}}{E_{i}(\lambda )}\,\overline{\vphantom{'}\pss{e_{l}}
  {E_{i}(\lambda )}}\,d\sigma _{i}(\lambda )\\
  &=\int_{\T}\,\lambda ^{n}\,\sum_{i\in I}\Bigl(
  \sum_{k,l\ge 
1}a_{k}\overline{\vphantom{t}a_{l}}\,\pss{e_{k}}{E_{i}(\lambda )}\,
\overline{\vphantom{'}\pss{e_{l}}{E_{i}(\lambda )}}\Bigr)\,d\sigma 
_{i}(\lambda )\\
&=\int_{\T}\,\,\lambda ^{n}\,\sum_{i\in I}\,\,\,\Bigl|\, 
\sum_{k\ge 1}
a_{k}\,\pss{e_{k}}{E_{i}(\lambda )}\,
\Bigr|^{2}\,d\sigma _{i}(\lambda ).
\end{align*}
All these computations make sense because
$$\int_{\T}\,\sum_{i\in I}\,\Bigl|\lambda ^{n}\, 
\sum_{k\ge 1}
a_{k}\,\pss{e_{k}}{E_{i}(\lambda )}
\Bigr|^{2}\,d\sigma _{i}(\lambda )=\int_{Z}|F(z)|^{2}d\nu (z)<+\infty.$$
Let us denote by $\sigma $ the positive finite measure 
$$\sigma  =\sum_{i\in I}\,\,\,\Bigl|\, 
\sum_{k\ge 1}
a_{k}\,\pss{e_{k}}{E_{i}(\,.\,)}\,
\Bigr|^{2}\sigma _{i}.$$ Then $|\widehat{\sigma }(n)|\le C.2^{-n}$ for 
every $n\ge 0$, and since $\sigma $ is a positive measure, 
$\widehat{\sigma }(-n)=\ba{\widehat{\sigma }(n)}$ for every $n\ge 0$, so that 
$|\widehat{\sigma }(n)|\le C.2^{-|n|}$ for every $n\in\Z$. Hence there 
exists a function $\varphi $ which is analytic in $\mathbb{A}_{1/2}=
\bigl\{\lambda \in\C;\ 1/2<|\lambda |<2\bigr\}$ such that $d\sigma =
\varphi \,d\lambda $, where $d\lambda $ is the normalized Lebesgue measure 
on $\T$. Since $\mathcal{C}_{0}(F,F)=\widehat{\sigma }(0)>0$ (recall that the function $F$ is \nz), the function 
$\varphi $ cannot be identically zero. If $E$ is a subset of $\T$ of 
positive Lebesgue measure, it is thus impossible that $E_{i}(\lambda )=0$ 
for every $\lambda \in E$ and every $i\in I$. So the unimodular point 
spectrum of $A$ has full Lebesgue measure in $\T$. This proves Theorem 
\ref{Theo7}.
\end{proof}
\par\smallskip 
The proof of Theorem \ref{Theo7} does not extend to operators which are 
$2$-universal for inver\-tible ergodic systems: the proof uses in a 
crucial way that the correlations $\mathcal{C}_{n}(f,g)$ of the system 
$x\mapsto 2x \mod 1$ on $[0,1]$ decay exponentially fast for \emph{all} 
$f\in\LL{2}
\bigl([0,1] \bigr)$ and sufficiently smooth $g\in\LL{2}\bigl([0,1] 
\bigr)$. This system is not invertible, and this seems to be in the nature 
of things that for an invertible system, the correlations decay 
exponentially fast only for sufficiently smooth functions $f$ \emph{and} 
$g$ (see \cite{B} for more on these questions). So the following question remains open:
\begin{question}\label{Question12}
If $A$ is a $2$-universal operator for invertible ergodic systems on a 
Hilbert space, is it true that the unimodular point spectrum of $A$ has 
full Lebesgue measure?
\end{question}

We do not know any example of a $2$-universal operator whose unimodular point spectrum is not the whole unit circle:
\begin{question}\label{Question12bis}
If $A$ is a $2$-universal operator for (invertible) ergodic systems on a 
Hilbert space, is it true that the unimodular point spectrum of $A$ is equal to $\T$?
\end{question}

If an affirmative answer to this question could be obtained, it would 
be a first step towards a characterization of symbols $\varphi \in H^{\infty }(\D)$ such 
that $M^{*}_{\varphi} $ acting on $H^{2}(\D)$ is $2$-universal for invertible 
ergodic systems.
\begin{question}\label{Question13}
 Let $\varphi \in H^{\infty }(\D)$. Is it true that $M^{*}_{\varphi} \in
 \bb(H^{2}(\D))$ is $2$-universal for invertible ergodic systems if and 
only if $\T\subseteq\varphi (\D)$?
\end{question}
It would also be interesting to obtain a characterization of adjoints of 
multipliers on $H^{2}(\D)$ which are universal for ergodic systems.
\begin{question}\label{Question14}
 Let $\varphi \in H^{\infty }(\D)$. If $\overline{\D}\subseteq \varphi 
(\D)$ (where $\overline{\D}$ denotes the closure of the unit disk $\D$), 
is $M^{*}_{\varphi} $ universal for ergodic systems? Is it true that 
$M^{*}_{\varphi}$ is $2$-universal for ergodic systems if and only if 
$\overline{\D}\subseteq\varphi (\D)$?
\end{question}
Of course things would be simpler if we knew that a universal operator is 
necessarily $2$-universal, but this does not seem easy to prove. It is not 
even known whether a frequently hypercyclic operator on a reflexive space 
admits an invariant measure with full support having a moment of order 
$2$, although it is known that it admits invariant measures with full 
support (see \cite{GM}).
\begin{question}\label{Question15}
 Does there exist a universal operator which is not $2$-universal?
\end{question}
This brings us back to questions about the existence of unimodular 
eigenvectors for universal operators.
\begin{question}\label{Question16}
 Does there exist universal (or $2$-universal) operators admitting no 
unimodular eigenvalue? What about universal operators on a Hilbert space?
\end{question}
The second half of this question seems hard, again because we do not know 
whether a frequently hypercyclic operator on a Hilbert space  
necessarily has some unimodular eigenvalue. The first half of Question 
\ref{Question16} may be more tractable, and a potential example would be 
the Kalish-type operator $A$ of Example \ref{Example4} acting on the space
$\mathcal{C}_{0}([0,2\pi])$ of continuous functions on $[0,2\pi]$ vanishing at $0$. It 
is proved in \cite{BG2} that although this operator has no unimodular 
eigenvalue, it admits a Gaussian invariant measure with full support with 
respect to which it is strongly mixing. We have seen that $A$ acting on 
$\LL{2}(\T)$ is universal for invertible ergodic systems, so one may 
naturally wonder about the following:
\begin{question}\label{Question17}
 Let $A$ be the bounded operator on $\mathcal{C}_{0}([0,2\pi])$ defined by 
setting, for every $f\in\mathcal{C}_{0}([0,2\pi])$ and every ${\theta }\in [0,2\pi]$,
\[
Af({\theta })=e^{i\theta }f({\theta 
})-\int_{0}^{\theta}ie^{it}f(e^{it})\,dt.
\]
Is $A$ a universal (or $2$-universal) operator for invertible ergodic 
systems on $\mathcal{C}_{0}([0,2\pi])$?
\end{question}
A positive answer to Question \ref{Question17} cannot be obtained via an 
application of Theorem \ref{Theo1}, or any variant of it, since we have 
seen that assumption (c) of Theorem \ref{Theo1} for instance implies that 
$A$ admits a continuous unimodular eigenvectorfield $E$ which is such that
$\overline{\vphantom{'}\textrm{span}}\bigl[E(\lambda );\ \lambda \in\T 
\bigr]=Z$.
So we finish the paper with this last question:

\begin{question}
Does there exist any universal operator for invertible ergodic systems which does not satisfy the assumptions of Theorem \ref{Theo1}? 
\end{question}

\end{document}